\documentclass[10pt]{article}
\usepackage[utf8]{inputenc}
\usepackage[T1]{fontenc}
\usepackage{amsfonts}
\usepackage{amsmath,latexsym}
\usepackage{amssymb,amsthm}
\usepackage{graphicx}
\usepackage[english]{babel}
\usepackage{hyperref}
\usepackage{color}

\usepackage{xspace}

\usepackage{mathrsfs}

\usepackage{tikz}
\usetikzlibrary{decorations.pathreplacing}
\usepackage{caption}

\usepackage{cases}

\usepackage{geometry}
\usepackage[french]{layout}
\geometry{hmargin=3.1cm,tmargin=3.1cm,bmargin=3.1cm,headheight=15pt,headsep = 25pt}

\providecommand{\keywords}[1]{\textbf{Keywords:} #1}

\newtheorem{theorem}{Theorem}[section]

\newtheorem{lemma}[theorem]{Lemma}
\newtheorem{proposition}[theorem]{Proposition}
\newtheorem{example}[theorem]{Example}

\theoremstyle{definition}
\newtheorem{definition}[theorem]{Definition}
\newtheorem{remark}[theorem]{Remark}

\newtheoremstyle{question}  
{5pt}                        
{10pt}                        
{}                           
{}                           
{\bf \color{red}}               
{:}                          
{.5em}                       
{}                           
 
\theoremstyle{question}

\newcommand\No{n\textsuperscript{o}}


\def\ds{\displaystyle}

\newcommand{\tr}[1]{{#1}^{*}}

\newcommand{\field}[1]{\ensuremath{\mathbb{#1}}}
\newcommand{\C}{\field{C}\xspace}
\newcommand{\R}{\field{R}\xspace}

\newcommand{\N}{\field{N}\xspace}

\newcommand{\ens}[1]{ \left\{#1\right\} }
\newcommand{\lin}[1]{\mathcal{L}(#1)}

\newcommand{\ps}[3]{ {\left\langle #1 , #2 \right\rangle}_{#3} }
\newcommand\abs[1]{\left|#1\right|}
\def\norm#1{\left\|#1\right\|}

\newcommand\diag{\mathrm{diag} \xspace}

\newcommand{\Topt}[1]{T_{\mathrm{inf}}\left(\Lambda,#1,Q\right)}
\newcommand{\Toptlib}[1]{T_{\mathrm{inf}}\left(#1\right)}

\newcommand\Id{\mathrm{Id}}
\newcommand\caract{\chi}
\newcommand\caractt{\tilde{\chi}}

\renewcommand\epsilon{\varepsilon}

\newcommand\pt[1]{\frac{\partial #1}{\partial t}}
\newcommand\px[1]{\frac{\partial #1}{\partial x}}
\newcommand\pxi[1]{\frac{\partial #1}{\partial \xi}}
\newcommand\partials[1]{\frac{\partial #1}{\partial s}}

\newcommand{\dom}[1]{ D(#1) }
\newcommand\ddt{\frac{d}{dt}}
\newcommand\dds{\frac{d}{ds}}
\newcommand\clos[1]{\overline{#1}}

\newcommand{\rank}{ \mathrm{rank} \,}
\newcommand\range{\mathrm{Im} \,}

\newcommand\ssin{s^{\mathrm{in}}}
\newcommand\ssout{s^{\mathrm{out}}}
\newcommand\ssint{\tilde{s}^{\mathrm{in}}}
\newcommand\ssoutt{\tilde{s}^{\mathrm{out}}}

\newcommand\aew{\text{ a.e. }}

\usepackage{dsfont}
\newcommand\indic{\mathds{1}}

\usepackage[shortlabels]{enumitem}

\newcommand\TCN{T_{\mathrm{opt}}}

\title{Minimal time for the exact controllability of one-dimensional first-order linear hyperbolic systems by one-sided boundary controls}

\begin{document}

\author{
Long Hu\thanks{School of Mathematics, Shandong University, Jinan, Shandong 250100, China.  E-mail: \texttt{hul@sdu.edu.cn}.},  
Guillaume Olive\thanks{Institute of Mathematics, Jagiellonian University, Lojasiewicza 6,
30-348 Krakow, Poland. E-mail: \texttt{math.golive@gmail.com} or \texttt{guillaume.olive@uj.edu.pl}}.
}

\maketitle

\begin{abstract}
In this article we study the minimal time for the exact controllability of one-dimensional first-order linear hyperbolic systems when all the controls are acting on the same side of the boundary.
We establish an explicit and easy-to-compute formula for this time with respect to all the coupling parameters of the system.
The proof relies on the introduction of a canonical $UL$--decomposition and the compactness-uniqueness method.
\end{abstract}

\keywords{Hyperbolic systems, Boundary controllability, Minimal control time, $UL$--decomposi\-tion, Compactness-uniqueness method.}


\section{Introduction and main result}

\indent

In this article we are interested in the controllability properties of the following class of one-dimensional first-order linear hyperbolic systems, which appears for instance in linearized Saint-Venant equations and many other physical models of balance laws (see e.g. \cite[Chapter 1]{BC16}):
\begin{equation}\label{syst}
\left\{\begin{array}{l}
\ds \pt{y}(t,x)=\Lambda(x) \px{y}(t,x)+M(x) y(t,x), \\
y_+(t,0)=Qy_-(t,0), \quad y_-(t,1)=u(t),  \\
y(0,x)=y^0(x),
\end{array}\right.
\quad t \in (0,+\infty), x \in (0,1).
\end{equation}

In \eqref{syst}, $y(t,\cdot)$ is the state at time $t$, $y^0$ is the initial data and $u(t)$ is the control at time $t$.
We denote by $n\geq 2$ the total number of equations of the system.
The matrix $\Lambda \in C^{0,1}([0,1])^{n \times n}$ is assumed to be diagonal:
\begin{equation}\label{Lambda diag}
\Lambda =\diag(\lambda_1,\ldots,\lambda_n),
\end{equation}
with $p \geq 1$ negative eigenvalues and $m \geq 1$ positive eigenvalues (so that $p+m=n$)  such that:
\begin{equation}\label{hyp speeds}
\lambda_1(x) \leq \cdots \leq \lambda_p(x) <0<\lambda_{p+1}(x) \leq \cdots \leq \lambda_{p+m}(x), \quad \forall x \in [0,1],
\end{equation}
and we assume that, in case two eigenvalues agree somewhere, they agree everywhere:
\begin{equation}\label{hyp Rus78 eg}
\forall i,j \in \ens{1,\ldots,n}, \quad i \neq j, \quad \left(\exists x \in [0,1], \quad \lambda_i(x)=\lambda_j(x)\right) \Longrightarrow \left(\lambda_i(x)=\lambda_j(x), \quad \forall x \in [0,1]\right).
\end{equation}
The assumption \eqref{hyp Rus78 eg} will be commented below.
All along this paper, for a vector (or vector-valued function) $y \in \R^n$ we use the notation
$$y=\begin{pmatrix} y_+ \\ y_- \end{pmatrix},$$
where $y_+ \in \R^p$ and $y_- \in \R^m$.
Finally, the matrix $M \in L^{\infty}(0,1)^{n \times n}$ couples the equations of the system inside the domain and the constant matrix $Q \in \R^{p \times m}$ couples the equations of the system on the boundary $x=0$.

Taking formally the inner product in $\R^n$ (denoted by $\cdot$) of \eqref{syst} with a smooth function $\varphi$ and integrating in time and space, we are lead to the following definition of solution (see e.g. \cite[pp. 250-251]{BC16}):
\begin{definition}\label{def weak sol}
Let $y^0 \in L^2(0,1)^n$ and $u \in L^2(0,+\infty)^m$.
We say that a function $y$ is a (weak) solution to \eqref{syst} if $y \in C^0([0,+\infty);L^2(0,1)^n)$ and, for every $T>0$,
\begin{multline}\label{weak sol}
\int_0^1 y(T,x) \cdot \varphi(T,x) \, dx
-\int_0^1 y^0(x) \cdot \varphi(0,x) \, dx
\\
=\int_0^T \int_0^1 y(t,x) \cdot \left(\pt{\varphi}(t,x)-\Lambda(x)\px{\varphi}(t,x)+\left(-\px{\Lambda}(x)+M(x)^*\right)\varphi(t,x)\right)\, dx dt
\\
+\int_0^T u(t) \cdot \Lambda_{-}(1)\varphi_-(t,1) \, dt,
\end{multline}
for every $\varphi \in C^1([0,T]\times[0,1])^n$ such that $\varphi_+(\cdot,1)=0$ and $\varphi_-(\cdot,0)=R^*\varphi_+(\cdot,0)$, where $R \in \R^{p \times m}$ is defined by
\begin{equation}\label{def R}
R=-\Lambda_{+}(0)Q\Lambda_{-}(0)^{-1},
\end{equation}
and $\Lambda_{+}=\diag(\lambda_1,\ldots,\lambda_p)$ and $\Lambda_{-}=\diag(\lambda_{p+1}, \ldots, \lambda_n)$.
\end{definition}

We recall that $\Lambda \in C^{0,1}([0,1])^{n \times n}=W^{1,\infty}(0,1)^{n \times n}$ so that $\px{\Lambda}$ exists and belongs to $L^{\infty}(0,1)^{n \times n}$.
We can establish that system \eqref{syst} is well-posed, that is, for every $y^0 \in L^2(0,1)^n$ and $u \in L^2(0,+\infty)^m$, there exists a unique solution $y \in C^0([0,+\infty);L^2(0,1)^n)$ to \eqref{syst} and this solution depends continuously on $y^0$ and $u$ on compact time intervals (see e.g. Section \ref{sect abst} below).
The regularity of the solution to \eqref{syst} allows us to consider control problems in $L^2(0,1)^n$.
We say that the system \eqref{syst} is:
\begin{itemize}
\item
exactly controllable in time $T$ if, for every $y^0,y^1 \in L^2(0,1)^n$, there exists $u \in L^2(0,+\infty)^m$ such that the corresponding solution $y \in C^0([0,+\infty);L^2(0,1)^n)$ to system \eqref{syst} satisfies $y(T)=y^1$.
\item
null controllable in time $T$ if the previous property holds at least for $y^1=0$.
\item
approximately controllable in time $T$ if, for every $\epsilon>0$ and every $y^0,y^1 \in L^2(0,1)^n$, there exists $u \in L^2(0,+\infty)^m$ such that the corresponding solution $y \in C^0([0,+\infty);L^2(0,1)^n)$ to system \eqref{syst} satisfies $\norm{y(T)-y^1}_{L^2(0,1)^n} \leq \epsilon$.
\item
approximately null controllable in time $T$ if the previous property holds at least for $y^1=0$.
\end{itemize}
Clearly, exact controllability implies all the other controllability notions and approximate null controllability is implied by all the other controllability notions.
On the other hand, for the system \eqref{syst}, null controllability in a time $T$ implies exact controllability in the same time, if we assume that $\rank Q=p$ (which is a necessary condition for the exact controllability of \eqref{syst} to hold in some time, as we shall see below).
This is easily seen by using a similar argument to that for systems which are reversible in time (even though it is not the case for \eqref{syst}).
Indeed, take any $\clos{Q} \in \R^{m \times p}$ such that $Q\clos{Q}=\Id_{\R^{p \times p}}$ and consider the system without control
$$
\left\{\begin{array}{l}
\ds \pt{\clos{y}}(t,x)=\Lambda(x) \px{\clos{y}}(t,x)+M(x) \clos{y}(t,x), \\
\clos{y}_-(t,0)=\clos{Q} \clos{y}_+(t,0), \quad \clos{y}_+(t,1)=0,  \\
\clos{y}(T,x)=y^1(x),
\end{array}\right.
\quad t \in (0,T), x \in (0,1),
$$
and then the controlled system
$$
\left\{\begin{array}{l}
\ds \pt{\tilde{y}}(t,x)=\Lambda(x) \px{\tilde{y}}(t,x)+M(x) \tilde{y}(t,x), \\
\tilde{y}_+(t,0)=Q\tilde{y}_-(t,0), \quad \tilde{y}_-(t,1)=\tilde{u}(t),  \\
\tilde{y}(0,x)=y^0(x)-\clos{y}(0,x), \quad \tilde{y}(T,x)=0,
\end{array}\right.
\quad t \in (0,T), x \in (0,1).
$$
Taking $u(t)=\clos{y}_-(t,1)+\tilde{u}(t)$ we see by uniqueness that $y=\clos{y}+\tilde{y}$ (in particular, $y(T)=y^1$).

For any $(\Lambda,M,Q)$ that satisfies the above standing assumptions, we denote by $\Topt{M} \in [0,+\infty]$ the minimal time for the exact controllability of \eqref{syst}, that is
\begin{equation}\label{def Topt}
\Topt{M}=
\inf\ens{T>0, \quad \eqref{syst} \mbox{ is exactly controllable in time } T}.
\end{equation}
The time $\Topt{M}$ is named ``minimal time'' according to the current literature, despite it is not always a minimal element of the set.
We keep this naming here, but we use the notation with the ``inf'' to avoid eventual confusions.
Since exact controllability in time $T_1$ clearly implies exact controllability in time $T_2$ for every $T_2 \geq T_1$, the time $\Topt{M} \in [0,+\infty]$ is also the unique time that satisfies the following two properties:
\begin{itemize}
\item
If $T>\Topt{M}$, then \eqref{syst} is exactly controllable in time $T$.
\item
If $T<\Topt{M}$, then \eqref{syst} is not exactly controllable in time $T$.
\end{itemize}

The goal of the present article is precisely to explicitly characterize $\Topt{M}$ in terms of $\Lambda$, $M$ and $Q$.
To the best of our knowledge, finding the minimal time for the controllability of one-dimensional first-order linear hyperbolic systems is a problem that dates back at least to the celebrated survey \cite{Rus78}.
In this article, the author started by introducing two basic times, one for which we always have null controllability after this time, whatever $M$ and $Q$ are, and another one for which in general (i.e. for some $M$ and $Q$) we do not have null controllability before this other time.
The author then tried to sharpen these preliminary results by looking more closely at the boundary coupling term $Q$.
He naturally started his study with the case of no internal coupling term for the adjoint system, i.e. $M=\px{\Lambda}$, but even in this simplified version he did not succeed to obtain the minimal time of null controllability and he left this as an open problem: ``This raises the question, unresolved at the moment, concerning the identification of a ``critical time'' $T_c$ such that observability holds if $T \geq T_c$ and does not hold if $T<T_c$. Such a critical time $T_c$ can readily be shown to exist but no satisfactory characterization of it is available at this writing''.
This problem was completely solved few years later in \cite{Wec82}.
There, for any diagonal $M$, the author gave an explicit expression of this critical time $T_c$ in terms of some indices related to $Q$.
Some exact controllability results for non diagonal $M$ were also obtained in \cite{Rus78}, by assuming in addition that $\rank Q=p$ and using some perturbation arguments, but in these results $M$ has to be either small, either such that the corresponding system is approximately controllable.

Following the works of \cite{Rus78} and \cite{Wec82}, we see that this left open in particular one natural question, which is the characterization of the minimal time for the null or exact controllability of systems with general internal couplings $M$ (which are not necessarily diagonal, small, etc.).
This is obviously a non trivial problem since the equations now become coupled inside the domain as well.
Moreover, the problem is in fact not only technical since, for instance for the null controllability property ($\rank Q<p$), the time $T_c$ found in \cite{Wec82} is not, in general, the minimal time of control when $M$ is not anymore diagonal.
This is implicitly illustrated by a simple $2 \times 2$ example in \cite{Rus78} (see Remark \ref{rem NC pas stab} below).

%
%

This problem was recently investigated in \cite{CN18} using another method: the so-called backstepping method.
Thanks to this technique it is in particular established there that the system remains null controllable in some time (that we will prove below is in fact $T_c$) for internal couplings $M$ of some particular form.
As expected by the counterexample of \cite{Rus78} that we have just mentioned, this was done under some assumptions on $Q$.
Some exact controllability results were also obtained there under these same assumptions and by requiring in addition that $\rank Q=p$.

The purpose of the present paper is to completely characterize the minimal time for the exact controllability of \eqref{syst}, whatever the boundary coupling $Q$ is (thus, generalizing some results of \cite{CN18}) and whatever the internal coupling $M$ is (thus, generalizing the results of \cite{Wec82}).
In particular, we will see that the time of \cite{Wec82} that characterizes the null controllability for diagonal $M$ in fact is also the minimal time for the exact controllability and for general $M$.
As a by-product we will also see that our way to compute this time is more efficient than the procedure introduced in \cite{Wec82}.
Our proof is a development the original ideas of \cite{Rus78}, combined with some results of \cite{DO18} and \cite{NRL86}, and by introducing an accurate factorization of $Q$ similar to the one of \cite{DJM06}.

Finally, we would like to conclude this introductory part by mentioning that there are not a lot of other works in the literature devoted to a characterization of the minimal time of control for this class of systems.
It seems that the attention was mainly directed towards the controllability of quasilinear versions of such systems afterwards, see for instance the book \cite{Li10}, the article \cite{Hu15} and the references therein.
It would be very interesting to see what can be done for such systems regarding the optimality of the control time.

Before going further and precisely stating the main result of this paper, we need to introduce some notations and concepts.
We start with the characteristics associated with system \eqref{syst}.
For every $i \in \ens{1,\ldots,n}$, every $t \geq 0$ and $x \in [0,1]$ fixed, we introduce the characteristic $\caract_i(\cdot;t,x) \in C^1\left(\left[\ssin_i(t,x),\ssout_i(t,x)\right]\right)$ passing through $(t,x)$, that is the solution to the ordinary differential equation:

\begin{equation}\label{caract}
\left\{\begin{array}{l}
\ds \dds\caract_i(s;t,x)=-\lambda_i\left(\caract_i(s;t,x)\right), \quad s \in \left[\ssin_i(t,x),\ssout_i(t,x)\right],\\
\caract_i(t;t,x)=x,
\end{array}\right.
\end{equation}
where $\ssin_i(t,x), \ssout_i(t,x) \in \R$ (with $\ssin_i(t,x)<t<\ssout_i(t,x)$) are the enter and exit parameters of the domain $[0,1]$, that is the unique respective solutions to

\begin{equation}\label{def ssin}
\left\{\begin{array}{lll}
\caract_i(\ssin_i(t,x);t,x)=0, & \caract_i(\ssout_i(t,x);t,x)=1, & \text{ if } i \in \ens{1,\ldots,p}, \\
\caract_i(\ssin_i(t,x);t,x)=1, & \caract_i(\ssout_i(t,x);t,x)=0, & \text{ if } i \in \ens{p+1,\ldots,n}.
\end{array}\right.
\end{equation}
Their existence and uniqueness are guaranteed by the assumption \eqref{hyp speeds}.
We then introduce

$$T_i(\Lambda)=
\left\{\begin{array}{ll}
\ssout_i(0,0) & \text{ if } i \in \ens{1,\ldots,p}, \\
\ssout_i(0,1) & \text{ if } i \in \ens{p+1,\ldots,n}.
\end{array}\right.
$$
Since the speeds do not depend on time, the exact value of $T_i(\Lambda)$ can actually be obtained by integrating over $[0,1]$ the differential equation satisfied by the inverse function $\xi \longmapsto \caract_i^{-1}(\xi;t,x)$:
\begin{equation}\label{comp Ti}
T_i(\Lambda)=
\left\{\begin{array}{ll}
\ds -\int_0^1 \frac{1}{\lambda_i(\xi)} \, d\xi & \text{ if } i \in \ens{1,\ldots,p}, \\
\ds \int_0^1 \frac{1}{\lambda_i(\xi)} \, d\xi & \text{ if } i \in \ens{p+1,\ldots,n}.
\end{array}\right.
\end{equation}
For the rest of this article it is important to keep in mind that the assumption \eqref{hyp speeds} implies the following order relation between the $T_i(\Lambda)$:
\begin{equation}\label{order times}
\left\{\begin{array}{l}
T_1(\Lambda) \leq \ldots \leq T_p(\Lambda), \\
T_{p+m}(\Lambda) \leq \ldots \leq T_{p+1}(\Lambda).
\end{array}\right.
\end{equation}


It is nowadays known that the combination of the two largest times
$$T_p(\Lambda)+T_{p+1}(\Lambda)$$
yields a time for which the null (resp. exact) controllability of \eqref{syst} holds (resp. if $\rank Q=p$).
This was proved for instance in \cite[Theorem 3.2]{Rus78} with a slightly different boundary condition at $x=1$ or in \cite[Theorem 3.2]{Li10} using a constructive method, moreover for quasilinear systems.
It is then not difficult to see that $T_p(\Lambda)+T_{p+1}(\Lambda)$ is the sharpest time for the null (resp. exact) controllability of \eqref{syst} which is uniform with respect to all possible choices of $M$ and $Q$ (resp. if $\rank Q=p$).

In \cite{Rus78}, the author then tried to improve the time $T_p(\Lambda)+T_{p+1}(\Lambda)$ according to the properties of $Q$.
Considering first the case $M=\px{\Lambda}$, he introduced in \cite[Propositions 3.3 and 3.4]{Rus78} two times $T_0,T_1>0$ for which the approximate null controllability fails for $T<T_0$ and the null controllability holds for $T \geq T_1$.
However, he observed that in general these two times do not agree and he left the characterization of the minimal time as an open problem.

On the other hand, assuming that $\rank Q=p$, the author deduced some exact controllability results as immediate consequences of the results for the null controllability.
Using then some perturbation arguments, it is proved in \cite[Theorem 3.7]{Rus78} that the system remains exactly controllable in the same time $T_1$ for non diagonal $M$ but the author has to assume that either $M$ is small (in which case the result is in fact not surprising since the exact controllability is a property that is stable by small bounded perturbations, see e.g. \cite[Theorem 4.1]{DR77}), either $M$ is such that the corresponding system is approximately controllable (which is in general not easy to check).

In the case of diagonal $M$, an explicit expression of the minimal time for the null controllability of \eqref{syst} was found in \cite{Wec82}, solving then the previously open problem raised in \cite{Rus78} (in particular, it is shown in \cite[Section 4]{Wec82} that none of the time $T_0$ or $T_1$ of \cite{Rus78} were the minimal time of control).
To precisely state the important result of \cite{Wec82}, we need to introduce some notations.
First of all, let $C_0 \in \R^{m \times p}$ be the matrix defined by
$$C_0=-\Lambda_-(0)^{-1}Q^*\Lambda_+(0)\Sigma,$$
where $\Sigma \in \R^{p \times p}$ is the permutation matrix whose $(i,j)$ entry is equal to $1$ if $i+j=p+1$ and $0$ otherwise (note that $\Sigma^*=\Sigma$ and $\Sigma^2=\Id_{\R^{p \times p}}$).
The introduction of the matrix $\Sigma$ is needed here because the positive speeds are ordered differently in \cite{Wec82}.
For every $\ell \in \ens{0,\ldots,m}$, let us denote by $E^-_\ell \in \R^{m \times m}$ the diagonal matrix whose $(i,i)$ entries are equal to $0$ for every $i \in \ens{1,\ldots,\ell}$ and $1$ otherwise (with the convention that $E^-_0=\Id_{\R^{m \times m}}$).
On the other hand, for every $k \in \ens{1,\ldots,p}$, let $E^+_k \in \R^{p \times p}$ be the diagonal matrix whose $(i,i)$ entries are equal to $1$ for every $i \in \ens{1,\ldots,k}$ and $0$ otherwise.
For every $k \in \ens{1,\ldots,p}$, let then $\ell(k) \in \ens{1,\ldots,m}$ be the unique index such that
\begin{equation}\label{def ellk}
\ker C_0E^+_k=\ker E^-_1C_0E^+_k=\ldots=\ker E^-_{\ell(k)-1}C_0E^+_k\subsetneq \ker E^-_{\ell(k)}C_0E^+_k,
\end{equation}
if it exists (i.e. $C_0E^+_k \neq 0$) and $\ell(k)=\infty$ otherwise.
Finally, $T_c>0$ is the time defined by
\begin{equation}\label{Time Weck}
T_c=\max_{k \in \ens{1,\ldots,p}}\left(T_{p-k+1}(\Lambda)+T_{p+\ell(k)}(\Lambda),T_{p+1}(\Lambda)\right),
\end{equation}
with the convention $T_{p+\infty}(\Lambda)=0$.
It is then proved in \cite[Theorems 1 and 2]{Wec82} that the system \eqref{syst} with diagonal $M$ is null controllable in time $T$ if, and only if, $T \geq T_c$ (let us warn the reader that the naming of the controllability notions in \cite[Definition 1]{Wec82} is different than ours and the current literature).

More recently, some results for the null and exact controllability of \eqref{syst} with non diagonal $M$ have been obtained in \cite{CN18}.
To be more precise, let us introduce the following condition:
\begin{equation}\label{cond CN18}
\text{the $i \times i$ matrix formed from the last $i$ rows and the last $i$ columns of $Q$ is invertible.}
\end{equation}
Then, using the so-called backstepping method, it was proved in \cite[Theorem 2]{CN18} that for every $Q$ such that \eqref{cond CN18} holds for every $i \in \ens{1,\ldots,p}$ and every $M$ of the form $\gamma C$, with $C \in L^{\infty}(0,1)^{n \times n}$ and $\gamma \in \R$ outside some discrete set (depending on $\Lambda, Q$ and $C$ though), the system \eqref{syst} is exactly controllable in time $\TCN$, where
\begin{equation}\label{time CN18}
\TCN=\max_{i \in \ens{1,\ldots,p}} (T_i(\Lambda)+T_{m+i}(\Lambda),T_{p+1}(\Lambda)).
\end{equation}
A similar result is proved for the null controllability in \cite[Theorem 1]{CN18}.
It is also shown in \cite[Theorem 3]{CN18} that the assumption on the particular form $M=\gamma C$ can be dropped if we look for exact (or null) controllability in times $T>\TCN$, but it is done under obviously too restrictive assumptions ($m=2$, $\Lambda$ constant, $M$ analytic in a neighborhood of $x=0$, etc.).

Finally, let us also mention the result \cite[Theorem 1.1]{Hu15} where it is proved, by developing the constructive approach of \cite[Theorem 3.2]{Li10}, that a quasilinear version of \eqref{syst} with $M=0$ is (locally) exactly controllable in time $T$ for every $T>\max\ens{T_{m+1}(\Lambda)+T_p(\Lambda),T_{p+1}(\Lambda)}$, if the condition \eqref{cond CN18} holds for $i=p$ (we point out that this is stronger than just assuming that $\rank Q=p$ when $m>p$).

In this article we will obtain the minimal time for the exact controllability for any fixed $\Lambda, Q$ and $M$, without assuming anything more than $\rank Q=p$.
As already mentioned before, we use a different approach than in the article \cite{CN18} and we go back to the original perturbation idea of the first paper \cite{Rus78}.

To deal with general $Q$ and state our main result we need to introduce the concept of canonical form for full row rank matrices (a related notion can be found in \cite[Definition 2]{DJM06}):

\begin{definition}\label{def canon}
We say that a matrix $Q^0 \in \R^{p \times m}$ is in canonical form if there exist distinct column indices $c_1(Q^0), \ldots, c_p(Q^0) \in \ens{1,\ldots,m}$ such that:
\begin{equation}\label{def ci}
\forall i \in \ens{1,\ldots,p}, \quad
\left\{\begin{array}{l}
q^0_{i,c_i(Q^0)} \neq 0, \\
q^0_{i,j}=0, \quad \forall j>c_i(Q^0), \quad j \not\in \ens{c_{i+1}(Q^0),\ldots,c_p(Q^0)}, \\
q^0_{i,j}=0, \quad \forall j<c_i(Q^0).
\end{array}\right.
\end{equation}
\end{definition}

\begin{example}\label{ex Qzero}
Consider the following matrices
$$
Q_1^0=
\begin{pmatrix}
0 & \fbox{$1$} & 4 & -1 \\
0 & 0 & \fbox{$2$} & 3 \\
0 & 0 & 0 & \fbox{$1$}
\end{pmatrix},
\qquad
Q_2^0=
\begin{pmatrix}
0 & 0 & \fbox{$4$} \\
\fbox{$1$} & 2 & 0 \\
0 & \fbox{$1$} & 0
\end{pmatrix},
\qquad
Q_3^0=
\begin{pmatrix}
1 & 4 & -1 & 0\\
 0 & 2 & 3 & 0\\
 0 & 0 & 1 & 1
\end{pmatrix}.
$$
The matrices $Q_1^0$ and $Q_2^0$ are both in canonical form, with $c_3(Q_1^0)=4$, $c_2(Q_1^0)=3$, $c_1(Q_1^0)=2$ and $c_3(Q_2^0)=2$, $c_2(Q_2^0)=1$, $c_1(Q_2^0)=3$.
However, $Q_3^0$ is not in canonical form because there is no $c_3(Q_3^0)$ that simultaneously satisfies the second and third conditions of \eqref{def ci}.
\end{example}

\begin{remark}\label{rem zero dessous}
If $Q^0 \in \R^{p\times m}$ is in canonical form, then necessarily:
\begin{enumerate}[(i)]
\item
The indices $c_1(Q^0),\ldots,c_p(Q^0)$ are unique.

\item
$q^0_{i,j}=0$ for every $i \in \ens{1,\ldots,p}$ and $j \not\in \ens{c_1(Q^0),\ldots,c_p(Q^0)}$.

\item
$\rank Q^0=p$.

\item
We have
\begin{equation}\label{cond proof direct}
q^0_{k,c_i(Q^0)}=0,\quad \forall k>i, \quad \forall i \in \ens{1,\ldots,p}.
\end{equation}
\end{enumerate}
The first point is clear since $c_i(Q^0)$ is the column index of the unique non-zero entry of the $i$-th row of $Q^0$ that is not in the columns with indices $c_{i+1}(Q^0),\ldots,c_p(Q^0)$.
The second point immediately follows from the two last conditions in \eqref{def ci}.
The third point is also clear by considering a linear combination of the only $p$ non-zero columns of $Q^0$ and looking first at its last row, then at its last but one row, etc.
For the last point, first note that for $i=p$, \eqref{cond proof direct} is clear since there is no condition ($k \in \ens{1,\ldots,p}$).
For $i=p-1$, we have to check that $q^0_{p,c_{p-1}(Q^0)}=0$.
Since $c_{p-1}(Q^0) \neq c_p(Q^0)$ we have two possibilities, either $c_{p-1}(Q^0)<c_p(Q^0)$ so that the equality follows from the last condition in \eqref{def ci}, either $c_{p-1}(Q^0)>c_p(Q^0)$ so that the equality follows from the second condition in \eqref{def ci}.
Repeating the reasoning for $i=p-2, p-3$, etc. eventually leads to \eqref{cond proof direct}.
\end{remark}

Next, we present a result that comes from the Gaussian elimination and that we will call in this article ``canonical $UL$--decomposition'' ($U$ for upper and $L$ for lower, see also Remark \ref{rem optimality} below for this naming):

\begin{proposition}\label{Gauss elim}
Let $Q \in \R^{p \times m}$ with $\rank Q=p$.
Then, there exists a unique $Q^0 \in \R^{p \times m}$ such that the following two properties hold:
\begin{enumerate}[(i)]
\item
There exists $L \in \R^{m \times m}$ such that $QL=Q^0$ with $L$ lower triangular ($\ell_{ij}=0$ if $i<j$) and with only ones on its diagonal ($\ell_{ii}=1$ for every $i$).
\item
$Q^0$ is in canonical form.
\end{enumerate}
We call $Q^0$ the canonical form of $Q$.
\end{proposition}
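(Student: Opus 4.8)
The plan is to prove existence by an explicit greedy/Gaussian elimination procedure run from the right, and uniqueness by exploiting the rigidity of the canonical-form conditions together with the constraint that the transformation matrix $L$ is lower triangular with unit diagonal.

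\textbf{Existence.} I would construct $Q^0$ column by column, working from column $m$ down to column $1$, while simultaneously building $L$. Equivalently, right-multiplying $Q$ by a lower triangular unit-diagonal $L$ amounts to performing column operations in which a column may only be modified by adding multiples of columns strictly to its \emph{left} (since $(QL)_{\cdot,j}=\sum_{k\ge j}\ell_{kj}Q_{\cdot,k}$, but writing it the other way: column $j$ of $QL$ is column $j$ of $Q$ plus a combination of columns $j+1,\dots,m$ of $Q$ — wait, one must be careful with the convention). Let me instead organize it by \emph{rows}, bottom-up, which matches the structure of \eqref{def ci}. First pick $c_p(Q^0)$ to be the largest column index $j$ with $q_{p,j}\neq 0$ (it exists since $\rank Q=p$ forces row $p$ to be nonzero). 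Then, using column operations that add multiples of columns with index $>c_p$ to columns with index $\le c_p$ — these correspond exactly to a lower triangular unit-diagonal $L$ — clear all entries $q_{p,j}$ with $j<c_p(Q^0)$. Note entries of row $p$ with $j>c_p$ are already zero by maximality, and are not touched. Having fixed row $p$, move to row $p-1$: pick $c_{p-1}(Q^0)$ to be the largest $j\notin\{c_p(Q^0)\}$ with (the current) $q_{p-1,j}\neq 0$; such a column exists because otherwise row $p-1$ would be supported only on column $c_p$, contradicting $\rank=p$ once we check row $p$ is independent — actually one must argue linear independence survives. Then clear entries $q_{p-1,j}=0$ for $j<c_{p-1}$ and for $c_{p-1}<j$, $j\notin\{c_p\}$, again only using columns to the right, so that row $p$ (already in final form, supported on $c_p$ only among "active" columns to the right of anything we touch) is not disturbed — this is where one uses \eqref{cond proof direct}-type bookkeeping: clearing $q_{p-1,j}$ for $j<c_{p-1}$ adds a multiple of column $c_{p-1}$, which has a zero in row $p$ (needs proof: $q_{p,c_{p-1}}=0$), so row $p$ is untouched. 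Continue upward to row $1$. Throughout, the column operations used are precisely of the form "add a multiple of a later column to an earlier column", i.e. the product of the elementary matrices is lower triangular with unit diagonal, so the resulting $L$ has the required form, and the final $Q^0$ satisfies \eqref{def ci} by construction.

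\textbf{Uniqueness.} Suppose $QL_1=Q_1^0$ and $QL_2=Q_2^0$ are two such decompositions. Then $Q_1^0 L_2^{-1}L_1 = Q_2^0$ — wait, $Q_1^0 L_1^{-1}L_2 = Q_2^0$, and $L:=L_1^{-1}L_2$ is again lower triangular with unit diagonal (the set of such matrices is a group). So it suffices to show: if $Q^0$ is in canonical form and $Q^0 L$ is also in canonical form with $L$ lower triangular unit-diagonal, then $L=\Id$ and hence $Q^0 L = Q^0$. First, I claim $Q^0$ and $Q^0 L$ have the same pivot columns $c_i$. Since $L$ is lower triangular, column $j$ of $Q^0 L$ is a combination of columns $j, j+1, \dots, m$ of $Q^0$ with coefficient $1$ on column $j$ itself. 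Reading Remark \ref{rem zero dessous}(iv), the pivot structure means that column $c_p(Q^0)$ of $Q^0$ is the only nonzero column among columns $\ge c_p(Q^0)$ in row $p$, etc.; a downward induction on rows shows that the canonical-form pattern of $Q^0 L$ forces the pivot positions to coincide and then forces the relevant entries of $L$ to vanish. Concretely, looking at the bottom row and the right-most pivot: comparing $(Q^0 L)_{p,\cdot}$ with the canonical pattern pins down $c_p$ and the corresponding column of $L$; then induct. This should be a finite, essentially combinatorial argument. An alternative, possibly cleaner route to uniqueness: show directly that the pivot columns $c_i(Q^0)$ of the canonical form are an invariant of $Q$ — e.g. characterize $c_p(Q)$ as $\max\{j : \text{the last row of any }QL\text{ is nonzero in column }j\text{ for all admissible }L\}$, which by the lower-triangular structure equals $\max\{j: q_{p,j}\neq 0\}$; and more generally characterize the pivots via the ranks of trailing submatrices — then the nonzero entries of $Q^0$ in the pivot columns are likewise determined, because once the pivot columns are fixed the lower-triangular unit-diagonal $L$ acting on those columns is forced to be the identity on the relevant block.

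\textbf{Main obstacle.} The delicate point is the bookkeeping in the existence step — verifying that when we clear entries in row $i$ using later columns, we never disturb the already-finalized rows $i+1,\dots,p$, which relies on a statement like \eqref{cond proof direct} (that $q_{k,c_i}=0$ for $k>i$) being an \emph{invariant} maintained at each stage, not just at the end; and dually, in the uniqueness step, cleanly isolating why a lower-triangular unit-diagonal $L$ preserving canonical form must be the identity. Both are finite linear-algebra arguments, but the notation-heavy interplay between "columns to the right" and "rows below" is where care is needed; I would likely prove a small lemma that the trailing-submatrix ranks $\rank\big(Q[\{i,\dots,p\},\{c_i,\dots,m\}]\big)$ and the positions of the pivots are unchanged under right multiplication by lower-triangular unit-diagonal matrices, and deduce both existence (as a construction) and uniqueness (as rigidity) from it.
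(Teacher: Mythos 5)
Your existence step is essentially the paper's own argument: bottom-up Gaussian elimination on the rows, using only column operations that add a multiple of a later column (the current pivot column $c_i$) to an earlier one, and observing that the already-finalized rows $i+1,\dots,p$ are undisturbed because they vanish in column $c_i$. The worry you flag about maintaining this invariant is real but easily discharged exactly as you sketch, and the rank argument guaranteeing the next pivot exists is also fine. This matches the paper.

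The uniqueness step, however, contains a genuine gap. You reduce to the claim that if $Q^0$ and $Q^0L$ are both in canonical form with $L$ lower unitriangular, then $L=\Id$, and you re-emphasize this as a ``main obstacle.'' That claim is false, and the paper warns of it in the very sentence following the proposition: because of possible zero columns of $Q$, the matrix $L$ is in general not unique. For instance, take $p=1$, $m=3$,
\[
Q^0=\begin{pmatrix}0 & 1 & 0\end{pmatrix},\qquad L=\begin{pmatrix}1&0&0\\ 0&1&0\\ a&b&1\end{pmatrix};
\]
then $Q^0$ is in canonical form (with $c_1=2$), and $Q^0L=Q^0$ is too, for every $a,b$, yet $L\neq\Id$ unless $a=b=0$. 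What you can and must prove is only $Q^0L=Q^0$, i.e.\ uniqueness of the \emph{canonical form}, not of $L$. The paper's induction on $j=m,m-1,\dots,1$ does exactly this: from $Q^0_j=\widetilde{Q}^0_j+\sum_{i>j}\ell'_{i,j}\widetilde{Q}^0_i$ it either discards the term because $\widetilde{Q}^0_i=0$ (leaving $\ell'_{i,j}$ unconstrained), or, when column $i$ is a pivot column with pivot row $i_m$, reads the equality in row $i_m$ to force $\ell'_{i,j}=0$. It never, and cannot, conclude $L'=\Id$. Your alternative sketch (pivot positions as invariants via trailing-submatrix ranks, with the qualifier that $L$ is forced to be the identity ``on the relevant block'') is the correct way to think about it, but as written your primary plan would dead-end the moment you tried to pin down entries of $L$ multiplying zero columns of $Q^0$.
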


We mention that, because of possible zero columns of $Q$, the matrix $L$ is in general not unique.
The proof of Proposition \ref{Gauss elim} is given in Appendix \ref{appendix}.
With this proposition, we can extend the definition of the $c_i$ indices in Definition \ref{def canon} to any full row rank matrix:

\begin{definition}\label{def generale ci}
Let $Q \in \R^{p \times m}$ with $\rank Q=p$.
We define $c_1(Q),\ldots,c_p(Q) \in \ens{1,\ldots,m}$ by
$$c_i(Q)=c_i(Q^0),$$
where $Q^0$ is the canonical form of $Q$ provided by Proposition \ref{Gauss elim}.
\end{definition}

\begin{example}\label{ex Q}
We illustrate how the find the decomposition of Proposition \ref{Gauss elim} in practice.
Consider
$$
Q_1=
\begin{pmatrix}
4 & 6 & 3 & -1 \\
8 & -1 & 5 & 3 \\
2 & -1 & 1 & 1
\end{pmatrix},
\qquad
Q_2=
\begin{pmatrix}
4 & -4 & 4 \\
5 & 2 & 0 \\
2 & 1 & 0
\end{pmatrix}.
$$
Let us deal with $Q_1$ first.
We look at the last row, we take the last nonzero entry as pivot.
We remove the entries to the left on the same row by doing the column substitutions $C_3 \leftarrow  C_3 -C_4$, $C_2 \leftarrow C_2+C_4$ and $C_1 \leftarrow C_1 -2 C_4$ so that
$$
Q_1L_1=
Q_1
\begin{pmatrix}
1 & 0 & 0 & 0 \\
0 & 1 & 0 & 0 \\
0 & 0 & 1 & 0 \\
-2 & 1 & -1 & 1
\end{pmatrix}
=
\begin{pmatrix}
6 & 5 & 4 & -1 \\
2 & 2 & 2 & 3 \\
0& 0 & 0 & 1
\end{pmatrix}
.
$$
We now move up one row and take as new pivot the last nonzero entry that is not in $C_4$.
We remove the entries to the left on the same row by doing the column substitutions $C_2 \leftarrow  C_2 -C_3$ and $C_1 \leftarrow C_1-C_3$ so that
$$
Q_1L_1L_2
=
Q_1L_1
\begin{pmatrix}
1 & 0 & 0 & 0 \\
0 & 1 & 0 & 0 \\
-1 & -1 & 1 & 0 \\
0 & 0 & 0 & 1
\end{pmatrix}
=
\begin{pmatrix}
2 & 1 & 4 & -1 \\
0 & 0 & 2 & 3 \\
0& 0 & 0 & 1
\end{pmatrix}
.
$$
Finally, a last substitution shows that $Q_1$ becomes $Q_1^0$ of Example \ref{ex Qzero}, namely:
$$
Q_1L=
Q_1L_1L_2
\begin{pmatrix}
1 & 0 & 0 & 0 \\
-2 & 1 & 0 & 0 \\
0 & 0 & 1 & 0 \\
0 & 0 & 0 & 1
\end{pmatrix}
=
\begin{pmatrix}
0 & 1 & 4 & -1 \\
0 & 0 & 2 & 3 \\
0& 0 & 0 & 1
\end{pmatrix}
=Q_1^0.
$$
Similarly, it can be checked the canonical form of $Q_2$ is in fact $Q_2^0$ of Example \ref{ex Qzero}.
\end{example}

\begin{remark}
Where we want to put entries to zero in Example \ref{ex Q} in fact depends on the way the times are ordered \eqref{order times}.
This will be more clear during the proof of Theorem \ref{thm unp syst} below.
We mention this point to highlight the fact that the definition of the canonical form is linked to this ordering.
\end{remark}

After such a long but necessary preparation we can now clearly state the main result of this paper:

\begin{theorem}\label{main thm}
Let $\Lambda \in C^{0,1}([0,1])^{n \times n}$ satisfy \eqref{Lambda diag}, \eqref{hyp speeds} and \eqref{hyp Rus78 eg}, $M \in L^{\infty}(0,1)^{n \times n}$ and $Q \in \R^{p\times m}$ be fixed.
We have:
\begin{enumerate}[(i)]
\item\label{conclu 1 main thm}
$\Topt{M}<+\infty$ if, and only if, $\rank Q=p$.

\item\label{conclu 2 main thm}
If $\rank Q=p$, then
\begin{equation}\label{thm Topt}
\Topt{M}=\max_{i \in \ens{1,\ldots,p}} (T_{p+1}(\Lambda), T_i(\Lambda)+T_{p+c_i(Q)}(\Lambda)),
\end{equation}
where $c_1(Q),\ldots,c_p(Q) \in \ens{1,\ldots,m}$ are defined in Definition \ref{def generale ci}.
\end{enumerate}
\end{theorem}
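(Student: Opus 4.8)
The plan is to reduce the general problem to the diagonal case of \cite{Wec82} (for the lower bound, i.e.\ the ``non-controllability'' direction) and to the uniform result $T_p(\Lambda)+T_{p+1}(\Lambda)$ combined with a compactness-uniqueness argument (for the upper bound), using the canonical $UL$-decomposition of Proposition \ref{Gauss elim} to exhibit the correct structure of $Q$ in both steps. First I would observe that conclusion \eqref{conclu 1 main thm} is essentially classical: if $\rank Q<p$ then a component of $y_+(\cdot,0)$ is never influenced by the control in finite time (the boundary condition $y_+(t,0)=Qy_-(t,0)$ forces $y_+(t,0)$ to lie in $\range Q$, which is a proper subspace, and this obstruction propagates), while if $\rank Q=p$ the finiteness of $\Topt{M}$ follows from the known bound $\Topt{M}\le T_p(\Lambda)+T_{p+1}(\Lambda)<+\infty$ recalled in the introduction. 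The substance is therefore conclusion \eqref{conclu 2 main thm}, the explicit value \eqref{thm Topt}, which I would prove by establishing the two matching inequalities.

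For the \emph{lower bound} $\Topt{M}\ge \max_i(T_{p+1}(\Lambda),T_i(\Lambda)+T_{p+c_i(Q)}(\Lambda))$, the first step is to reduce to $M$ diagonal. A change of the control variable $u$ only changes $M$ by conjugation and lower-order terms, and one shows (this is where the results of \cite{DO18}/\cite{NRL86} enter) that the minimal control time does not depend on $M$ at all — more precisely, that $\Topt{M}=\Topt{M'}$ for all admissible $M,M'$, so in particular one may take $M=\px{\Lambda}$, i.e.\ the diagonal case. Granting this, I would invoke \cite{Wec82}: with $M$ diagonal the null-controllability time is exactly $T_c$ of \eqref{Time Weck}, and since $\rank Q=p$ null controllability is equivalent to exact controllability in the same time (as shown in the introduction), $\Topt{M}=T_c$. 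The remaining task is then \emph{purely algebraic}: show that Weck's quantity $T_c$ built from the indices $\ell(k)$ via \eqref{def ellk} coincides with $\max_i(T_{p+1}(\Lambda),T_i(\Lambda)+T_{p+c_i(Q)}(\Lambda))$ built from the canonical-form indices $c_i(Q)$ of Definition \ref{def generale ci}. I expect this identification to be carried out by relating the kernels $\ker E^-_\ell C_0 E^+_k$ to the staircase pattern of zeros in the canonical form $Q^0$; concretely, $\ell(k)$ should equal $m-c_{p-k+1}(Q)+1$ (up to the permutation $\Sigma$ reindexing the positive speeds), after which the order relations \eqref{order times} make the two maxima agree term by term. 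This step also yields the claimed efficiency improvement over \cite{Wec82}.

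For the \emph{upper bound} $\Topt{M}\le T:=\max_i(T_{p+1}(\Lambda),T_i(\Lambda)+T_{p+c_i(Q)}(\Lambda))$, the plan is the perturbation/compactness-uniqueness scheme in the spirit of \cite{Rus78}. First, using the decomposition $Q^0=QL$ with $L$ lower triangular unipotent, I would make the corresponding triangular change of unknowns/controls in the components of $y_-$; because $L$ is lower triangular this transformation is compatible with the characteristic structure at $x=1$ and replaces $Q$ by its canonical form $Q^0$ without affecting $\Topt{}$, so we may assume $Q=Q^0$. For $Q^0$ in canonical form the staircase structure decouples the system into a cascade along the characteristics: the $i$-th equation from the positive block is controlled directly through $c_i(Q)$ and the influence travels along characteristic $i$ in time $T_i(\Lambda)$ and then back along characteristic $p+c_i(Q)$ in time $T_{p+c_i(Q)}(\Lambda)$, which is exactly the term appearing in $T$; the term $T_{p+1}(\Lambda)$ accounts for the slowest positive characteristic having to cross the domain at least once. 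One first proves exact controllability for $M$ diagonal at any $T>T$ by this direct characteristic analysis (equivalently, one may again quote \cite{Wec82} and the algebraic identity above), and then removes the diagonality assumption: writing the general $M$ as a bounded perturbation of its diagonal part, the exact controllability of the reference ($M$-diagonal) system at time $T$ is equivalent to an observability inequality for the adjoint, and a compactness-uniqueness argument upgrades the weakened observability one obtains after perturbation to the full one, provided one can prove the relevant unique-continuation property for the adjoint system — namely that a solution of the adjoint equation which is unobserved on $(0,1)$ over a time window of length $>T$ must vanish. Finally, letting $T\downarrow \max_i(\cdots)$ gives $\Topt{M}\le\max_i(\cdots)$, and together with the lower bound this proves \eqref{thm Topt}.

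\textbf{Main obstacle.}

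\emph{The hard part} will be twofold. Algebraically, the careful proof that Weck's indices $\ell(k)$ from the kernel chain \eqref{def ellk} match the canonical-form column indices $c_i(Q)$ — including correctly handling the permutation $\Sigma$, the case $\ell(k)=\infty$ versus $c_i(Q)$ undefined/zero-column phenomena, and checking that the two maxima agree and not merely that each term is dominated — requires bookkeeping that must be done with real care. Analytically, the genuine difficulty is the unique-continuation step inside the compactness-uniqueness argument: showing that the operator appearing as the limit in the compactness argument has trivial kernel, i.e.\ that no nonzero adjoint state can be invisible from the observation side over a time strictly larger than $\max_i(\cdots)$, when $M$ is a completely general $L^\infty$ matrix (no analyticity, no smallness). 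This is precisely the point at which the earlier approaches of \cite{Rus78} and \cite{CN18} needed extra hypotheses, and overcoming it in full generality — presumably by exploiting the finite speed of propagation together with the cascade structure inherited from the canonical form of $Q$, reducing the unique continuation to a triangular system of ODEs along characteristics — is the crux of the proof.
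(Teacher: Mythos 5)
Your overall architecture is close to the paper's: handle the case $M=0$ first (you propose quoting \cite{Wec82} plus an algebraic identity between $T_c$ and the formula; the paper instead reproves this case directly as Theorem \ref{thm unp syst}, and proves the identity with $T_c$ separately in Appendix \ref{app time Weck}), and then transfer to general $M$ by a compactness--uniqueness argument in the spirit of \cite{Rus78}, using \cite{DO18} and \cite{NRL86}. Both choices for the $M=0$ step are legitimate, and the paper says so explicitly. However, you have two genuine gaps, one structural and one of emphasis, plus a small algebraic error.

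First, the change of variables you propose at $x=1$ does not work. If you set $\tilde y_-=L^{-1}y_-$, the transport operator for the minus block becomes $L^{-1}\Lambda_-L\,\partial_x$, and $L$ (a full unipotent lower-triangular matrix) does not commute with the diagonal $\Lambda_-$ unless its speeds are all equal. Lower triangularity does not help: the transformed system is no longer in diagonal hyperbolic form, so the characteristic structure is not preserved and one cannot simply ``replace $Q$ by $Q^0$''. What the paper actually does with $QL=Q^0$ is purely algebraic: it uses the factorization to rewrite, row by row, the boundary relation $z_-(t,0)=R^*z_+(t,0)$ as a triangular cascade in the indices $c_i(Q)$, so that $z_i(t,0)$ can be recursively expressed in terms of $z_k(t,0)$ with $k<i$ and of $z_{p+j}(t,0)$ with $j\ge c_i(Q)$ (equations \eqref{BC equ bis}--\eqref{equ zi}); no state/control transformation on $y_-$ is performed. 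This is not fatal to your plan since you also offer the alternative of quoting \cite{Wec82}, but the $L$-transformation as stated is incorrect.

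Second, you misidentify the analytic crux of the perturbation step. The compactness--uniqueness device used here (Theorem \ref{cor DO18}, from \cite{DO18}) requires only the \emph{Fattorini--Hautus test} $\ker(\lambda-A^*)\cap\ker B^*=\{0\}$, which is a spectral unique-continuation statement and is trivial for this system: an eigenfunction of $A_M^*$ with $B^*z=0$ satisfies a linear first-order ODE system in $x$ with $z(1)=0$ and hence vanishes, with no assumption on $M$ beyond $L^\infty$. The evolutionary unique-continuation property you worry about (an adjoint state invisible from $x=1$ on a window of length $>T$ must vanish) is precisely what Russell's older version of the argument would require, and the paper takes pains to explain that this is exactly what it \emph{avoids}, because it is not easier than approximate controllability itself. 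The genuinely hard part, and the bulk of the paper's analysis, is proving that $\Phi_1(T)^*-\Phi_2(T)^*$ is compact: this is done by expanding the Volterra integral equation for the perturbed adjoint semigroup, identifying each term as an integral operator with an $L^\infty$ kernel in characteristic coordinates (Lemma \ref{lem cpct op}), and it is at this point that the assumption \eqref{hyp Rus78 eg} is used (through Lemma \ref{prop hyp Rus78}) to eliminate resonant off-diagonal couplings between characteristics with identical speeds, without which the change of variables in the kernel would degenerate. None of this appears in your outline, and it is the real content of the transfer from diagonal to general $M$.

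Finally, the algebraic identity you conjecture, $\ell(k)=m-c_{p-k+1}(Q)+1$, is incorrect; the correct relation (Appendix \ref{app time Weck}, equation \eqref{what is lk}) is $\ell(k)=\min(c_p(Q),\ldots,c_{p-k+1}(Q))$, after which the equality of the two maxima follows from the ordering \eqref{order times} by an induction on $k$. You were right that this bookkeeping needs care; the conjectured formula would give the wrong $T_c$ in general.
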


To the best of our knowledge, this is the first result that completely characterizes the minimal time for the exact controllability of \eqref{syst} for any given $M$ and $Q$.
Not only this, but this result also shows that the time \eqref{thm Topt} is explicit in terms of $\Lambda$ (recall \eqref{comp Ti}) and in terms of $Q$ as well, since the computation of the indices $c_i(Q)$ rely on the Gaussian elimination, which is a very efficient algorithm that shows that the minimal time \eqref{thm Topt} is actually easy to compute in practice.

\begin{example}\label{rem ex}
A comparison with the results of \cite{CN18} can be made.
For $Q_1 \in \R^{3 \times 4}$ of Example \ref{ex Q} we have
$$\Toptlib{\Lambda,M,Q_1}=\max\left(T_4(\Lambda),T_1(\Lambda)+T_5(\Lambda),T_2(\Lambda)+T_6(\Lambda),T_3(\Lambda)+T_7(\Lambda)\right)=\TCN.$$
On the contrary, the case of $Q_2 \in \R^{3 \times 3}$ of Example \ref{ex Q} is not covered by the results \cite{CN18}, and for this parameter we have
$$
\begin{array}{rl}
\Toptlib{\Lambda,M,Q_2} &=\max\left(T_4(\Lambda),T_1(\Lambda)+T_6(\Lambda),T_2(\Lambda)+T_4(\Lambda),T_3(\Lambda)+T_5(\Lambda)\right) \\
&=\max\left(T_2(\Lambda)+T_4(\Lambda),T_3(\Lambda)+T_5(\Lambda)\right).
\end{array}
$$
\end{example}

\begin{remark}\label{rem time Wec82}
In Appendix \ref{app time Weck} below we prove that (assuming $\rank Q=p$)
$$\max_{i \in \ens{1,\ldots,p}} (T_{p+1}(\Lambda), T_i(\Lambda)+T_{p+c_i(Q)}(\Lambda))=T_c,$$
where we recall that $T_c$ is given in \eqref{Time Weck}.
Therefore, Theorem \ref{main thm} shows that the time $T_c$ introduced in \cite{Wec82} for the null controllability of \eqref{syst} with diagonal $M$ is also the minimal time for the exact controllability of \eqref{syst} for arbitrary $M$.
As a by-product, our method gives the most efficient way to compute the time $T_c$, which, a priori by the look of \eqref{Time Weck}-\eqref{def ellk}, would require more computations (we invite the reader to consider the example in \cite[Section 4]{Wec82}: it requires a single computation to find $c_2(C)=2$ and $c_1(C)=1$).
\end{remark}

\begin{remark}
The assumption \eqref{hyp Rus78 eg} has been introduced in \cite[Section 3]{Rus78}.
If it is not satisfied, then the conclusion \ref{conclu 2 main thm} of Theorem \ref{main thm} is no longer true in general.
We have detailed a counterexample in Appendix \ref{app counterexample} below that shows that the time $T_p(\Lambda)+T_{p+1}(\Lambda)$ may not be improved in such a case.
\end{remark}

\begin{remark}\label{rem indep M}
Observe that the expression \eqref{thm Topt} of $\Topt{M}$ does not depend on $M$.
This means that the internal coupling terms $M(x)y(t,x)$ in \eqref{syst} have almost no impact on the controllability properties of this system.
All our attention should then be on the coupling on the boundary $Q$.
Let us however mention that whether the infimum in the definition \eqref{def Topt} of $\Topt{M}$ is or is not a minimum depends on the values of $M$.
For instance we will see in Section \ref{sect unp syst} below that for $M=0$ the infimum is reached.
This also remains true for nonzero but sufficiently small $M$ since the exact controllability is a property that is stable by small bounded perturbations (see e.g. \cite[Theorem 4.1]{DR77}).
On the other hand, there exists $M$ such that the infimum is not a minimum.
In fact, by using the techniques we will develop below, it can be shown that the minimum is reached if, and only if, \eqref{syst} is approximately controllable in time $\Topt{M}$, and it is known that this latter property may fail, as for instance illustrated in \cite[pp. 659-661]{Rus78} (see also item 2. of \cite[Theorem 1]{CN18} and Appendix \ref{app counterexample} below).
A complete characterization of the parameters $M$ and $Q$ for which the infimum is equal to the minimum seems still an open problem (some partial results can be found in \cite{CN18}).
\end{remark}

\begin{remark}\label{rem optimality}
We have seen that $T_p(\Lambda)+T_{p+1}(\Lambda)$ is the worst possible time of control.
On the other hand, it can be checked that (assuming that $m \geq p$)
$$
\min_{\substack{(c_1,\ldots,c_p) \in \ens{1,\ldots,m} \\ c_j \neq c_k, \, j \neq k}}
\left(\max_{i \in \ens{1,\ldots,p}} (T_{p+1}(\Lambda), T_i(\Lambda)+T_{p+c_i}(\Lambda))\right)
=\TCN,
$$
where we recall that $\TCN$ is defined in \eqref{time CN18}, and the minimum is reached for $c_i$ satisfying
\begin{equation}\label{best ci}
c_i=m-p+i, \quad \forall i \in \ens{1,\ldots,p}.
\end{equation}
The condition \eqref{best ci} means that the canonical form $Q^0$ of $Q$ is an upper triangular matrix, see e.g. $Q_1^0$ of Example \ref{ex Qzero}.
Thus in this case $Q$ has a ``standard'' $UL$--decomposition.
Moreover, it can be shown with the Gaussian elimination that a full row-rank matrix $Q$ admits such a decomposition if, and only if, $Q$ satisfies \eqref{cond CN18} for every $ i \in \ens{1,\ldots,p}$ (see e.g. \cite[Theorem II.1]{Gan59}).
As a result, we see that we recover the time and the assumption given in \cite[Theorem 2]{CN18}.
Note as well that our observation justifies the name of ``optimal time'' given in this article (before it, there were no real justification to such a naming).
\end{remark}

\begin{remark}
Let us emphasize that all along this work we are interested in the controllability properties in the space $L^2(0,1)^n$, which means that all the components of the system belong to the same space $L^2(0,1)$.
The behavior of \eqref{syst} is very different if we allow the components to lie in different spaces.
For instance, the exact controllability can hold even if $\rank Q<p$ (compare with \ref{conclu 1 main thm} of Theorem \ref{main thm}) and the internal coupling term $M$ can help to make a system become exactly controllable (compare with Remark \ref{rem indep M}).
We refer for instance to \cite[Theorem 10.1]{Li10} for an illustration of such a situation.
\end{remark}

The rest of the paper is organized as follows.
In the next section we simply recast the system \eqref{syst} into its abstract form and prove basic properties.
In Section \ref{sect unp syst}, we make use of the notion of canonical $UL$--decomposition to establish necessary and sufficient conditions for the system \eqref{syst} to be exactly controllable in a given time when there are no internal coupling terms, i.e. when $M=0$.
In Section \ref{sect cpct uniq} we use compactness-uniqueness arguments to show that the minimal time of control remains the same when we add a bounded perturbation $M$.
Finally, we postponed in the appendix several auxiliary results for the sake of
the presentation.

\section{Abstract setting}\label{sect abst}

It is well-known that the system \eqref{syst} can equivalently be rewritten as an abstract evolution system:
\begin{equation}\label{abst syst}
\left\{\begin{array}{l}
\ds \ddt y(t)=A_M y(t)+Bu(t), \quad t \in (0,+\infty), \\
y(0)=y^0,
\end{array}\right.
\end{equation}
also to be referred to as $(A_M,B)$ in the sequel, where we can identify the operators $A_M$ and $B$ through their adjoints by formally taking the inner product of \eqref{abst syst} with a smooth function $\varphi$ and then comparing with \eqref{weak sol}.
The state and control spaces are
$$H=L^2(0,1)^n, \quad U=\R^m.$$
They are equipped with their usual inner products and identified with their dual.
The unbounded linear operator $A_M: \dom{A_M} \subset H \longrightarrow H$ is defined, for every $y \in \dom{A_M}$ by
$$
A_My(x)=\Lambda(x) \px{y}(x)+M(x) y(x),
\quad x \in (0,1),
$$
with domain
$$\dom{A_M}
=\ens{y \in H^1(0,1)^n, \quad y_+(0)=Qy_-(0),\quad y_-(1)=0}.
$$
It is clear that $\dom{A_M}$ is dense in $H$ since it contains $C^{\infty}_c(0,1)^n$.
A computation shows that
$$
\dom{A_M^*}
=\ens{z \in H^1(0,1)^n, \quad z_+(1)=0, \quad z_-(0)=\tr{R}z_+(0)},
$$
where we recall that $R \in \R^{p \times m}$ is defined in \eqref{def R}, and we have, for every $z \in \dom{A_M^*}$,
\begin{equation}\label{op adj}
A_M^*z(x)=
-\Lambda(x)\px{z}(x)
+\left(-\px{\Lambda}(x)+M(x)^*\right)z(x),
\quad x \in (0,1).
\end{equation}
Note that in fact $\dom{A_M^*}$ does not depend on $M$.
On the other hand, the control operator $B \in \lin{U,\dom{A_M^*}'}$ is given for every $u \in U$ and $z \in \dom{A_M^*}$ by
$$
\ps{Bu}{z}{\dom{A_M^*}',\dom{A_M^*}}=u \cdot  \Lambda_{-}(1) z_-(1).
$$
Note that $B$ is well-defined since $Bu$ is continuous on $H^1(0,L)^n$ (by the trace theorem $H^1(0,1)^n \hookrightarrow C^0([0,1])^n$) and since $\norm{\cdot}_{\dom{A_M^*}}$ and $\norm{\cdot}_{H^1(0,1)^n}$ are equivalent norms on $\dom{A_M^*}$.
Finally, the adjoint $B^* \in \lin{\dom{A_M^*},U}$ is given for every $z \in \dom{A_M^*}$ by
$$B^*z=\Lambda_{-}(1) z_-(1).$$

Using the method of characteristics, it is not difficult to show that the operator $A_M$ generates a $C_0$-semigroup when $M$ is diagonal and we even have an explicit formula for it.
Since we will mainly perform computations on the adjoint semigroup in the sequel, it is then when $M=\px{\Lambda}$ that the adjoint semigroup will have the simplest expression (see \eqref{op adj}).

\begin{proposition}\label{prop semigp explicit}
For every $i \in \ens{1,\ldots,p}$ and $j \in \ens{1,\ldots,m}$, let $\phi_i,\phi_{p+j} \in C^{1,1}([0,1])$ be the non-negative and increasing functions defined for every $x \in [0,1]$ by
\begin{equation}\label{def phi}
\phi_i(x)=-\int_0^x \frac{1}{\lambda_i(\xi)} \, d\xi,
\quad
\phi_{p+j}(x)=\int_0^x \frac{1}{\lambda_{p+j}(\xi)} \, d\xi,
\end{equation}
(note that $\phi_i(1)=T_i(\Lambda)$ and $\phi_{p+j}(1)=T_{p+j}(\Lambda)$, see \eqref{comp Ti}).
Then, the operator $A_{\px{\Lambda}}^*$ generates a $C_0$-semigroup on $H$ given, for every $t \geq 0$ and $z^0 \in H$, by
\begin{equation}\label{formula semigp 1}
\left(S_{A_{\px{\Lambda}}}(t)^*z^0\right)_i(x)
=
\left\{\begin{array}{cl}
\ds z_i^0\left(\phi_i^{-1}\left(t+\phi_i(x)\right)\right), & \mbox{ if }t+\phi_i(x)<\phi_i(1), \\
\ds 0, & \mbox{ if } t+\phi_i(x)>\phi_i(1),
\end{array}\right.
\end{equation}
for every $i \in \ens{1,\ldots,p}$ and a.e. $x \in (0,1)$, and by
\begin{multline}\label{formula semigp 2}
\left(S_{A_{\px{\Lambda}}}(t)^*z^0\right)_{p+j}(x)
\\
=
\left\{\begin{array}{cl}
\ds z_{p+j}^0\left(\phi_{p+j}^{-1}\left(\phi_{p+j}(x)-t\right)\right), & \mbox{ if } t-\phi_{p+j}(x)<0, \\
\ds \sum_{i=1}^p r_{i,p+j}
z^0_i\left(\phi_i^{-1}\left(t-\phi_{p+j}(x)\right)\right), & \mbox{ if } 0<t-\phi_{p+j}(x)<\phi_1(1), \\
\vdots & \vdots \\
\ds \sum_{i=k+1}^p r_{i,p+j}
z^0_i\left(\phi_i^{-1}\left(t-\phi_{p+j}(x)\right)\right), & \mbox{ if } \phi_k(1)<t-\phi_{p+j}(x)<\phi_{k+1}(1), \\
\vdots & \vdots \\
0 &  \mbox{ if }\phi_p(1)<t-\phi_{p+j}(x),
\end{array}\right.
\end{multline}
for every $j \in \ens{1,\ldots,m}$ and a.e. $x \in (0,1)$.
\end{proposition}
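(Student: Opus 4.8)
The plan is to construct the adjoint semigroup $S_{A_{\px{\Lambda}}}(t)^*$ explicitly by the method of characteristics applied to the evolution problem governed by $A_{\px{\Lambda}}^*$, and then to verify that the formulas \eqref{formula semigp 1}--\eqref{formula semigp 2} indeed give a $C_0$-semigroup whose generator is $A_{\px{\Lambda}}^*$. First I would write down the backward characteristic system: for $z^0 \in \dom{A_{\px{\Lambda}}^*}$, the solution $z(t)=S_{A_{\px{\Lambda}}}(t)^*z^0$ of $\ddt z = A_{\px{\Lambda}}^* z$ satisfies, componentwise, a transport equation $\pt{z_i} = -\lambda_i \px{z_i}$ (the zeroth-order term $-\px{\Lambda}+M^*$ vanishes precisely because $M=\px{\Lambda}$, so the adjoint operator \eqref{op adj} reduces to pure transport). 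Using the change of variables given by $\phi_i$ defined in \eqref{def phi} — which, by \eqref{caract} and \eqref{comp Ti}, straightens the characteristics and satisfies $\phi_i(1)=T_i(\Lambda)$ — each component is constant along its characteristic lines until it reaches a boundary.

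The key step is to track the boundary conditions in $\dom{A_{\px{\Lambda}}^*}$, namely $z_+(1)=0$ and $z_-(0)=R^* z_+(0)$. For $i \in \ens{1,\ldots,p}$ (the components with $\lambda_i<0$, hence travelling towards $x=0$ under the adjoint flow, i.e. information enters at $x=1$), the condition $z_i(t,1)=0$ forces the value to be $0$ whenever the characteristic through $(t,x)$ exits through $x=1$ at a negative time, i.e. when $t+\phi_i(x)>\phi_i(1)$; otherwise it equals $z^0_i$ evaluated at the foot of the characteristic, which gives \eqref{formula semigp 1}. For $j \in \ens{1,\ldots,m}$, the components $z_{p+j}$ travel towards $x=1$, so information enters at $x=0$ through $z_-(0)=R^*z_+(0)$, that is $z_{p+j}(t,0)=\sum_{i=1}^p r_{i,p+j} z_i(t,0)$. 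One then substitutes the already-known expression \eqref{formula semigp 1} for $z_i(t,0)$, which itself is $0$ once $t>\phi_i(1)=T_i(\Lambda)$; since the $\phi_i(1)$ are ordered by \eqref{order times}, exactly the terms with $i>k$ survive when $\phi_k(1)<t-\phi_{p+j}(x)<\phi_{k+1}(1)$, producing the cascade in \eqref{formula semigp 2}. The bookkeeping of which characteristic hits which boundary at which time, and the reordering of the resulting sums according to \eqref{order times}, is the main obstacle — it is not deep but it is where one must be careful, in particular at the thresholds $t-\phi_{p+j}(x)=\phi_k(1)$ where the formula has measure-zero ambiguities (harmless in $L^2$).

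Finally I would verify the semigroup properties: $S(0)^*=\Id$ is immediate; the cocycle identity $S(t+s)^*=S(t)^*S(s)^*$ follows from the composition of characteristic flows together with the fact that the boundary feed-in is applied consistently (here one checks that plugging the formula at time $s$ into the boundary condition and then flowing for time $t$ reproduces the formula at time $t+s$); strong continuity $t \mapsto S(t)^*z^0$ in $H=L^2(0,1)^n$ follows by density of $C^1$ data and the continuity of translations in $L^2$, using the uniform Lipschitz bounds on $\phi_i$ and $\phi_i^{-1}$ coming from $\Lambda \in C^{0,1}([0,1])^{n\times n}$ with $\lambda_i$ bounded away from $0$. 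That the generator is $A_{\px{\Lambda}}^*$ is checked by differentiating the formula at $t=0$ on the dense subspace $\dom{A_{\px{\Lambda}}^*}$ and matching with \eqref{op adj}; uniqueness of the generator then identifies the semigroup, and dualizing gives that $A_{\px{\Lambda}}$ generates $S_{A_{\px{\Lambda}}}(t)$ as claimed. I would also record the boundedness estimate $\norm{S(t)^*}_{\lin{H}} \leq Ce^{\omega t}$, which is visible directly from the explicit formula since each term is a composition of an $L^2$-isometry-up-to-Jacobian with multiplication by the constant matrix $R$.
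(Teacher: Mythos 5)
Your plan follows the paper's proof essentially step for step: reduce the adjoint equation to pure transport (the zeroth-order term in $A_{\px{\Lambda}}^*$ vanishes because $M=\px{\Lambda}$), integrate along characteristics using the change of variables $\phi_i$, track the boundary conditions $z_+(\cdot,1)=0$ and $z_-(\cdot,0)=R^*z_+(\cdot,0)$, and invoke the ordering \eqref{order times} to see which terms of the sum survive. One small slip in your description: for $i\le p$ the solution vanishes when the backward characteristic through $(t,x)$ reaches $x=1$ at a \emph{positive} time $t+\phi_i(x)-\phi_i(1)>0$ (so that it picks up the zero boundary datum), not a negative one, though the inequality $t+\phi_i(x)>\phi_i(1)$ you wrote is correct.
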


\begin{proof}
We only show how to find the formula \eqref{formula semigp 1} and \eqref{formula semigp 2}.
It can be checked afterwards that these formula define a $C_0$-semigroup and that $A_{\px{\Lambda}}^*$ is indeed the corresponding generator (by using the very definition of what is a $C_0$-semigroup).
We recall that $\tilde{z}(t)=S_{A_{\px{\Lambda}}}(t)^*z^0$ is the unique solution to the following abstract O.D.E. when $z^0 \in \dom{A_{\px{\Lambda}}^*}$ (see e.g. \cite[Lemma II.1.3]{EN00}):
$$
\left\{\begin{array}{l}
\ds \ddt \tilde{z}(t)=A_{\px{\Lambda}}^* \tilde{z}(t), \quad t \in [0,+\infty), \\
\tilde{z}(0)=z^0.
\end{array}\right.
$$
Therefore, we expect $\tilde{z}$ to solve 
\begin{equation}\label{syst adj f}
\left\{\begin{array}{l}
\ds \pt{\tilde{z}}(t,x)=-\Lambda(x) \px{\tilde{z}}(t,x), \\
\tilde{z}_+(t,1)=0, \quad \ds \tilde{z}_-(t,0)=R^*\tilde{z}_+(t,0),  \\
\tilde{z}(0,x)=z^0(x),
\end{array}\right.
\quad t \in [0,+\infty), x \in (0,1).
\end{equation}
Let us now introduce the characteristics associated to the system \eqref{syst adj f}.
For every $i \in \ens{1,\ldots,n}$, every $t \geq 0$ and $x \in [0,1]$ fixed, we introduce the characteristic $\caractt_i(\cdot;t,x) \in C^1\left(\left[\ssint_i(t,x),\ssoutt_i(t,x)\right]\right)$ passing through $(t,x)$, that is the solution to the ordinary differential equation:
$$
\left\{\begin{array}{l}
\ds \dds\caractt_i(s;t,x)=\lambda_i\left(\caractt_i(s;t,x)\right), \quad s \in \left[\ssint_i(t,x),\ssoutt_i(t,x)\right],\\
\caractt_i(t;t,x)=x,
\end{array}\right.
$$
where $\ssint_i(t,x), \ssoutt_i(t,x) \in \R$ (with $\ssint_i(t,x)<t<\ssoutt_i(t,x)$) are the enter and exit parameters of the domain $[0,1]$, that is the unique respective solutions to
\begin{equation}\label{def ssin bis}
\left\{\begin{array}{lll}
\caractt_i(\ssint_i(t,x);t,x)=1, & \caractt_i(\ssoutt_i(t,x);t,x)=0, & \text{ if } i \in \ens{1,\ldots,p}, \\
\caractt_i(\ssint_i(t,x);t,x)=0, & \caractt_i(\ssoutt_i(t,x);t,x)=1, & \text{ if } i \in \ens{p+1,\ldots,n}.
\end{array}\right.
\end{equation}
Let us first find $\tilde{z}_i$ for $i \in \ens{1,\ldots,p}$.
Since $\tilde{z}_i$ solves
$$
\left\{\begin{array}{l}
\ds \pt{\tilde{z}_i}(t,x)+\lambda_i(x) \px{\tilde{z}_i}(t,x)=0, \\
 \tilde{z}_i(t,1)=0,  \\
\tilde{z}_i(0,x)=z_i^0(x),
\end{array}\right.
\quad t \in [0,+\infty), x \in (0,1),
$$
along the characteristic $\caractt_i$ we have
$$\frac{d}{ds} \tilde{z}_i\left(s,\caractt_i(s;t,x)\right)=0, \quad \forall s \in [\ssint_i(t,x),\ssoutt_i(t,x)], \quad s \in [0,+\infty).$$
It follows that
\begin{equation}\label{formula semigp 1 gen}
\tilde{z}_i(t,x)=
\left\{\begin{array}{cl}
\ds z_i^0\left(\caractt_i(0;t,x)\right), & \mbox{ if } \ssint_i(t,x)<0, \\
\ds 0, & \mbox{ if } \ssint_i(t,x)>0.
\end{array}\right.
\end{equation}
On the other hand, for $j \in \ens{1,\ldots,m}$, similar computations lead to
\begin{equation}\label{formula semigp 2 gen}
\tilde{z}_{p+j}(t,x)=
\left\{\begin{array}{cl}
\ds z_{p+j}^0\left(\caractt_{p+j}(0;t,x)\right), & \ssint_{p+j}(t,x)<0, \\
\ds \sum_{i=1}^p r_{i,p+j}\tilde{z}_i\left(\ssint_{p+j}(t,x),0\right), & \ssint_{p+j}(t,x)>0.
\end{array}\right.
\end{equation}

Now, since $\lambda_i$ does not depend on time, we have a more explicit formula for $\caractt_i(0;t,x)$ and $\ssint_i(t,x)$.
Indeed, the inverse function $\xi \mapsto \caractt_i^{-1}(\xi;t,x)$ solves
\begin{equation}\label{caract inv zi}
\left\{\begin{array}{l}
\ds \pxi{\caractt_i^{-1}}(\xi;t,x)=\frac{1}{\partials{\caractt_i}\left(\caractt_i^{-1}(\xi;t,x);t,x\right)}=\frac{1}{\lambda_i(\xi)}, \quad \xi \in [0,1],\\
\caractt_i^{-1}(x;t,x)=t.
\end{array}\right.
\end{equation}
Therefore, $\caractt_i^{-1}(y;t,x)=t+\int_x^y \frac{1}{\lambda_i(\xi)} \, d\xi$.
Using the functions \eqref{def phi}, we have
\begin{equation}\label{inv caract}
\caractt_i^{-1}(y;t,x)=
\left\{\begin{array}{ll}
\ds t+\phi_i(x)-\phi_i(y), & \mbox{ if } i \in \ens{1,\ldots,p}, \\
\ds t-\phi_i(x)+\phi_i(y), & \mbox{ if } i \in \ens{p+1,\ldots,n}.
\end{array}\right.
\end{equation}
Recalling the definition \eqref{def ssin bis} of $\ssint_i(t,x)$, we then have
\begin{equation}\label{express ssin}
\ssint_i(t,x)=
\left\{\begin{array}{ll}
\ds t+\phi_i(x)-\phi_i(1), & \mbox{ if } i \in \ens{1,\ldots,p}, \\
\ds t-\phi_i(x), & \mbox{ if } i \in \ens{p+1,\ldots,n},
\end{array}\right.
\end{equation}
and
\begin{equation}\label{express caract}
\caractt_i(0;t,x)=
\left\{\begin{array}{ll}
\ds \phi_i^{-1}\left(t+\phi_i(x)\right), & \mbox{ if } i \in \ens{1,\ldots,p} \mbox{ and } \ssint_i(t,x)<0, \\
\ds \phi_i^{-1}\left(\phi_i(x)-t\right), & \mbox{ if } i \in \ens{p+1,\ldots,n} \mbox{ and } \ssint_i(t,x)<0.
\end{array}\right.
\end{equation}
Plugging these formula in \eqref{formula semigp 1 gen} and \eqref{formula semigp 2 gen}, and taking into account that $\phi_i(1) \leq \phi_{i+1}(1)$ for every $i \in \ens{1,\ldots,p-1}$ by \eqref{order times}, we obtain \eqref{formula semigp 1} and \eqref{formula semigp 2}.

\end{proof}

\begin{remark}\label{rem trace}
Observe that the right-hand sides in \eqref{formula semigp 1} and \eqref{formula semigp 2}, considered as functions of $t$ and $x$,  make sense for $z^0 \in L^2(0,1)^n$ only (i.e. the compositions are well-defined), either for every $t \geq 0$ and a.e. $x \in [0,1]$, or for every $x \in [0,1]$ and a.e. $t \geq 0$.
For instance for \eqref{formula semigp 1} this follows from the fact that the maps $x \in (0,\phi_i^{-1}(\phi_i(1)-t)) \mapsto \phi_i^{-1}\left(t+\phi_i(x)\right)$ and $t \in (0,\phi_i(1)-\phi_i(x)) \mapsto \phi_i^{-1}\left(t+\phi_i(x)\right)$ are $C^1$-diffeomorphisms (for every $t \in [0,\phi_i(1))$ and $x \in [0,1)$, respectively).
For the rest of this article, we then abuse the notation $S_{A_{\px{\Lambda}}}(t)^*z^0(x)$   to denote either of these functions when $z^0 \in L^2(0,1)^n$.
\end{remark}

Let us now turn out to the properties of the control operator $B$.
First of all, it can be checked directly from the formula \eqref{formula semigp 2} that, when $z^0 \in \dom{A_{\px{\Lambda}}^*}$, the function $x \mapsto \left(S_{A_{\px{\Lambda}}}(t)^*z^0\right)_-(x)$ belongs to $H^1(0,1)^m$ and has a trace at $x=1$ equal to $\left(S_{A_{\px{\Lambda}}}(t)^*z^0\right)_-(1)$ since the right-hand side of \eqref{formula semigp 2} is a continuous function of $x$ on $[0,1]$ for such $z^0$.
A simple change of variable then easily shows that, for any $0<T<\phi_n(1)$, there exists $C>0$ such that
\begin{equation}\label{admiss}
\int_0^T \norm{B^*S_{A_{\px{\Lambda}}}(t)^*z^0}_U^2 \, dt \leq C\norm{z^0}_H^2 , \quad \forall z^0 \in \dom{A_{\px{\Lambda}}^*}.
\end{equation}
This property shows that $B$ is a so-called admissible control operator for $A_{\px{\Lambda}}$ (see e.g. \cite[Theorem 4.4.3]{TW09}).

Since the operator $A_M$ is nothing but a bounded perturbation of $A_{\px{\Lambda}}$, it follows that $A_M$ also generates a $C_0$-semigroup on $H$ (see e.g. \cite[Theorem III.1.3]{EN00}) and that $B$ is also admissible for $A_M$ (see e.g. \cite[p. 401]{DO18}).
It also follows that the abstract system \eqref{abst syst} is well-posed in the sense that: for every $y^0 \in H$ and every $u \in L^2(0,+\infty;U)$, there exists a unique solution $y \in C^0([0,+\infty);H)$ to \eqref{abst syst} given by the Duhamel formula (see e.g. \cite[Proposition 4.2.5]{TW09}):
\begin{equation}\label{duhamel}
y(T)=S_{A_M}(T)y^0+\Phi_M(T)u, \quad \forall T \geq 0,
\end{equation}
where $\Phi_M(T)$ is the so-called input map of $(A_M,B)$, that is the linear operator defined for every $u \in L^2(0,+\infty;U)$ by
$$\Phi_M(T) u=\int_0^T S_{A_M}(T-s)Bu(s) \, ds.$$
We recall that a priori $\range \Phi_M(T) \subset \dom{A_M^*}'$ but the admissibility of $B$ in fact means that $\range \Phi_M(T) \subset H$ for some (and hence all) $T>0$ (see e.g. \cite[Definition 4.2.1]{TW09}).
From this assumption it follows that the function $T \in [0,+\infty) \mapsto \Phi_M(T)u \in H$ is continuous for every $u \in L^2(0,+\infty;U)$ (see e.g. \cite[Proposition 4.2.4]{TW09}), so that the function $y$ defined by \eqref{duhamel} indeed belongs to $C^0([0,+\infty);H)$.
From the admissibility of $B$ it also follows that $\Phi_M(T) \in \lin{L^2(0,+\infty;U),H}$ (see e.g. \cite[Proposition 4.2.2]{TW09}).
The adjoint $\Phi_M(T)^* \in \lin{H,L^2(0,+\infty;U)}$ is nothing but the unique continuous linear extension to $H$ of the map that takes $z^1 \in \dom{A_M^*}$ and associates to it the following function of $L^2(0,+\infty;U)$ (see e.g. \cite[Proposition 4.4.1]{TW09}):
$$
t \in (0,+\infty) \longmapsto
\left\{\begin{array}{cl}
B^*S_{A_M}(T-t)^*z^1, & \mbox{ if } t \in (0,T), \\
0, & \mbox{ if } t>T.
\end{array}\right.
$$
Finally, it can be checked that the function $y$ defined by \eqref{duhamel} satisfies \eqref{weak sol} and is thus the (weak) solution to \eqref{syst} in the sense of Definition \ref{def weak sol} (see e.g. \cite[pp. 63-65]{Cor07}).

Let us now recall that all the notions of controllability can be reformulated in terms of $\range \Phi_M(T)$.
Indeed, it is not difficult to see that $(A_M,B)$ is exactly (resp. approximately, approximately null) controllable in time $T$ if, and only if, $\range \Phi_M(T)=H$ (resp. $\clos{\range \Phi_M(T)}=H$, $\range S_{A_M}(T) \subset \clos{\range \Phi_M(T)}$).
It is also well-known that the controllability has a dual concept named observability.
More precisely (see e.g. \cite[Theorem 11.2.1]{TW09}):
\begin{itemize}
\item
$(A_M,B)$ is exactly controllable in time $T$ if, and only if, there exists $C>0$ such that
\begin{equation}\label{obs}
\norm{z^1}_H^2 \leq C \int_0^T \norm{\Phi_M(T)^*z^1(t)}_U^2 \, dt, \quad \forall z^1 \in H.
\end{equation}

\item
$(A_M,B)$ is approximately controllable in time $T$ if, and only if,
\begin{equation}\label{ucp}
\left(\Phi_M(T)^*z^1(t)=0, \quad \aew t \in (0,T)\right) \Longrightarrow z^1=0, \quad \forall z^1 \in H.
\end{equation}

\item
$(A_M,B)$ is approximately null controllable in time $T$ if, and only if,
\begin{equation}\label{ucp ANC}
\left(\Phi_M(T)^*z^1(t)=0, \quad \aew t \in (0,T)\right) \Longrightarrow S_{A_M}(T)^*z^1=0, \quad \forall z^1 \in H.
\end{equation}
\end{itemize}

Finally, for $M=\px{\Lambda}$, the adjoint of the input map $\Phi_{\px{\Lambda}}(T)^*$ is explicit.
Indeed, we see from the formula \eqref{formula semigp 2} that the operator $z^1 \in L^2(0,1)^n \longmapsto \Lambda_{-}(1)\left(S_{A_{\px{\Lambda}}}(T-\cdot)^*z^1\right)_-(1)$ (extended by zero outside $(0,T)$) belongs to $\lin{H,L^2(0,+\infty;U)}$.
Since we have already seen that it agrees with $B^*S_{A_{\px{\Lambda}}}(T-\cdot)^*z^1$ for $z^1 \in  \dom{A_{\px{\Lambda}}^*}$, by uniqueness of the continuous extension, this shows that the adjoint of the input map is given, for every $z^1 \in H$, by
$$\Phi_{\px{\Lambda}}(T)^*z^1(t)=\Lambda_{-}(1)\left(S_{A_{\px{\Lambda}}}(T-t)^*z^1\right)_-(1), \quad \mbox{ a.e. } t \in (0,T).$$

\section{Controllability of the unperturbed system}\label{sect unp syst}

The goal of this section is to characterize the minimal time for the exact controllability of the unperturbed system $(A_0,B)$, i.e. of the system
\begin{equation}\label{unp syst}
\left\{\begin{array}{l}
\ds \pt{y}(t,x)=\Lambda(x) \px{y}(t,x), \\
y_+(t,0)=Qy_-(t,0), \quad y_-(t,1)=u(t),  \\
y(0,x)=y^0(x),
\end{array}\right.
\quad t \in (0,+\infty), x \in (0,1).
\end{equation}

It is indeed natural to first investigate what happens when $M=0$ and constitutes a first step towards our main result Theorem \ref{main thm}.
We will then use a perturbation argument in the next section to deal with internal couplings $M \neq 0$.
For the system \eqref{unp syst} we will actually establish an even more precise result, namely:

\begin{theorem}\label{thm unp syst}
Let $\Lambda \in C^{0,1}([0,1])^{n \times n}$ satisfy \eqref{Lambda diag} and \eqref{hyp speeds}, and $Q \in \R^{p\times m}$ be fixed.
For every $T>0$, \eqref{unp syst} is exactly controllable in time $T$ if, and only if, the following two properties hold:
\begin{enumerate}[(i)]
\item\label{thm unp syst i1}
$\rank Q=p$.
\item\label{thm unp syst i2}
$T \geq \max_{i \in \ens{1,\ldots,p}} (T_{p+1}(\Lambda), T_i(\Lambda)+T_{p+c_i(Q)}(\Lambda))$.
\end{enumerate}
\end{theorem}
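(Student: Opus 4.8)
The plan is to characterize exact controllability in time $T$ through the reachable set from the origin: since exact controllability in time $T$ is equivalent to $\range\Phi_0(T)=H$, it suffices to determine for which $T$ every target $y^1\in L^2(0,1)^n$ can be reached at time $T$ starting from $y^0=0$. First I would write the solution of \eqref{unp syst} explicitly by the method of characteristics (exactly as in the proof of Proposition \ref{prop semigp explicit}, the formulas being even simpler here since $M=0$ produces no exponential weights). Introducing the ``delayed controls'' $v_j(t)=u_j(t-T_{p+j}(\Lambda))$ for $t\ge T_{p+j}(\Lambda)$ and $v_j(t)=0$ for $0\le t<T_{p+j}(\Lambda)$ (so that $v=y_-(\cdot,0)$, the trace at $x=0$), the map $u\mapsto v$ is a bijection from $L^2(0,+\infty)^m$ onto $\ens{v\in L^2(0,+\infty)^m : v_j=0 \text{ on }(0,T_{p+j}(\Lambda))}$, and the state at time $T$ with $y^0=0$ reads, for $i\in\ens{1,\dots,p}$ and $j\in\ens{1,\dots,m}$,
$$
y_i(T,x)=\Big(Qv\big(T-\phi_i(x)\big)\Big)_i \text{ if }\phi_i(x)<T\ (\text{else }0),
\qquad
y_{p+j}(T,x)=v_j\big(T+\phi_{p+j}(x)\big)\text{ if }T+\phi_{p+j}(x)>T_{p+j}(\Lambda)\ (\text{else }0).
$$

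Since $y_{p+j}(T,\cdot)$ depends only on $v_j$ restricted to $(T,T+T_{p+j}(\Lambda))$ through the $C^1$-diffeomorphism $x\mapsto T+\phi_{p+j}(x)$, and $v_j$ is free there precisely when $T\ge T_{p+j}(\Lambda)$, the component $y_-(T,\cdot)$ can be assigned arbitrarily, independently of the $y_+$-data, if and only if $T\ge T_{p+1}(\Lambda)$ (using $T_{p+m}(\Lambda)\le\cdots\le T_{p+1}(\Lambda)$); otherwise $y_{p+1}(T,\cdot)$ is forced to vanish on a set of positive measure. After the change of variable $s=T-\phi_i(x)$ in each $y_+$-equation, reaching an arbitrary $y_+(T,\cdot)$ amounts to: for every $h_1,\dots,h_p$ with $h_i\in L^2\big(\max(0,T-T_i(\Lambda)),T\big)$, find $v$ with $v_j=0$ on $(0,T_{p+j}(\Lambda))$ and $\sum_{j=1}^m Q_{ij}v_j(s)=h_i(s)$ on $\big(\max(0,T-T_i(\Lambda)),T\big)$ for each $i$. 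Because $T_1(\Lambda)\le\cdots\le T_p(\Lambda)$, the interval for $i=1$ is contained in all the others, so on it all $p$ equations hold; evaluating pointwise in $s$ gives $Qv(s)=(h_1(s),\dots,h_p(s))$ with arbitrary right-hand side, whence $\rank Q=p$ is necessary. This proves conclusion \ref{thm unp syst i1} and the necessity of $T\ge T_{p+1}(\Lambda)$ in \ref{thm unp syst i2}.

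Assume now $\rank Q=p$, and let $QL=Q^0$ be the canonical $UL$-decomposition of Proposition \ref{Gauss elim}. Setting $w=L^{-1}v$ turns $\sum_j Q_{ij}v_j$ into $\sum_k Q^0_{ik}w_k$, so the $y_+$-problem for $Q$ becomes the same problem for $Q^0$ with identical right-hand sides; moreover, since $L$ and $L^{-1}$ are lower triangular with unit diagonal and $T_{p+1}(\Lambda)\ge\cdots\ge T_{p+m}(\Lambda)$, one has $(0,T_{p+j}(\Lambda))\subset(0,T_{p+k}(\Lambda))$ whenever $k\le j$, and it follows that $v_j=0$ on $(0,T_{p+j}(\Lambda))$ for all $j$ if and only if $w_k=0$ on $(0,T_{p+k}(\Lambda))$ for all $k$. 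Hence the two problems have the same solvability. Now, by Remark \ref{rem zero dessous}, only the columns $c_1(Q),\dots,c_p(Q)$ of $Q^0$ are nonzero, with $q^0_{i,c_i(Q)}\neq 0$ and $q^0_{k,c_i(Q)}=0$ for $k>i$; taking the remaining $w_k\equiv 0$, the $i$-th equation is $\sum_{i'\ge i}q^0_{i,c_{i'}(Q)}w_{c_{i'}}(s)=h_i(s)$ on $\big(\max(0,T-T_i(\Lambda)),T\big)$. Solving these in the order $i=p,p-1,\dots,1$: at step $i$ the functions $w_{c_{i'}}$ with $i'>i$ are already defined on $\big(\max(0,T-T_{i'}(\Lambda)),T\big)\supseteq\big(\max(0,T-T_i(\Lambda)),T\big)$ (as $T_i(\Lambda)\le T_{i'}(\Lambda)$), so the equation determines $w_{c_i}$ uniquely on $\big(\max(0,T-T_i(\Lambda)),T\big)$ (using $q^0_{i,c_i(Q)}\neq0$), consistently with the values read at the later, smaller-$i$ steps; extending $w_{c_i}$ by $0$ on the complement, the dead-zone constraint $w_{c_i}=0$ on $(0,T_{p+c_i(Q)}(\Lambda))$ is compatible iff $\max(0,T-T_i(\Lambda))\ge T_{p+c_i(Q)}(\Lambda)$, i.e. iff $T\ge T_i(\Lambda)+T_{p+c_i(Q)}(\Lambda)$. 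Conversely, if $T<T_i(\Lambda)+T_{p+c_i(Q)}(\Lambda)$ for some $i$, then choosing a target with $h_{i'}=0$ for $i'>i$ and running the same elimination forces $h_i=0$ on the nonempty interval $\big(\max(0,T-T_i(\Lambda)),\min(T,T_{p+c_i(Q)}(\Lambda))\big)$, which is impossible. Therefore the $y_+$-problem (for $Q^0$, hence for $Q$) is solvable exactly when $T\ge T_i(\Lambda)+T_{p+c_i(Q)}(\Lambda)$ for all $i$; combining with the $y_-$ discussion, and recovering $u$ from $v=Lw$ (choosing $v|_{(T,+\infty)}$ freely to match $y_-(T,\cdot)$, which is possible since $T\ge T_{p+1}(\Lambda)$), gives both directions of Theorem \ref{thm unp syst}.

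The main obstacle is the reduction to the canonical form in the third step: one must check that the linear change of variables $v=Lw$ maps the family of admissible ``dead zones'' $\ens{v_j=0\text{ on }(0,T_{p+j}(\Lambda))}$ bijectively onto $\ens{w_k=0\text{ on }(0,T_{p+k}(\Lambda))}$, and this relies crucially on the lower-triangularity of $L$ fitting together with the monotonicity of the transit times $T_{p+j}(\Lambda)$ — this is precisely what makes the canonical $UL$-decomposition, with its specific column ordering, the right normalization. The remaining care is the bookkeeping in the descending elimination (which control component is read, and which is written, on which subinterval, and in what order), which must be shown to be self-consistent.
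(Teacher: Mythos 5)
Your proof is correct, and it takes a genuinely different route from the paper's. The paper argues by duality: it proves the observability inequality \eqref{key obs} for the adjoint system (sufficiency) and constructs explicit adjoint data $z^1$ for which the observed trace $z_-(\cdot,1)$ vanishes while $z(0,\cdot)$ does not (necessity), using the canonical $UL$-decomposition to solve the adjoint boundary relation \eqref{BC equ bis} for $z_+(t,0)$ row by row in terms of $z_-(t,0)$. You work instead on the primal side: starting from $y^0=0$ you compute $\range\Phi_0(T)$ directly by characteristics, pass to the delayed trace $v=y_-(\cdot,0)$ carrying the dead-zone constraints $v_j=0$ on $(0,T_{p+j}(\Lambda))$, and reduce reachability of $y_+(T,\cdot)$ to a family of pointwise linear systems for $v$ on nested time intervals. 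The canonical $UL$-decomposition enters through the substitution $v=Lw$, and your pivotal observation — that the dead-zone constraints transform bijectively under this lower-triangular change precisely because the exit times $T_{p+1}(\Lambda)\geq\cdots\geq T_{p+m}(\Lambda)$ are monotone — is the clean primal counterpart of steps 2)–4) of Section \ref{sect suff cond}. Both arguments then run the same descending elimination over $i=p,\ldots,1$ via \eqref{cond proof direct} and reach the same compatibility threshold $T\geq T_i(\Lambda)+T_{p+c_i(Q)}(\Lambda)$. Your version is constructive, exhibiting the steering control and making the role of the $UL$-decomposition as a Gaussian elimination on the control equations transparent; the paper's dual version yields a quantitative observability inequality, which is exactly what the compactness-uniqueness machinery of Section \ref{sect cpct uniq} needs to pass to general $M$. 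One small imprecision worth fixing: the statement that reaching an arbitrary $y_+(T,\cdot)$ ``amounts to'' solving the $h_i$-problem is an equivalence only when $T\geq T_p(\Lambda)$, so that the change of variable $s=T-\phi_i(x)$ covers all of $(0,1)$ for every $i$; for $T<T_i(\Lambda)$ it is only a necessary condition (part of $y^1_i$ is forced to vanish outright), which is all you actually use there, but the quantifier should be made explicit.
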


\begin{remark}
Note that the assumption \eqref{hyp Rus78 eg} is not needed in Theorem \ref{thm unp syst}.
We also point out that the assumption \eqref{hyp speeds} could be weaken all along this section into the following:
\begin{equation}\label{hyp speeds bis}
\lambda_i(x)<0<\lambda_{p+j}(x), \quad \forall x \in [0,1], \quad \forall i \in \ens{1,\ldots,p}, \quad \forall j \in \ens{1,\ldots,m},
\end{equation}
as long as we assume that the eigenvalues are ordered in such a way that \eqref{order times} holds, which can always be done without loss of generality.
\end{remark}

Theorem \ref{thm unp syst} follows in fact from \cite[Theorems 1 and 2]{Wec82} if we show that $\rank Q=p$ is necessary for the exact controllability and that the time in \ref{thm unp syst i2} is the time of \cite{Wec82}.
The first point is easy as we shall see below and the second point is proven in Appendix \ref{app time Weck} below as already mentioned before.
However, we would like to present a slightly different proof here.
The motivation of this is twofold.
Firstly, it is not really explained where the definition of the indices $\ell(k)$ (through the condition \eqref{def ellk}) comes from in \cite{Wec82}.
Secondly, even if we choose to use the results of \cite{Wec82}, to obtain Theorem \ref{thm unp syst} as it is stated we still need to prove the two points mentioned above (the second being non trivial).
As a result, our proof has the advantage to show why we introduced the notion of canonical $UL$--decomposition and, in addition, it naturally gives an expression of the time $T_c$ that is in practice faster to compute than in the formulation of \cite{Wec82} (see Remark \ref{rem time Wec82}).

Let us also recall that Theorem \ref{thm unp syst} has been obtained independently in \cite[Proposition 1]{CN18} but only under stronger assumptions on $Q$ (namely, it has to satisfy \eqref{cond CN18} for every $i \in \ens{1,\ldots,p}$).
Finally, we would like to mention \cite[Theorem 1.1]{Hu15} for a related result concerning a quasilinear version of \eqref{unp syst}.

The key point to solve this problem is to carefully investigate the boundary condition at $x=0$, which somehow allows to transfer the actions of the controls to the indirectly controlled components (i.e. to the components associated with positive speeds in our framework).
This is where the introduction of the canonical $UL$--decomposition of $Q$ is crucial.
It can be considered as the counterpart of how the boundary condition was handled in \cite[Lemma p.5]{Wec82}.

Before giving the proof of Theorem \ref{thm unp syst} we mention that we can add any diagonal matrix to the system \eqref{unp syst} without changing its controllability properties.
We use it to simplify the diagonal terms in the adjoint system, and thus the computations below (in other words, we can use the formula \eqref{formula semigp 1} and \eqref{formula semigp 2}).

\begin{proposition}\label{prop stab diag mat}
Let $\Lambda \in C^{0,1}([0,1])^{n \times n}$ satisfy \eqref{Lambda diag} and \eqref{hyp speeds} and $Q \in \R^{p\times m}$.
For every $T>0$, $(A_0,B)$ is exactly controllable in time $T$ if, and only if, $(A_{\px{\Lambda}},B)$ is exactly controllable in time $T$.
\end{proposition}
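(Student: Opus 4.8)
The plan is to show that $(A_0,B)$ and $(A_{\px{\Lambda}},B)$ are related by a simple invertible state transformation that does not see the control, so that exact controllability is preserved. Concretely, I would look for a pointwise-in-$x$, time-independent, invertible diagonal multiplier $P \in L^\infty(0,1)^{n\times n}$ such that the change of unknowns $y \mapsto \tilde y = Py$ conjugates the two dynamics. Writing out $\pt{\tilde y} = P\Lambda\px{(P^{-1}\tilde y)} = \Lambda \px{\tilde y} + \Lambda P (\partial_x P^{-1})\tilde y$, one is led to choose $P$ solving the diagonal ODE $\Lambda P \partial_x P^{-1} = -\px{\Lambda}$, i.e. $P(x) = \diag\big(|\lambda_1(x)|,\ldots,|\lambda_n(x)|\big)$ (up to a nonzero constant per component), which is Lipschitz, bounded and boundedly invertible thanks to \eqref{hyp speeds}. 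I would then check that this $P$ also maps $\dom{A_0}$ onto $\dom{A_{\px{\Lambda}}}$: the condition $y_-(1)=0$ is clearly preserved (it is homogeneous), and the condition $y_+(0)=Qy_-(0)$ becomes $\tilde y_+(0) = P_+(0)QP_-(0)^{-1}\tilde y_-(0)$, so one works with the modified boundary matrix $\tilde Q = P_+(0)QP_-(0)^{-1}$ rather than $Q$. Since $P_+(0)$ and $P_-(0)$ are invertible diagonal matrices, $\rank \tilde Q = \rank Q$, so the class of systems is the same.

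The second ingredient is that $P$ does not disturb the control. Since $B$ acts only through the boundary datum $y_-(t,1)=u(t)$ and $P_-(1)$ is diagonal and invertible, the transformed control is just $\tilde u(t) = P_-(1)u(t)$, a bijective bounded reparametrization of the control space $U=\R^m$. Hence, at the abstract level, there are bounded invertible operators $\mathcal P \in \lin{H}$ (multiplication by $P$) and $\mathcal P_U \in \lin{U}$ such that $\Phi_{\px{\Lambda}}(T) = \mathcal P \,\Phi_0(T)\, \mathcal P_U^{-1}$ (on the level of input maps, after absorbing the boundary-matrix change described above). Therefore $\range \Phi_{\px{\Lambda}}(T) = H$ if and only if $\range \Phi_0(T) = H$, which by the reformulation of exact controllability in terms of the range of the input map (recalled in Section \ref{sect abst}) gives the claimed equivalence for every $T>0$.

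An alternative, essentially equivalent route is to observe that $A_{\px{\Lambda}} = A_0 + D$ where $D$ is multiplication by the bounded diagonal matrix $\px{\Lambda}$, and to note that the semigroup generated by $A_0 + D$ can be obtained from that of $A_0$ by an explicit diagonal (hence commuting-with-$B$-up-to-an-invertible-factor) cocycle, precisely because $D$ is diagonal with the same support structure as $\Lambda$; this again yields the intertwining of input maps. I would present whichever of these is cleaner, most likely the explicit change of variables $\tilde y = \diag(|\lambda_i|)\, y$, since it makes the computation of $\dom{A_{\px\Lambda}}$ and of $B^*$ transparent and matches the explicit semigroup formulas of Proposition \ref{prop semigp explicit}. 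The only point requiring a little care — and the one I would write out carefully — is the bookkeeping of the boundary matrix: one must check that replacing $Q$ by $\tilde Q = P_+(0)QP_-(0)^{-1}$ is harmless, i.e. that the statement is really about the pair $(\Lambda,Q)$ up to this conjugation and that $\rank$, and later (in the proof of Theorem \ref{thm unp syst}) the indices $c_i$, are unaffected by it; this is immediate from diagonality of $P_\pm(0)$ but should be stated explicitly so that the subsequent reductions are legitimate.
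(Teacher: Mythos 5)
Your approach is essentially the paper's: a pointwise diagonal multiplier $\tilde y(t,x)=\Psi(x)y(t,x)$ that commutes with $\Lambda$ and solves a first-order linear diagonal ODE, with the boundary-control reparametrization absorbed into an invertible $\Gamma=\Psi_-(1)\in\lin{U}$. The paper carries this out in Appendix~\ref{app pert diag} (the proof of Lemma~\ref{prop hyp Rus78} specialized to the diagonal case) and then, in a second step, upgrades the formal change of variables to a rigorous statement about weak solutions by a density argument.

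The one place where you diverge from the paper — and where your version needs to be tightened — is the normalization at $x=0$. The paper imposes $\Psi(0)=\Id_{\R^{n\times n}}$, and this is not a cosmetic choice: it is precisely what makes the boundary matrix $Q$ \emph{unchanged} under the transformation, so that both $(A_0,B)$ and $(A_{\px{\Lambda}},B)$ in the statement carry the \emph{same} $Q$. Your $P(x)=\diag(|\lambda_i(x)|)$ does not satisfy $P(0)=\Id$ in general, so after the change of variable the system has boundary matrix $\tilde Q=P_+(0)QP_-(0)^{-1}$, which is a different object $(A_{\px{\Lambda}},B)$ than the one in the statement. You flag this and argue it is harmless because $\rank$ and the $c_i$ indices are preserved under $Q\mapsto D_1QD_2^{-1}$; that is true, but invoking the $c_i$'s to close the gap implicitly appeals to Theorem~\ref{thm unp syst}, which in the paper's architecture is proved \emph{using} Proposition~\ref{prop stab diag mat}, so the argument as written risks circularity. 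The cleaner fix is exactly the freedom you already note (``up to a nonzero constant per component''): take $\Psi(x)=P(0)^{-1}P(x)$, i.e.\ $\psi_i(x)=|\lambda_i(0)|/|\lambda_i(x)|$ (equivalently, solve the ODE with initial value $\Id$ at $x=0$). Then $\Psi(0)=\Id$, $Q$ is untouched, and no extra invariance lemma is needed. Finally, a small sign slip: the ODE you write, $\Lambda P\,\partial_x P^{-1}=-\px{\Lambda}$, and its solution $P=\diag(|\lambda_i|)$ correspond to the transformation from $A_{\px{\Lambda}}$ back to $A_0$, not from $A_0$ to $A_{\px{\Lambda}}$ as in your displayed derivation; since the conclusion is an ``iff'', this does not affect the result, but the consistent choice for the direction you set up is $P=\diag(1/|\lambda_i|)$, matching the paper's $\Psi$.
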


The proof of Proposition \ref{prop stab diag mat} is a simple change of variable.
It is contained in Appendix \ref{app pert diag}.

\subsection{Sufficient conditions}\label{sect suff cond}

In this part we establish the positive result, that is we assume that $\rank Q=p$ and that $T \geq \max_{i \in \ens{1,\ldots,p}} (T_{p+1}(\Lambda), T_i(\Lambda)+T_{p+c_i(Q)}(\Lambda))$ and we are going to prove that in this case $(A_0,B)$ is exactly controllable in time $T$.
Thanks to Proposition \ref{prop stab diag mat}, it is equivalent to prove the exact controllability of $(A_{\px{\Lambda}},B)$.
Now, to prove that $(A_{\px{\Lambda}},B)$ is exactly controllable in time $T$, we will use the duality and show that there exists $C>0$ such that, for every $z^1 \in L^2(0,1)^n$, we have
\begin{equation}\label{key obs}
\norm{z^1}_{L^2(0,1)^n}^2
\leq 
C\int_0^T \norm{z_-(t,1)}_{\R^m}^2 \, dt,
\end{equation}
where $z \in C^0([0,T];L^2(0,1)^n)$ is the solution to the adjoint system, i.e. $z(t)=S_{A_{\px{\Lambda}}}(T-t)^*z^1$.

In what follows, $C>0$ is a positive constant that may change from line to line but that does not depend on $z^1$.

\begin{enumerate}[1)]

\item
For $j \in \ens{1,\ldots,m}$, since in particular $T \geq T_{p+1}(\Lambda) \geq T_{p+j}(\Lambda)$, using the method of characteristics (see e.g. Figure \ref{figureA} or \eqref{formula semigp 2} with $z^0=z^1$ and $T-t$ in place of $t$), we have
\begin{equation}\label{obs ineq zm}
\norm{z^1_{p+j}}_{L^2(0,1)}^2 \leq C\int_{T-T_{p+j}(\Lambda)}^T \abs{z_{p+j}(t,1)}^2 \, dt.
\end{equation}
These terms are good because it concerns $z_-(t,1)$ (see \eqref{key obs}).
Similarly, for $i \in \ens{1,\ldots,p}$, since $T \geq T_i(\Lambda)$, we have (see e.g. Figure \ref{figureB} or \eqref{formula semigp 1})
\begin{equation}\label{obs ineq zp}
\norm{z^1_i}_{L^2(0,1)}^2 \leq C\int_{T-T_i(\Lambda)}^T \abs{z_i(t,0)}^2 \, dt.
\end{equation}
These terms are not good because it concerns $z_+(t,0)$.
We would like to get ride of it.
The only information that we know about $z_+(t,0)$ is through the boundary condition
\begin{equation}\label{BC equ}
z_-(t,0)=R^*z_+(t,0).
\end{equation}
Since $\rank Q=p$ we also have $\rank R=p$.
Therefore, $R^* \in \R^{m \times p}$ has at least one left-inverse and we can express $z_+(t,0)$ in function of $z_-(t,0)$.
However, we do not really want to completely inverse this relation without looking more closely at it as it will eventually lead to the observability inequality \eqref{key obs} only for times $T$ larger or equal than the time $T_p(\Lambda)+T_{p+1}(\Lambda)$, which is not the minimal one in general.

\item
This is where we use the decomposition of Proposition \ref{Gauss elim}.
According to it, there exist a canonical form $Q^0 \in \R^{p \times m}$ and a lower triangular matrix $L \in \R^{m \times m}$ such that
$$QL=Q^0.$$
As a result, \eqref{BC equ} implies that (we recall that $R=-\Lambda_{+}(0)Q\Lambda_{-}(0)^{-1}$)
\begin{equation}\label{BC equ bis}
(Q^0)^*\Lambda_{+}(0)z_+(t,0)
=-L^*\Lambda_{-}(0)z_-(t,0).
\end{equation}
We now look carefully at this relation row by row for the row indices $c_i(Q)$.
Let $i \in \ens{1,\ldots,p}$ be fixed.
The $c_i(Q)$-th row of \eqref{BC equ bis} is
$$
\sum_{k=1}^p q^0_{k,c_i(Q)}\lambda_k(0)z_k(t,0)=-\sum_{j=1}^m \ell_{j,c_i(Q)} \lambda_{p+j}(0) z_{p+j}(t,0).
$$
Using some of the structural properties of $Q^0$ and $L$, namely, $q^0_{k,c_i(Q)}=0$ for $k>i$ (see \eqref{cond proof direct} in Remark \ref{rem zero dessous}) and $\ell_{i,j}=0$ for $i<j$, this is equivalent to
$$
\sum_{k<i} q^0_{k,c_i(Q)}\lambda_k(0)z_k(t,0)
+q^0_{i,c_i(Q)}\lambda_i(0)z_i(t,0)
=
-\sum_{j \geq c_i(Q)} \ell_{j,c_i(Q)} \lambda_{p+j}(0) z_{p+j}(t,0).
$$
Using now the fact that $q^0_{i,c_i(Q)} \neq 0$, we obtain
\begin{equation}\label{equ zi}
z_i(t,0)=
\frac{1}{q^0_{i,c_i(Q)}\lambda_i(0)}\left(
-\sum_{k<i} q^0_{k,c_i(Q)}\lambda_k(0)z_k(t,0)
-\sum_{j \geq c_i(Q)} \ell_{j,c_i(Q)} \lambda_{p+j}(0) z_{p+j}(t,0)
\right).
\end{equation}
We recall that the goal is to estimate $z_i(t,0)$ on the time interval $(T-T_i(\Lambda),T)$ (see \eqref{obs ineq zp}).
Therefore, we estimate each term in the brackets in \eqref{equ zi} on this interval.

\item
To estimate the first term, we first observe, using the method of characteristics and the boundary condition $z_+(\cdot,1)=0$  (see Figure \ref{figureC} or \eqref{formula semigp 1}), that we have
\begin{equation}\label{BC zuno}
z_k(t,0)=0, \quad \mbox{ a.e. } t \in (0,T-T_k(\Lambda)), \quad \forall k \in \ens{1,\ldots,p},
\end{equation}
so that
$$\int_{T-T_i(\Lambda)}^T \abs{z_k(t,0)}^2 \, dt=\int_{T-T_k(\Lambda)}^T \abs{z_k(t,0)}^2 \, dt,
\quad \forall k \leq i.
$$
Therefore, for the first term on the right-hand side of \eqref{equ zi}, we have
$$
\begin{array}{rl}
\ds \int_{T-T_i(\Lambda)}^T \abs{\sum_{k<i}  q^0_{k,c_i(Q)}\lambda_k(0)z_k(t,0)}^2 \, dt
&\ds  \leq C \int_{T-T_i(\Lambda)}^T \sum_{k<i}  \abs{z_k(t,0)}^2 \, dt \\
&\ds  = C \sum_{k<i}  \int_{T-T_i(\Lambda)}^T \abs{z_k(t,0)}^2 \, dt \\
&\ds  = C \sum_{k<i}  \int_{T-T_k(\Lambda)}^T \abs{z_k(t,0)}^2 \, dt.
\end{array}
$$
The important point is that it is estimated by a similar expression to the one we want to estimate but that contains only terms for $k<i$.

\item
Let us now estimate the second term.
This is where we finally use the assumption on the time $T$.
This assumption says that $T-T_i(\Lambda) \geq T_{p+j}(\Lambda)$ for every $j \geq c_i(Q)$.
Thus,
$$
\int_{T-T_i(\Lambda)}^T \abs{z_{p+j}(t,0)}^2 \, dt
\leq \int_{T_{p+j}(\Lambda)}^T \abs{z_{p+j}(t,0)}^2 \, dt,
\quad \forall j \geq c_i(Q).
$$
On the other hand, using the method of characteristics (see Figure \ref{figureD} or \eqref{formula semigp 2}), we see that
\begin{equation}\label{estim zpj}
\int_{T_{p+j}(\Lambda)}^T \abs{z_{p+j}(t,0)}^2 \, dt
\leq C \int_0^{T-T_{p+j}(\Lambda)} \abs{z_{p+j}(t,1)}^2 \, dt,
\quad \forall j \in \ens{1,\ldots,m}.
\end{equation}
As a result,
$$\int_{T-T_i(\Lambda)}^T \abs{z_{p+j}(t,0)}^2 \, dt 
\leq C  \int_{0}^{T-T_{p+j}(\Lambda)} \abs{z_{p+j}(t,1)}^2 \, dt,
\quad \forall j \geq c_i(Q).
$$
Therefore, for the second term on the right-hand side of \eqref{equ zi}, we have
$$
\begin{array}{rl}
\ds \int_{T-T_i(\Lambda)}^T \abs{\sum_{j \geq c_i(Q)} \ell_{j,c_i(Q)} \lambda_{p+j}(0) z_{p+j}(t,0)}^2 \, dt
&\ds  \leq C \int_{T-T_i(\Lambda)}^T \sum_{j \geq c_i(Q)} \abs{z_{p+j}(t,0)}^2 \, dt \\
&\ds  = C \sum_{j \geq c_i(Q)}  \int_{T-T_i(\Lambda)}^T \abs{z_{p+j}(t,0)}^2 \, dt \\
&\ds  \leq C \sum_{j \geq c_i(Q)}  \int_{0}^{T-T_{p+j}(\Lambda)} \abs{z_{p+j}(t,1)}^2 \, dt \\
&\ds \leq C\int_0^T \norm{z_-(t,1)}_{\R^m}^2 \, dt.
\end{array}
$$

\item
To summarize, we have obtained the following estimate, valid for every $i \in \ens{1,\ldots,p}$:
$$
\int_{T-T_i(\Lambda)}^T \abs{z_i(t,0)}^2 \, dt
\leq 
C\sum_{k<i}  \int_{T-T_k(\Lambda)}^T \abs{z_k(t,0)}^2 \, dt
+ C\int_0^T \norm{z_-(t,1)}_{\R^m}^2 \, dt.
$$

By induction (starting with $i=1$) we easily deduce that, for every $i \in \ens{1,\ldots,p}$,
$$
\int_{T-T_i(\Lambda)}^T \abs{z_i(t,0)}^2 \, dt
\leq 
C\int_0^T \norm{z_-(t,1)}_{\R^m}^2 \, dt.
$$
Combined with \eqref{obs ineq zp} and \eqref{obs ineq zm} this establishes \eqref{key obs} and conclude the proof of the positive result.
\qed

\end{enumerate}

\begin{minipage}[c]{.46\linewidth}
\centering

\scalebox{0.75}{
\begin{tikzpicture}[scale=2]
\fill [gray!10]
	(0,0) rectangle (1,2.5);

\fill [white, domain=0:1, variable=\x]
	plot ({\x}, {2.5-(exp(2*\x)-1)/6})
	--(1,0)--(0,0);

\draw [<->] (0,3) node [left]  {$t$} -- (0,0) -- (1.5,0) node [below right] {$x$};
\draw (0,0) rectangle (1,2.5);
\draw (-0.1,2.5) node [left] {$T$};
\draw (0,0) node [left, below] {$0$};
\draw (1,0) node [below] {$1$};

\draw[decoration={brace, amplitude=6pt},decorate]
(0.05,2.55) -- node[above=4pt] {$z^1_{p+j}(x)$} (0.95,2.55);

\draw[decoration={brace, amplitude=6pt,mirror},decorate]
(1.05,1.49) -- node[right=4pt] {$z_{p+j}(t,1)$} (1.05,2.45);
    
\draw[ultra thick, domain=0:1.10] plot (\x, {2.5-(exp(2*\x)-1)/6});

\draw[dashed] (0,1.44) node [left] {$T-T_{p+j}(\Lambda)$} -- (1,1.44);
      \end{tikzpicture}
}

\captionof{figure}{Control of $z^1_{p+j}$ by $z_{p+j}(\cdot,1)$}\label{figureA}
\end{minipage}
\begin{minipage}[c]{.46\linewidth}
\centering
\scalebox{0.75}{
\begin{tikzpicture}[scale=2]
\fill [gray!10]
	(0,0) rectangle (1,2.5);

\fill [white, domain=0:1, variable=\x]
	plot ({\x}, {2.5+(exp(\x)-exp(1))/2})
	--(1,0)--(0,0);

\draw [<->] (0,3) node [left]  {$t$} -- (0,0) -- (1.5,0) node [below right] {$x$};
\draw (0,0) rectangle (1,2.5);
\draw (-0.1,2.5) node [left] {$T$};
\draw (0,0) node [left, below] {$0$};
\draw (1,0) node [below] {$1$};

\draw[decoration={brace, amplitude=6pt},decorate]
(0.05,2.55) -- node[above=4pt] {$z^1_i(x)$} (0.95,2.55);

\draw[decoration={brace, amplitude=6pt},decorate]
(-0.05,1.69) -- node[left=4pt] {$z_i(t,0)$} (-0.05,2.45);
    
\draw[ultra thick, domain=-0.35:1] plot (\x, {2.5+(exp(\x)-exp(1))/2});

\draw[dashed] (0,1.64) -- (1,1.64) node [right] {$T-T_i(\Lambda)$};
      \end{tikzpicture}
      }

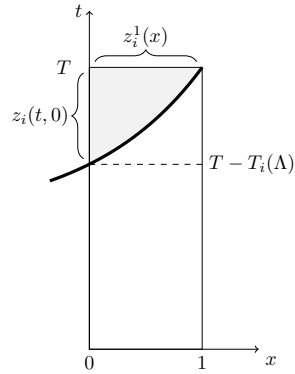
\captionof{figure}{Control of $z^1_i$ by $z_i(\cdot,0)$}\label{figureB}
\end{minipage}

\begin{minipage}[c]{.46\linewidth}
\centering

\scalebox{0.75}{
\begin{tikzpicture}[scale=2]

\fill[gray!10, domain=0:1, variable=\x]
	plot ({\x}, {2.5+(exp(\x)-exp(1))/3})
	--(1,0)--(0,0);
\fill[white, domain=0:1, variable=\x]
	plot ({\x}, {(exp(\x)-1)/3})
	--(1,0)--(0,0);

\draw [<->] (0,3) node [left]  {$t$} -- (0,0) -- (1.5,0) node [below right] {$x$};
\draw (0,0) rectangle (1,2.5);
\draw (-0.1,2.5) node [left] {$T$};
\draw (0,0) node [left, below] {$0$};
\draw (1,0) node [below] {$1$};

\draw[ultra thick, domain=0:1.25] plot (\x, {2.5+(exp(\x)-exp(1))/3});
\draw[ultra thick, domain=0:1.25] plot (\x, {(exp(\x)-1)/3});

\draw (-0.1,1.93) node [left] {$T-T_k(\Lambda)$};
\draw[decoration={brace, amplitude=6pt},decorate]
(-0.05,0.05) -- node[left=4pt] {$z_k(t,0)$} (-0.05,1.88);

\draw[decoration={brace, amplitude=6pt,mirror},decorate]
(1.05,0.67) -- node[right=4pt] {$z_k(t,1)$} (1.05,2.45);

      \end{tikzpicture}
}

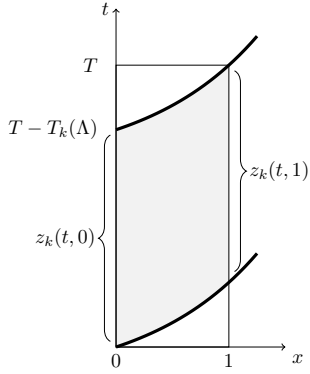
\captionof{figure}{Control of $z_k(\cdot,0)$ by $z_k(\cdot,1)$}\label{figureC}

\end{minipage}
\begin{minipage}[c]{.46\linewidth}
\centering

\scalebox{0.75}{
\begin{tikzpicture}[scale=2]

\fill[gray!10, domain=0:1, variable=\x]
	plot ({\x}, {2.5-(exp(2*\x)-1)/6})
	--(1,0)--(0,0);
\fill[white, domain=0:1, variable=\x]
	plot ({\x}, {(exp(2)-exp(2*\x))/6})
	--(1,0)--(0,0);

\draw [<->] (0,3) node [left]  {$t$} -- (0,0) -- (1.5,0) node [below right] {$x$};
\draw (0,0) rectangle (1,2.5);
\draw (-0.1,2.5) node [left] {$T$};
\draw (0,0) node [left, below] {$0$};
\draw (0.90,0) node [below] {$1$};

\draw[ultra thick, domain=0:1.05] plot (\x, {2.5-(exp(2*\x)-1)/6});
\draw[ultra thick, domain=0:1.05] plot (\x, {(exp(2)-exp(2*\x))/6});

\draw[decoration={brace, amplitude=4pt,mirror},decorate]
(1.05,0.05) -- node[right=4pt] {$z_{p+j}(t,1)$} (1.05,1.29);
\draw (1.1,1.44) node [right] {$T-T_{p+j}(\Lambda)$};

\draw[decoration={brace, amplitude=6pt},decorate]
(-0.05,1.11) -- node[left=4pt] {$z_{p+j}(t,0)$} (-0.05,2.45);
\draw (-0.1,1.06) node [left] {$T_{p+j}(\Lambda)$};


      \end{tikzpicture}
      }

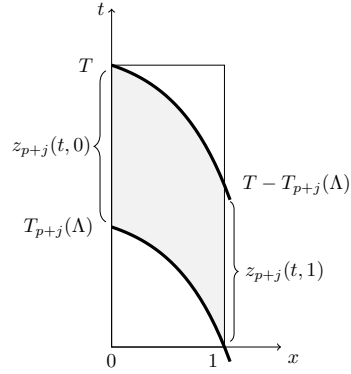
\captionof{figure}{Control of $z_{p+j}(\cdot,0)$ by $z_{p+j}(\cdot,1)$}\label{figureD}
\end{minipage}

\subsection{Necessary conditions}

We now turn out to the proof of the negative result, that is we assume that $(A_0,B)$ is exactly controllable in a time $T>0$ and we show that both conditions \ref{thm unp syst i1} and \ref{thm unp syst i2} of Theorem \ref{thm unp syst} necessary hold.
In both cases we argue by contraposition.

\begin{enumerate}[1)]
\item
First we show that, if $\rank Q<p$, then $(A_{\px{\Lambda}},B)$ is not even approximately controllable in time $T$ for any $T>0$.
To this end, we use the duality and show that there exists $z^1 \in L^2(0,1)^n$ such that
$$z_-(t,1)=0, \quad \mbox{ a.e. } t \in (0,T), \qquad z^1 \neq 0,$$
where as usual $z \in C^0([0,T];L^2(0,1)^n)$ is the solution to the adjoint system, i.e. $z(t)=S_{A_{\px{\Lambda}}}(T-t)^*z^1$.
Let then $T>0$ be fixed.
Since $\rank R=\rank Q$, by assumption, there exists $\eta \in \R^p$ such that
$$R^*\eta=0, \quad \eta \neq 0.$$
Let us then define $z^1 \in L^2(0,1)^n$ for every $x \in (0,1)$ by
$$
z^1_i(x)=
\begin{cases}
\eta_i & \mbox{ if } x \in \left(0,\phi_i^{-1}\left(T_1(\Lambda)\right)\right),\\
0 & \mbox{ otherwise, }
\end{cases}
\quad \forall i \in \ens{1,\ldots,p},
\qquad
z^1_-(x)=0.$$
Note that it is well-defined since $T_1(\Lambda)=\phi_1(1) \leq \phi_i(1)$ for every $i \in \ens{1,\ldots,p}$.
Let $z \in C^0([0,T];L^2(0,1)^n)$ be the solution to the adjoint system corresponding to this data.
Using the method of characteristics and the boundary condition $z_+(\cdot,1)=0$ (see Figure \ref{figureE} or \eqref{formula semigp 1}), we have
$$
\forall i \in \ens{1,\ldots,p}, \quad
z_i(t,0)=
\begin{cases}
\eta_i & \mbox{ if } t \in \left(T-T_1(\Lambda),T\right) \mbox{ and } t>0,\\
0 & \mbox{ otherwise, }
\end{cases}
$$
so that
\begin{equation}\label{rstar zero}
R^*z_+(t,0)=0, \quad \mbox{ a.e. } t \in (0,T).
\end{equation}
Since $z_-^1=0$ and $z_-(t,0)=R^*z_+(t,0)=0$, it follows that $z_-=0$ (see \eqref{formula semigp 2}).
In particular, $z_-(t,1)=0$ a.e. $t \in (0,T)$.
Since it is clear that $z^1 \neq 0$, this shows that $(A_{\px{\Lambda}},B)$ is not approximately controllable in time $T$ for any $T>0$ if $\rank Q<p$.

\item
Let us now prove the necessity of \ref{thm unp syst i2}.
We assume that $\rank Q=p$ but
$$T<\max_{i \in \ens{1,\ldots,p}} (T_{p+1}(\Lambda), T_i(\Lambda)+T_{p+c_i(Q)}(\Lambda)).$$
Let us show that, in this case, the system $(A_{\px{\Lambda}},B)$ is not even approximately null controllable in time $T$.
To this end, we use the duality and show that there exists $z^1 \in L^2(0,1)^n$ such that
$$z_-(t,1)=0, \quad \mbox{ a.e. } t \in (0,T), \qquad z(0,\cdot) \neq 0.$$

\item
First of all, we can always assume that
$$T \geq \max \left(T_p(\Lambda),T_{p+1}(\Lambda)\right).$$
Indeed, if $T<T_p(\Lambda)$, we define $z^1 \in L^2(0,1)^n$ for every $x \in (0,1)$ by
$$
z^1_i(x)=0, \quad \forall i \in \ens{1,\ldots,p-1},
\qquad
z^1_p(x)=
\begin{cases}
0 & \mbox{ if } x \in \left(0,\phi_p^{-1}(T)\right),\\
1 & \mbox{ if } x \in \left(\phi_p^{-1}(T),1\right),
\end{cases}
\qquad
z^1_-(x)=0.
$$
Let $z \in C^0([0,T];L^2(0,1)^n)$ be the solution to the adjoint system corresponding to this data.
The method of characteristics (see Figure \ref{figureF} or \eqref{formula semigp 1}) shows that
$$
\left\{\begin{array}{l}
\ds z_+(t,0)=0, \quad \mbox{ a.e. } t \in (0,T), \\
\ds z_p(0,x)=1, \quad \mbox{ a.e. } x \in (0,\phi_p^{-1}(T_p(\Lambda)-T)).
\end{array}\right.
$$
Since we have again \eqref{rstar zero}, we conclude as before that $z_-(t,1)=0$ a.e. $t \in (0,T)$ (but $z(0,\cdot) \neq 0$).

On the other hand, if $T <T_{p+1}(\Lambda)$, then we use $z^1 \in L^2(0,1)^n$ defined for every $x \in (0,1)$ by
$$
z^1_+(x)=0,
\qquad
z^1_{p+1}(x)=
\begin{cases}
1 & \mbox{ if } x \in \left(0,\phi_{p+1}^{-1}(T_{p+1}(\Lambda)-T)\right),\\
0 & \mbox{ if } x \in \left(\phi_{p+1}^{-1}(T_{p+1}(\Lambda)-T),1\right),
\end{cases}
\qquad
z^1_{p+j}(x)=0, \quad \forall j \geq 2.
$$
It is not difficult to see that $z_+=0$ and $z_-(\cdot,1)=0$ but $z_{p+1}(0,\cdot) \neq 0$.

\item
From now on, let $i_0 \in \ens{1,\ldots,p}$ be fixed such that
$$T_{i_0}(\Lambda)+T_{p+c_{i_0}(Q)}(\Lambda)
=\max_{i \in \ens{1,\ldots,p}} (T_i(\Lambda)+T_{p+c_i(Q)}(\Lambda)).$$
Let us now construct the final data $z^1 \in L^2(0,1)^n$ for which the controllability will fail.
We refer to Figure \ref{figureG} to clarify the geometric situation.
To explain the construction of such a data, we first observe some necessary conditions.
First of all, we point out that the three first steps in the proof of the sufficient part of Theorem \ref{thm unp syst} and the estimate \eqref{estim zpj} (Section \ref{sect suff cond}) only used the fact that $T \geq T_{p+1}(\Lambda)$ and $T \geq T_p(\Lambda)$, which can always be assumed as we have seen in the previous step.
In particular, if we aim to prove that
\begin{equation}\label{non AC}
z_-(t,1)=0, \quad \mbox{ a.e. } t \in (0,T),
\end{equation}
we see from \eqref{obs ineq zm} and \eqref{estim zpj} (see also Figures \ref{figureA} and \ref{figureD}) that it is necessary that
$$
\left\{\begin{array}{l}
\ds z^1_-(x)=0, \quad  \mbox{ a.e. } x \in (0,1), \\
\ds z_{p+j}(t,0)=0, \quad  \mbox{ a.e. } t \in (T_{p+j}(\Lambda),T).
\end{array}\right.
$$
In particular, the function $z_{p+j}(\cdot,0)$ is of the form
\begin{equation}\label{CN zpj tzero}
z_{p+j}(t,0)=
\begin{cases}
0 & \mbox{ if } t \in (T_{p+j}(\Lambda),T), \\
\alpha_{p+j}(t) & \mbox{ if } t \in (0,T_{p+j}(\Lambda)),
\end{cases}
\end{equation}
for some $\alpha_{p+j} \in L^2(0,T_{p+j}(\Lambda))$ to be determined below, which will then also define the value of $z_{p+j}(0,\cdot)$.
On the other hand, we recall from \eqref{BC zuno} that it is necessary that
$$z_i(t,0)=0, \quad \mbox{ a.e. } t \in (0,T-T_i(\Lambda)).$$
Thus, the function $z_i(\cdot,0)$ is of the form
\begin{equation}\label{CN zi tzero}
z_i(t,0)=
\begin{cases}
\alpha_i(t) & \mbox{ if } t \in (T-T_i(\Lambda),T), \\
0 & \mbox{ if } t \in (0,T-T_i(\Lambda)),
\end{cases}
\end{equation}
for some $\alpha_i \in L^2(T-T_i(\Lambda),T)$ to be determined below, which will then also define the value of $z^1_i$.

\item
Thanks to the assumption $T<T_{i_0}(\Lambda)+T_{p+c_{i_0}(Q)}(\Lambda)$, we see that the intervals $(0,T_{p+c_{i_0}(Q)}(\Lambda))$ and $(T-T_{i_0}(\Lambda),T)$ intersect each other (see e.g. Figure \ref{figureG}).
We thus propose to look for $\alpha_{p+j}$ and $\alpha_i$ as piecewise constant functions as follows:
$$
\forall k \in \ens{1,\ldots,n},
\quad
\alpha_k(t)=
\begin{cases}
\alpha_k  & \text{ if } T-T_{i_0}(\Lambda)<t<T_{p+c_{i_0}(Q)}(\Lambda),\\
0 & \text{ otherwise, }
\end{cases}
$$
for some $\alpha_k \in \R$ to be determined below such that
\begin{equation}\label{alpha pj zero}
\alpha_{p+j}=0, \quad \forall j>c_{i_0}(Q),
\end{equation}
(in order that $\alpha_{p+j}(t)=0$ if $t \in (T_{p+j}(\Lambda),T_{p+c_{i_0}(Q)}(\Lambda))$ when $j>c_{i_0}(Q)$, to be compatible with \eqref{CN zpj tzero}), and such that
\begin{equation}\label{alpha i zero}
\alpha_i=0, \quad \forall i<i_0,
\end{equation}
(in order that $\alpha_i(t)=0$ if $t \in (T-T_{i_0}(\Lambda),T-T_i(\Lambda))$ when $i<i_0$,  to be compatible with \eqref{CN zi tzero}).
In particular, the expressions \eqref{CN zpj tzero} and \eqref{CN zi tzero} now become of the same form:
$$
\forall k \in \ens{1,\ldots,n},
\quad
z_k(t,0)=
\begin{cases}
\alpha_k  & \text{ if } T-T_{i_0}(\Lambda)<t<T_{p+c_{i_0}(Q)}(\Lambda),\\
0 & \text{ otherwise. }
\end{cases}
$$
Let us denote $\alpha_+=(\alpha_1,\ldots,\alpha_p) \in \R^p$ and $\alpha_-=(\alpha_{p+1},\ldots,\alpha_n) \in \R^m$.
Since the time interval does not depend on the index $k$, it is clear that the boundary condition $z_-(t,0)=R^*z_+(t,0)$ is equivalent to
\begin{equation}\label{BC reform}
\alpha_-=R^*\alpha_+.
\end{equation}
We thus define $\alpha_-$ by this equation.
Let us now define $\alpha_+$ such that \eqref{alpha pj zero} and \eqref{alpha i zero} are satisfied.

\item
By definition \eqref{def R} of $R$ and factorization of $Q=Q^0 L^{-1}$, \eqref{BC reform} is equivalent to
$$\alpha_-=-\Lambda_{-}(0)^{-1}\left(L^*\right)^{-1}(Q^0)^*\Lambda_{+}(0)\alpha_+.$$
Let $\beta \in \R^m$ be defined by
\begin{equation}\label{def beta}
\beta=(Q^0)^*\Lambda_{+}(0)\alpha_+,
\end{equation}
so that
$$\alpha_-=-\Lambda_{-}(0)^{-1}\left(L^*\right)^{-1}\beta.$$
Since $L^*$ is an upper triangular matrix, we see that \eqref{alpha pj zero} holds if we have
\begin{equation}\label{prop beta}
\beta_j=0, \quad \forall j>c_{i_0}(Q).
\end{equation}
First of all, since $\beta_j=\sum_{k=1}^p q^0_{k,j}\lambda_k(0)\alpha_k$ and $q^0_{k,j}=0$ if $j \not\in \ens{c_1(Q),\ldots,c_p(Q)}$ (see Remark \ref{rem zero dessous}), we see that (whatever $\alpha_+$ is)
$$\beta_j=0, \quad \forall j \not\in \ens{c_1(Q),\ldots,c_p(Q)}.$$
Let us now look at the identity \eqref{def beta} for the row indices $c_i(Q)$.
Using the property \eqref{cond proof direct}, we see that
\begin{equation}\label{beta sur ci}
\beta_{c_i(Q)}=\sum_{k<i}q^0_{k,c_i(Q)}\lambda_k(0)\alpha_k+q^0_{i,c_i(Q)}\lambda_i(0)\alpha_i.
\end{equation}
Thus, we see that the following $\alpha_+ \in \R^p$ has all the desired properties:
\begin{equation}\label{def alphai}
\alpha_i=
\begin{cases}
0 & \mbox{ if } i \in \ens{1,\ldots,i_0-1}, \\
1 & \mbox{ if } i=i_0, \\
\ds \frac{-1}{q^0_{i, c_i(Q)}\lambda_i(0)}\sum_{k<i} q^0_{k, c_i(Q)}\lambda_k(0)\alpha_k & \mbox{ if } i \in \ens{i_0+1,\ldots,p}.
\end{cases}
\end{equation}
Note in addition that, using the properties of $L^*$, the property \eqref{prop beta} and \eqref{beta sur ci}, we have
\begin{equation}\label{alpha m notzero}
\begin{array}{rl}
\alpha_{p+c_{i_0}(Q)} &\ds =-\frac{1}{\lambda_{p+c_{i_0}(Q)}} \beta_{c_{i_0}(Q)} \\
&\ds =-\frac{1}{\lambda_{p+c_{i_0}(Q)}}q^0_{i_0,c_{i_0}(Q)}\lambda_{i_0}(0) \neq 0.
\end{array}
\end{equation}

\item
As a result, we define $z^1 \in L^2(0,1)^n$ for every $x \in (0,1)$ by
$$
z^1_i(x)=
\left\{\begin{array}{ll}
\alpha_i & \mbox{ if } i \in \ens{i_0,\ldots,p} \mbox{ and } \phi_i^{-1}\left(T-T_{p+c_{i_0}(Q)}(\Lambda)\right)<x<\phi_i^{-1}\left(T_{i_0}(\Lambda)\right), \\
0 & \mbox{ otherwise, }
\end{array}\right.
$$
where $\alpha_i$ is given by \eqref{def alphai}.
Note that $z^1$ is well-defined since $0 \leq T-T_{p+c_{i_0}(Q)}(\Lambda)<T_{i_0}(\Lambda)$ by assumption and since $T_{i_0}(\Lambda)=\phi_{i_0}(1) \leq \phi_i(1)$ for $i \in \ens{i_0,\ldots,p}$.
Using the method of characteristic (see \eqref{formula semigp 2}), we can also check that
$$z_{p+c_{i_0}(Q)}(0,x)=\alpha_{p+c_{i_0}(Q)}, \quad \mbox{ a.e. } x \in (\phi_{p+c_{i_0}(Q)}^{-1}(T-T_{i_0}(\Lambda)),1),$$
so that $z_{p+c_{i_0}(Q)}(0,\cdot) \neq 0$ by the computations \eqref{alpha m notzero}.
Finally, this data $z^1$ has been constructed in such a way that $z_-(\cdot,1)=0$ a.e. in $(0,T)$ but since $z(0,\cdot) \neq 0$ we see that the system $(A_{\px{\Lambda}},B)$ is not approximately null controllable in time $T$.
\qed

\end{enumerate}

\begin{minipage}[c]{.46\linewidth}
\centering

\scalebox{0.75}{
\begin{tikzpicture}[scale=2]

\fill[gray!10, domain=0:1, variable=\x]
	plot ({\x}, {2.5+(exp(\x)-exp(1))/3})
	--(1,0)--(0,0);
	
	\fill[white]
	(0,2.5) rectangle (1,3);
      
\draw [<->] (0,3) node [left]  {$t$} -- (0,0) -- (1.5,0) node [below right] {$x$};
\draw (0,0) rectangle (1,2.5);
\draw (-0.1,2.5) node [left] {$T$};
\draw (0,0) node [left, below] {$0$};
\draw (1,0) node [below] {$1$};

\draw[ultra thick, domain=0:1] plot (\x, {1.93+(exp(\x)-1)/2});
\draw[dashed, domain=0:1.15] plot (\x, {2.5+(exp(\x)-exp(1))/3});

\draw[decoration={brace, amplitude=6pt},decorate]
(0.05,2.55) -- node[above=4pt] {$z^1_+ \neq 0$} (0.75,2.55);

\draw (-0.1,1.93) node [left] {$T-T_1(\Lambda)$};

\draw[decoration={brace, amplitude=6pt},decorate]
(-0.05,0.05) -- node[left=4pt] {$R^*z_+(\cdot,0)=0$} (-0.05,2.45);


      \end{tikzpicture}
}

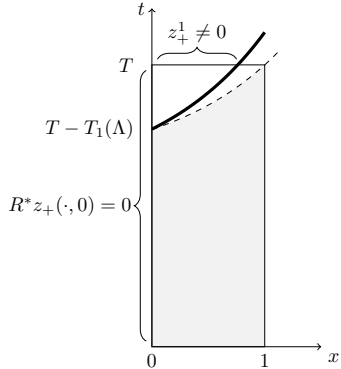
\captionof{figure}{Counterexample if $\rank Q<p$}\label{figureE}

\end{minipage}
\begin{minipage}[c]{.46\linewidth}
\centering

\scalebox{0.75}{
\begin{tikzpicture}[scale=4]

\fill[gray!10, domain=0.5:1, variable=\x]
	plot ({\x}, {1/2+(exp(\x)-exp(1))/2})
	--(1,0)--(0,0);

\draw [<->] (0,1) node [left]  {$t$} -- (0,0) -- (1.5,0) node [below right] {$x$};
\draw[dashed] (0,0) rectangle (1,0.86);
\draw (0,0) rectangle (1,0.5);
\draw (-0.1,0.86) node [left] {$T_p(\Lambda)$};
\draw (-0.1,0.50) node [left] {$T$};
\draw (0,0) node [left, below] {$0$};
\draw (1,0) node [below] {$1$};

\draw[ultra thick, domain=0:1.05] plot (\x, {(exp(\x)-1)/2});
\draw[ultra thick, domain=0.525:1.1] plot (\x, {1/2+(exp(\x)-exp(1))/2});

\draw[decoration={brace, amplitude=6pt},decorate]
(-0.05,0.05) -- node[left=4pt] {$z_+(\cdot,0)=0$} (-0.05,0.45);

\draw[decoration={brace, mirror, amplitude=4pt},decorate]
(0.05,-0.05) -- node[below=4pt] {$z_+(0,\cdot) \neq 0$} (0.475,-0.05);


      \end{tikzpicture}
      }

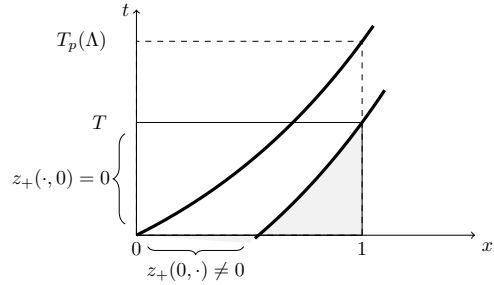
\captionof{figure}{Counterexample if $T<T_p(\Lambda)$}\label{figureF}
\end{minipage}

\begin{center}
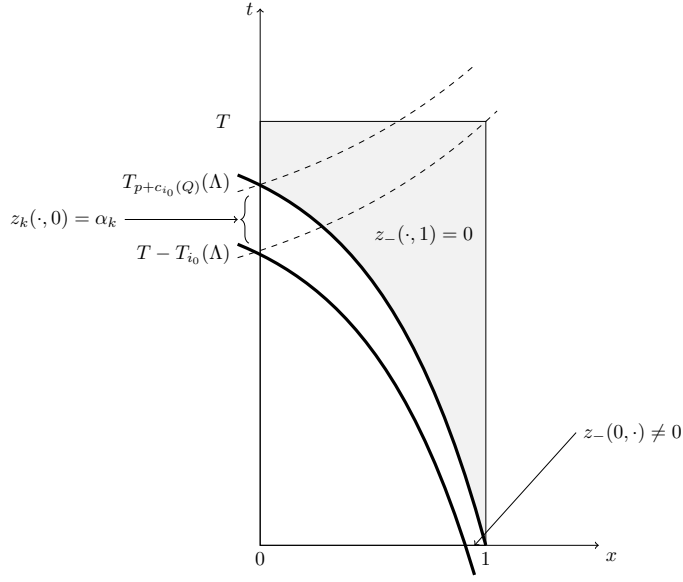

      
\scalebox{0.75}{
\begin{tikzpicture}[scale=4]

	\fill[gray!10]
	(0,0) rectangle (1,1.88);

\fill[white, domain=0:1, variable=\x]
	plot ({\x}, {(exp(2)-exp(2*\x))/4})
	--(1,0)--(0,0);

\draw [<->] (0,2.38) node [left]  {$t$} -- (0,0) -- (1.5,0) node [below right] {$x$};

\draw (0,0) rectangle (1,1.88);
\draw (-0.1,1.60) node [left] {$T_{p+c_{i_0}(Q)}(\Lambda)$};
\draw (-0.1,1.29) node [left] {$T-T_{i_0}(\Lambda)$};
\draw (-0.1,1.88) node [left] {$T$};
\draw (0,0) node [left, below] {$0$};
\draw (1,0) node [below] {$1$};

\draw (0.725,1.3) node [above] {$z_-(\cdot,1)=0$};

\draw[dashed, domain=-0.1:1.05] plot (\x, {1.88+(exp(\x)-exp(1))/3});
\draw[ultra thick, domain=-0.1:1] plot (\x, {(exp(2)-exp(2*\x))/4});

\draw[dashed, domain=-0.1:0.95] plot (\x, {1.60+(exp(\x)-1)/3});
\draw[ultra thick, domain=-0.1:0.95] plot (\x, {1.29-(exp(2*\x)-1)/4});

\draw[decoration={brace, amplitude=4pt},decorate]
(-0.05,1.34) --  (-0.05,1.55);

\draw[<-] (-0.1,1.445) -- (-0.6,1.445) node[left] {$z_k(\cdot,0)=\alpha_k$};

\draw[<-] (0.95,0)  -- (1.4,0.5) node[right] {$z_-(0,\cdot) \neq 0$};


      \end{tikzpicture}
      }
\captionof{figure}{Counterexample if $T<T_{i_0}(\Lambda)+T_{p+c_{i_0}(Q)}(\Lambda)$}\label{figureG}

\end{center}

\section{Stability of the minimal time of control}\label{sect cpct uniq}

In this section we show that the internal coupling term $M$ in \eqref{syst} has almost no impact on the exact controllability properties of \eqref{syst} and that it can be completely removed without affecting the minimal time of control.
More precisely, the goal of this section is to establish the following perturbation result:

\begin{theorem}\label{thm M to zero}
For every $\Lambda \in C^{0,1}([0,1])^{n \times n}$ that satisfies \eqref{Lambda diag}, \eqref{hyp speeds} and \eqref{hyp Rus78 eg}, $Q \in \R^{p\times m}$ and $M \in L^{\infty}(0,1)^{n \times n}$, we have
\begin{equation}\label{stab Topt}
\Topt{M}=\Topt{0}.
\end{equation}
\end{theorem}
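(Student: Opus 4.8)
The plan is to prove the two inequalities $\Topt{M} \leq \Topt{0}$ and $\Topt{0} \leq \Topt{M}$ by a compactness-uniqueness argument applied to the adjoint systems. Since exact controllability is equivalent to the observability inequality \eqref{obs}, it suffices to show that the observability inequality for $(A_M,B)$ in a time $T$ is equivalent to the one for $(A_0,B)$ in the same time $T$, at least when $T$ is strictly larger than the common ``minimal time'' $\TCN$-type quantity; the remaining endpoint case, and the fact that the infimum defining $\Topt{}$ is not affected, follows from the characterization of $\Topt{}$ via the two bullet properties stated after \eqref{def Topt}. By the symmetry of the situation (one can pass from $M$ to $0$ or from $0$ to $M$ by adding a bounded perturbation $\mp M$), it is in fact enough to prove one direction, say: if $(A_0,B)$ is exactly controllable in some time $T$, then so is $(A_M,B)$ in the same time $T$.

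The key step is the following perturbation lemma: fix $T > \Topt{0} = T_c$ (using Theorem \ref{thm unp syst} and Remark \ref{rem time Wec82} for the value), and suppose $(A_0,B)$ is exactly controllable in time $T$; I claim $(A_M,B)$ is too. The standard compactness-uniqueness scheme runs as follows. First, by the admissibility of $B$ for $A_M$ and a Duhamel-type comparison between the semigroups $S_{A_M}$ and $S_{A_0}$ (which differ by the bounded perturbation $M$, so $\Phi_M(T)^* - \Phi_0(T)^*$ is a compact operator on $H$ — this uses that the difference involves an integral of $S(T-t)$ against a bounded operator, smoothing in the relevant sense, or more precisely the Aubin–Lions type compactness coming from the gain of regularity along characteristics), one reduces the would-be observability inequality for $(A_M,B)$ to a weak observability inequality of the form $\|z^1\|_H^2 \leq C\int_0^T \|\Phi_M(T)^*z^1\|_U^2\,dt + C\|z^1\|_{H_{-1}}^2$, with a compact lower-order term. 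Then one argues by contradiction: if the clean observability inequality fails, one extracts a normalized sequence $z^1_k$ with $\|z^1_k\|_H = 1$, $\int_0^T\|\Phi_M(T)^*z^1_k\|_U^2 \to 0$; the compact term forces (a subsequence of) $z^1_k$ to converge strongly to some $z^1$ with $\|z^1\|_H = 1$ and $\Phi_M(T)^*z^1 = 0$, i.e. $z^1$ lies in the unobservable subspace $N_T(A_M)$. It remains to show $N_T(A_M) = \{0\}$ when $T > T_c$, which is exactly a unique continuation statement for the adjoint hyperbolic system with the internal coupling $M$.

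The unique continuation step is where the assumption \eqref{hyp Rus78 eg} and the structure of $T_c$ enter. One must show: if $z$ solves the adjoint system for $(A_M,B)$ on $(0,T)$ with $T > T_c$ and $z_-(t,1) = 0$ a.e. on $(0,T)$, then $z \equiv 0$. I would do this by a Holmgren-type / characteristics argument propagating the zero boundary data inward: the vanishing of $z_-(\cdot,1)$ together with the transport structure forces $z_-$ to vanish in a space-time region that, because $T$ exceeds the relevant sums $T_i(\Lambda) + T_{p+c_i(Q)}(\Lambda)$, reaches the boundary $x=0$; the boundary condition $z_-(\cdot,0) = R^*z_+(\cdot,0)$ then, via the canonical $UL$-decomposition of $Q$ exactly as in the sufficiency proof of Theorem \ref{thm unp syst} (Section \ref{sect suff cond}, steps 2–5), lets one successively conclude $z_i(\cdot,0) = 0$ for $i = 1, \ldots, p$ on the appropriate time intervals, hence $z_+ \equiv 0$ and then $z_- \equiv 0$. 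The presence of $M$ only couples the equations through a bounded zeroth-order term, which does not change the domains of dependence, so the geometric argument of Section \ref{sect suff cond} carries over verbatim after replacing the explicit semigroup formulas by the energy/characteristic estimates valid for the perturbed system; assumption \eqref{hyp Rus78 eg} guarantees that eigenvalues which coincide at one point coincide everywhere, so the characteristic lines do not cross and the transport structure is globally well-behaved. Alternatively — and this is likely the cleanest route — one invokes a general result (in the spirit of \cite{DO18} or \cite{NRL86}) stating that for this class of systems the unobservable space $N_T(A_M)$ is invariant under the relevant dynamics and finite-dimensional, so that the compactness-uniqueness argument only needs to rule out nonzero stationary-type solutions, which is again handled by the characteristics-plus-boundary-condition computation.

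I expect the main obstacle to be the unique continuation / triviality of $N_T(A_M)$ step: with a general $L^\infty$ matrix $M$ one cannot use the explicit semigroup formulas \eqref{formula semigp 1}–\eqref{formula semigp 2} of Proposition \ref{prop semigp explicit}, so the propagation of zeros must be justified purely from the transport equation with a bounded source, and one must be careful that the ``time budget'' $T > T_c$ is exactly enough — in particular that the strict inequality (versus $T = T_c$) is what makes the open intervals overlap and the induction in the canonical decomposition close. A secondary technical point is establishing the compactness of the difference operator $\Phi_M(T)^* - \Phi_0(T)^*$ rigorously at the $L^2$ level; this should follow from the admissibility theory recalled in Section \ref{sect abst} together with the smoothing along characteristics, but it needs to be stated carefully (e.g. factoring through $H^1$ on a slightly shorter time interval and using Rellich). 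Finally, to get the statement as an equality of the infima $\Topt{M} = \Topt{0}$ — and not merely ``exact controllability for $M=0$ in time $T$ implies the same for general $M$'' — one applies the perturbation lemma in both directions (with $+M$ and $-M$) and uses that $\Topt{}$ is characterized by the threshold property stated right after \eqref{def Topt}, which immediately forces the two thresholds to coincide.
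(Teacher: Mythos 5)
Your overall plan (compactness–uniqueness on the difference of input maps, both directions by symmetry) is the same as the paper's, but the key unique‑continuation step as you propose it is the wrong route and would in fact fail. You want to show $N_T(A_M)=\{0\}$ for $T>T_c$, i.e. that if $z$ solves the adjoint system with $M$ present and $z_-(\cdot,1)=0$ on $(0,T)$ then $z\equiv 0$; this is precisely the approximate controllability of $(A_M,B)$ in time $T$, and the paper explicitly warns (Remark~\ref{rem indep M}, and the discussion of Russell's approach after Lemma~\ref{lem small times}) that this property is genuinely hard for general $M$ — it can even fail at the threshold. Your claim that ``the presence of $M$ only couples the equations through a bounded zeroth-order term, which does not change the domains of dependence, so the geometric argument of Section~\ref{sect suff cond} carries over verbatim'' is not correct: once $M\neq 0$, each transport equation has a source term involving the other components, so the cascading ``$z$ vanishes along characteristics from $x=1$'' reasoning in \eqref{BC zuno} and \eqref{obs ineq zm} breaks down, and there is no free propagation of zeros. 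This is exactly why the paper instead only verifies the \emph{Fattorini–Hautus test} $\ker(\lambda-A_M^*)\cap\ker B^*=\{0\}$ (a statement about eigenfunctions, proved by $x$‑ODE uniqueness in two lines) and uses the version of the compactness–uniqueness theorem from \cite{DO18} (Theorem~\ref{cor DO18}/\ref{thm DO18} here) that trades the endpoint time $T$ for $T+\epsilon$ — precisely so as not to need approximate controllability.

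Two further points. First, the compactness of $\Phi_M(T)^*-\Phi_0(T)^*$ is not a soft ``smoothing along characteristics / Rellich'' fact: in the paper it requires first replacing $M$ by an $\widetilde{M}$ that vanishes on all off‑diagonal entries where two speeds coincide (Lemma~\ref{prop hyp Rus78}), and then an explicit expansion of the Volterra term into kernel operators of the form of Lemma~\ref{lem cpct op}, whose compactness hinges on $\partials{\alpha}\neq 0$, i.e. on the speeds being \emph{distinct}. Without the pre-processing by Lemma~\ref{prop hyp Rus78}, those kernels become singular where $\lambda_i\equiv\lambda_j$, and the difference of input maps is not compact — this is the true role of assumption~\eqref{hyp Rus78 eg}. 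Your interpretation of \eqref{hyp Rus78 eg} as ensuring ``the characteristic lines do not cross'' is therefore off the mark: coinciding characteristics are harmless for transport, but fatal for the compactness estimate, and \eqref{hyp Rus78 eg} is what licenses the removal of the corresponding coupling terms; the counterexample in Appendix~\ref{app counterexample} shows this is not a technicality. Second, be careful with ``same time $T$'' phrasing: the theorem equates infima $\Topt{M}=\Topt{0}$, not sets of controllable times — which is exactly what the Fattorini–Hautus (rather than unique-continuation) route delivers.
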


Note that this will achieve the proof of our main result Theorem \ref{main thm}, when combined with Theorem \ref{thm unp syst} of the previous section.

\begin{remark}\label{rem NC pas stab}
Let us mention again that such a perturbation result is in general not true for the null controllability property (if $\rank Q<p$).
This is easily seen using the simple $2 \times 2$ system (3.40) in \cite[pp. 657-658]{Rus78}, namely:
\begin{equation}\label{syst CE NC}
\left\{\begin{array}{l}
\ds \pt{y_1}(t,x)=-\px{y_1}(t,x)-\epsilon y_2(t,x), \\
\ds \pt{y_2}(t,x)=\px{y_2}(t,x), \\
y_1(t,0)=0, \quad y_2(t,1)=u(t),  \\
y_1(0,x)=y^0_1(x), \quad y_2(0,x)=y^0_2(x),
\end{array}\right.
\quad t \in (0,+\infty), x \in (0,1),
\end{equation}
with $\epsilon \in \R$.
Note that $Q=0$ in this example.
By explicit computations it can be checked that:
\begin{itemize}
\item
If $\epsilon=0$, then \eqref{syst CE NC} is null controllable in time $T$ if, and only if, $T \geq 1$.
\item
If $\epsilon \neq 0$, then \eqref{syst CE NC} is null controllable in time $T$ if, and only if, $T \geq 2$.
\end{itemize}
\end{remark}

\subsection{Idea of the proof and preliminary results}

The key point in the proof of Theorem \ref{thm M to zero} is to show that the difference between the input maps of two systems (not exactly $(A_0,B)$ and $(A_M,B)$, but some perturbations of them) is a compact operator.
Indeed, the conclusion will then follow from the following general abstract result:

\begin{theorem}\label{cor DO18}
Let $H$ and $U$ be two complex Hilbert spaces.
Let $A_1:\dom{A_1} \subset H \longrightarrow H$ be the generator of a $C_0$-semigroup on $H$ and let $B \in \lin{U,\dom{A_1^*}'}$ be admissible for $A_1$.
Let $P \in \lin{H}$ be a bounded operator and let us form the unbounded operator $A_2=A_1+P$ with $\dom{A_2}=\dom{A_1}$.
For $i=1,2$, let $\Phi_i(T) \in \lin{L^2(0,+\infty;U),H}$ be the input map of $(A_i,B)$ at time $T \geq 0$, and let
$$\Toptlib{A_i,B}=\inf\ens{T>0, \quad (A_i,B) \mbox{ is exactly controllable in time } T} \in [0,+\infty].$$
We assume that:
\begin{enumerate}[(i)]
\item\label{FHtest}
For $i=1,2$, $(A_i,B)$ satisfies the Fattorini-Hautus test, i.e.
\begin{equation}\label{FH test}
\ker(\lambda-A_i^*) \cap \ker B^*=\ens{0}, \quad \forall \lambda \in \C.
\end{equation}

\item\label{impmap cpct}
$\Phi_1(T)^*-\Phi_2(T)^*$ is compact for every $T>0$.
\end{enumerate}
Then, we have $\Toptlib{A_2,B}=\Toptlib{A_1,B}$.
\end{theorem}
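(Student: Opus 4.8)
The strategy is the classical compactness–uniqueness scheme applied to the observability inequalities on the adjoint side. By duality, exact controllability of $(A_i,B)$ in time $T$ is equivalent to the observability inequality $\norm{z}_H^2 \le C_i \int_0^T \norm{\Phi_i(T)^*z}_U^2\,dt$ for all $z\in H$. Since exact controllability in some time is monotone in $T$, it suffices to prove that for every $T>0$, $(A_1,B)$ is exactly controllable in time $T$ if and only if $(A_2,B)$ is; the equality of infima then follows immediately. By symmetry of the hypotheses (note that $A_1 = A_2 + (-P)$ with $-P\in\lin H$, and both directions of the hypotheses are symmetric in the indices $1,2$), it is enough to prove one implication, say that exact controllability of $(A_1,B)$ in time $T$ implies that of $(A_2,B)$ in the same time $T$.

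First I would record the Duhamel-type identity relating the two semigroups. Since $A_2 = A_1 + P$ with $P$ bounded, the variation-of-constants formula gives $S_{A_2}(t)^* = S_{A_1}(t)^* + \int_0^t S_{A_1}(t-s)^* P^* S_{A_2}(s)^*\,ds$, and a parallel identity holds for the input maps; hypothesis \ref{impmap cpct} tells us precisely that the resulting operator $K_T := \Phi_1(T)^* - \Phi_2(T)^* \in \lin{H, L^2(0,T;U)}$ is compact. Now assume $(A_1,B)$ is exactly controllable in time $T$, so the observability inequality holds for $\Phi_1(T)^*$ with some constant $C_1$. Writing $\Phi_1(T)^* = \Phi_2(T)^* + K_T$ and using $\norm{a}^2 \le 2\norm{a-b}^2 + 2\norm{b}^2$, we obtain
\begin{equation*}
\norm{z}_H^2 \le 2C_1\left(\int_0^T\norm{\Phi_2(T)^*z}_U^2\,dt + \norm{K_T z}_{L^2(0,T;U)}^2\right), \quad \forall z\in H.
\end{equation*}
This is a ``weak'' observability inequality: the desired inequality for $\Phi_2(T)^*$ up to the compact perturbation $K_T$.

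The core of the argument is then the standard compactness–uniqueness step to absorb $\norm{K_T z}^2$. Suppose, for contradiction, that the genuine observability inequality for $\Phi_2(T)^*$ fails; then there is a sequence $z_k\in H$ with $\norm{z_k}_H = 1$ and $\int_0^T\norm{\Phi_2(T)^*z_k}_U^2\,dt \to 0$. Passing to a subsequence, $z_k \rightharpoonup z$ weakly in $H$, and by compactness of $K_T$, $K_T z_k \to K_T z$ strongly in $L^2(0,T;U)$. Feeding this into the weak observability inequality shows that $(z_k)$ is a Cauchy sequence in $H$, hence $z_k \to z$ strongly with $\norm{z}_H = 1$; in particular $z\neq 0$. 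Passing to the limit also gives $\Phi_2(T)^*z = 0$ in $L^2(0,T;U)$, i.e. $B^* S_{A_2}(T-t)^* z = 0$ for a.e. $t\in(0,T)$. The final step is a unique-continuation argument: one must deduce from $B^* S_{A_2}(\cdot)^* z = 0$ on a time interval that $z = 0$, contradicting $\norm z_H = 1$. This is where hypothesis \ref{FHtest} — the Fattorini–Hautus test for $(A_2,B)$ — enters: the set of $z$ annihilated by $t\mapsto B^*S_{A_2}(t)^*z$ on $(0,T)$ is an $A_2^*$-invariant closed subspace contained in $\ker B^*$, and the Fattorini–Hautus condition forces such a subspace to be trivial (one extracts eigenvectors of $A_2^*$ lying in $\ker B^*$ if the subspace were nonzero — or, if $A_2^*$ has empty point spectrum on that invariant subspace, one argues directly that the subspace must be $\{0\}$; this is the content of the equivalence between the Fattorini–Hautus test and approximate controllability, which one should invoke or reprove here). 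I expect this unique-continuation/spectral step to be the main obstacle, since it requires knowing that on the finite-time interval the observation subspace is $A_2^*$-invariant (which uses that $A_2$ generates a $C_0$-semigroup and the admissibility of $B$, guaranteeing $\Phi_2(T)^*$ is well-defined on all of $H$) and then translating the Fattorini–Hautus hypothesis into triviality of that subspace; all remaining steps are routine functional analysis. Reversing the roles of $A_1$ and $A_2$ gives the converse implication and hence $\Toptlib{A_2,B} = \Toptlib{A_1,B}$.
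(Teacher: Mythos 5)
Your plan has the right shape (split $\Phi_1(T)^*$ into $\Phi_2(T)^* + K_T$ with $K_T$ compact, weak observability, compactness–uniqueness, Fattorini–Hautus for uniqueness), and the Cauchy-sequence step is correct, but the overall strategy aims at a statement that is in fact false. You propose to show that exact controllability of $(A_1,B)$ in time $T$ implies exact controllability of $(A_2,B)$ in the \emph{same} time $T$. This cannot hold in the present generality: as the paper itself observes (Remark~\ref{rem indep M}), there are bounded perturbations $P$ for which the system $(A_2,B)=(A_1+P,B)$ loses exact controllability at the critical time while $(A_1,B)$ is exactly controllable there, even though both have the same infimal time. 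So the implication you try to prove has counterexamples within the paper's own setting.

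The concrete gap is the asserted $A_2^*$-invariance of $N_T=\{z : B^*S_{A_2}(\cdot)^*z=0 \text{ a.e. on }(0,T)\}$. The semigroup maps $N_T$ into the \emph{larger} space $N_{T-\tau}$, not into $N_T$: if $z\in N_T$ and $\tau\in(0,T)$, then $B^*S_{A_2}(t)^*S_{A_2}(\tau)^*z=B^*S_{A_2}(t+\tau)^*z$ vanishes only for a.e.\ $t\in(0,T-\tau)$. Hence $N_T$ is not invariant, no eigenvector of $A_2^*$ can be extracted from it for fixed $T$, and the Fattorini–Hautus test does not yield $N_T=\{0\}$ at time $T$. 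The correct argument keeps your compactness step to get $\dim N_T<\infty$, then uses the fact that $T\mapsto N_T$ is a decreasing family of finite-dimensional spaces, so it stabilizes at some $T_1\geq T$; the \emph{stabilized} space $N_{T_1}$ is genuinely semigroup-invariant, and Fattorini–Hautus kills it, giving observability for every $T'>T$ (strict inequality). This is precisely the content of the auxiliary result the paper invokes (Theorem~\ref{thm DO18}, from \cite{DO18}): a weak observability at time $T_0$ plus Fattorini–Hautus implies exact controllability for all $T>T_0$. From that, the inequality $\Toptlib{A_2,B}\leq T_1+\epsilon$ for all $\epsilon>0$ and hence $\Toptlib{A_2,B}\leq\Toptlib{A_1,B}$ follows, and symmetry finishes. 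Reformulating your goal as ``controllability at every $T>T_0$'' rather than ``at $T_0$ itself'' closes the gap.
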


This general result was already noticed in \cite[Remarks 2.4 and 1.5]{DO18} and similar ideas have also been used earlier in \cite[p. 657, p. 659]{Rus78} (with a stronger assumption than \ref{FHtest} though, see below).
The proof of Theorem \ref{cor DO18} is a simple application of the compactness-uniqueness result \cite[Theorem 4.1]{DO18}, it is detailed at the beginning of Appendix \ref{app impmap cpct} for the sake of completeness.

Let us now point out that concerning our system \eqref{syst} it is actually claimed (without proof) in \cite[p. 657]{Rus78} that ``A somewhat involved, but not conceptually difficult, argument allows one to see that the operator differences $S^*-S_d^*$, $C^*-C_d^*$ are both compact.'' (see also \cite[p. 659]{Rus78}), where $C^*-C_d^*$ corresponds to $\Phi_{M}(T)^*-\Phi_{M_d}(T)^*$ in our notation, where $M_d$ denotes the diagonal part of $M$ (strictly speaking it is only almost true, since we recall that a different boundary condition at $x=1$ is considered in \cite{Rus78}).
However, it appears to us that the proof of this claim is not straightforward at all, in particular because the solution to the adjoint system of $(A_M,B)$ is not explicit if $M$ has no particular structure.
We also think that it deserves more than these three lines since it is in fact the key point to transfer the controllability properties of one system onto another, thanks to Theorem \ref{cor DO18}.
The main goal of Section \ref{sect cpct uniq} is thus to provide a complete proof of this fact.
As already mentioned, once this is done, Theorem \ref{thm M to zero} will be an immediate consequence of Theorem \ref{cor DO18}, because the assumption \ref{FHtest} will be easily checked in our case.

We would also like to emphasize that, even though the fact that the difference between the input maps is compact have been suggested in \cite{Rus78}, Theorem \ref{thm M to zero} could not have been obtained with the techniques in \cite{Rus78}.
The reason is that the author, interested in keeping the exact same time of control for the perturbed system, used a different (in some sense, weaker) version of the compactness-uniqueness result Theorem \ref{cor DO18}.
Namely, the author used the equivalence between exact and approximate controllability for such systems.
The conclusion is slightly stronger than in Theorem \ref{cor DO18} since one obtains the exact controllability in the same time for the perturbed system but the assumption is also harder to check since proving the approximate controllability of the system \eqref{syst} with a general $M$ does not seem a much easier task.

Now, in order to check that the difference between the input maps of two systems is compact, we developed the following practical sufficient condition involving only the unperturbed system:

\begin{lemma}\label{lem small times}
Under the framework of Theorem \ref{cor DO18} (we do not assume \ref{FHtest} and \ref{impmap cpct} here though), we assume that:
\begin{enumerate}[label={(\roman*)$'$}]
\setcounter{enumi}{1}
\item\label{hyp ii prime}
There exist $\epsilon>0$, a Hilbert space $\widehat{H}$, a function $G \in L^2(0,\epsilon;\lin{H,\widehat{H}})$ with $G(t)$ compact for a.e. $t \in (0,\epsilon)$ and $C>0$ such that, for a.e. $t \in (0,\epsilon)$,
$$\norm{B^*V\tilde{z}(t)}_U+\norm{V\tilde{z}(t)}_H \leq C \norm{G(t)z^0}_{\widehat{H}}, \quad \forall z^0 \in \dom{A_1^*},$$
where $V\tilde{z}(t)=\int_0^t K(t,s)\tilde{z}(s) \, ds$ is the Volterra operator with kernel $K(t,s)=S_{A_1}(t-s)^*P^*$ and $\tilde{z}(t)=S_{A_1}(t)^*z^0$.
\end{enumerate}
Then, the assumption \ref{impmap cpct} of Theorem \ref{cor DO18} holds.
\end{lemma}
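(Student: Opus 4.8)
The plan is to reduce the compactness of $\Phi_1(T)^*-\Phi_2(T)^*$ to the compactness of the "one-step" Volterra operator $V$ on a small time interval, and then to propagate it to arbitrary $T$ by a semigroup/iteration argument. First I would make explicit the relation between $\Phi_1(T)^*$ and $\Phi_2(T)^*$. Since $A_2=A_1+P$ with $P$ bounded, the variation-of-constants formula for the perturbed semigroup gives $S_{A_2}(t)^*=S_{A_1}(t)^*+\int_0^t S_{A_1}(t-s)^*P^*S_{A_2}(s)^*\,ds$. Writing $z_i(t):=S_{A_i}(T-t)^*z^1$ and using $\Phi_i(T)^*z^1(t)=B^*S_{A_i}(T-t)^*z^1$ for $z^1\in\dom{A_i^*}$ (extended by continuity to $H$), one obtains
$$
\bigl(\Phi_2(T)^*-\Phi_1(T)^*\bigr)z^1(t)=B^*\!\int_0^{T-t} S_{A_1}(T-t-s)^*P^*\,z_2(s)\,ds,
$$
i.e. the difference of the adjoint input maps is, up to the bounded (by admissibility) operator $B^*$ composed with a time-shift, exactly the operator $\tilde z\mapsto V\tilde z$ but with $\tilde z$ replaced by the perturbed trajectory $z_2=S_{A_2}(\cdot)^*z^1$. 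So the key object really is $V\tilde z(t)=\int_0^t S_{A_1}(t-s)^*P^*\tilde z(s)\,ds$.

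The second step is to show that hypothesis~\ref{hyp ii prime} makes the map $z^0\mapsto\bigl(t\mapsto B^*V\tilde z(t)\bigr)$ from $H$ to $L^2(0,\epsilon;U)$ compact, and likewise $z^0\mapsto\bigl(t\mapsto V\tilde z(t)\bigr)$ into $L^2(0,\epsilon;H)$. This is where the assumed pointwise bound $\norm{B^*V\tilde z(t)}_U+\norm{V\tilde z(t)}_H\le C\norm{G(t)z^0}_{\widehat H}$ with $G(t)$ compact a.e. and $G\in L^2(0,\epsilon;\lin{H,\widehat H})$ enters. The natural argument: take a bounded sequence $z^0_n\rightharpoonup 0$ in $H$; for a.e. fixed $t$, $G(t)z^0_n\to 0$ strongly in $\widehat H$ by compactness of $G(t)$, hence $\norm{B^*V\tilde z_n(t)}_U+\norm{V\tilde z_n(t)}_H\to0$ pointwise a.e.; and this is dominated by $C\norm{G(t)}\,\norm{z^0_n}_H\le C'\norm{G(t)}\in L^2(0,\epsilon)$, so by dominated convergence $B^*V\tilde z_n\to0$ in $L^2(0,\epsilon;U)$ and $V\tilde z_n\to 0$ in $L^2(0,\epsilon;H)$. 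Thus both maps are compact. (One should be a little careful about the measurability in $t$ of $t\mapsto V\tilde z(t)$ and $t\mapsto B^*V\tilde z(t)$, which follows from strong continuity of the semigroups and admissibility of $B$; and about extending the bound from $z^0\in\dom{A_1^*}$ to all of $H$ by density, using that all quantities are continuous in $z^0$.)

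The third step is to go from time $\epsilon$ to arbitrary $T$. Here I would split $[0,T]$ into finitely many subintervals of length $\le\epsilon$ and use a cocycle/telescoping identity: the difference $S_{A_2}(t)^*-S_{A_1}(t)^*$ over a long interval can be written as a sum of terms, each of which contains one factor of the form "$V$ over a subinterval of length $\le\epsilon$" sandwiched between bounded operators ($S_{A_1}(\cdot)^*$, $S_{A_2}(\cdot)^*$, which are bounded on bounded time intervals). Concretely, iterating $S_{A_2}(t)^*=S_{A_1}(t)^*+V$-type corrections and regrouping, $\Phi_2(T)^*-\Phi_1(T)^*$ becomes a finite sum of operators each factoring through one of the compact operators produced in Step~2 (composed on the left with the bounded $B^*S_{A_1}(\cdot)^*$ or identity, and on the right with bounded evolution operators); a composition of a bounded operator with a compact one is compact, and a finite sum of compact operators is compact. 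Hence $\Phi_2(T)^*-\Phi_1(T)^*$ is compact for every $T>0$, which is precisely assumption~\ref{impmap cpct} of Theorem~\ref{cor DO18}.

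The main obstacle I anticipate is the bookkeeping in Step~3: making the "long-time" decomposition rigorous while keeping each compact factor of the form $V$ restricted to an interval of length $\le\epsilon$, so that hypothesis~\ref{hyp ii prime} can be applied on each piece. One must be careful that when the perturbed trajectory $z_2$ appears inside $V$ (rather than $\tilde z=S_{A_1}(\cdot)^*z^0$), one first rewrites $z_2$ via the Duhamel identity as $S_{A_1}(\cdot)^*z^1$ plus a correction that is itself bounded in $z^1$, so that the "innermost" factor is always a Volterra operator acting on an unperturbed trajectory over a short interval, to which Step~2 applies. The rest — measurability, density extension, domination — is routine given admissibility of $B$ and boundedness of $P$.
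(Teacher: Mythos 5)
Your Step 2 (the weak-to-strong compactness argument from the pointwise bound $\norm{B^*V\tilde{z}(t)}_U + \norm{V\tilde z(t)}_H \le C\norm{G(t)z^0}_{\widehat H}$ plus dominated convergence) is correct and in the spirit of what the paper does. The genuine gap is in Step~3. Telescoping $S_{A_2}(t)^*-S_{A_1}(t)^*$ over subintervals of length $\delta\le\epsilon$ produces, via Duhamel, factors $S_{A_2}(\delta)^*-S_{A_1}(\delta)^*=\int_0^\delta S_{A_1}(\delta-s)^*P^*S_{A_2}(s)^*\,ds$, which is $V$ acting on the \emph{perturbed} trajectory $z_2(s)=S_{A_2}(s)^*z^1$, not on $\tilde z(s)=S_{A_1}(s)^*z^1$. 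Hypothesis \ref{hyp ii prime} only controls $V\tilde z$, and the fix you propose (``rewrite $z_2$ via Duhamel as $\tilde z$ plus a correction'') reintroduces $z_2$ inside another Volterra integral, so the iteration does not terminate. This is precisely the circularity the paper closes with Lemma~\ref{lem NRL86}: since the Volterra operator $\Id-V$ is boundedly invertible on $L^2(0,T;H)$, the $L^2$ norm of the semigroup difference $S_{A_1}(\cdot)^*z^0-S_{A_2}(\cdot)^*z^0$ is bounded by that of $V\tilde z(\cdot)$ (unperturbed), once and for all. Combined with the admissibility estimate \eqref{key estim}, this gives the estimate \eqref{estim impmaps} of Proposition~\ref{thm DO18++}, which reduces compactness of $\Phi_1(T)^*-\Phi_2(T)^*$ to an $L^2(0,T)$ bound on $V\tilde z$ and $B^*V\tilde z$ by a compact operator (hypothesis~\ref{estim BVolt}).

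With that reduction in hand, the paper's actual proof of Lemma~\ref{lem small times} performs the long-time telescoping you have in mind, but entirely on the unperturbed side: writing $\tilde z(\tau+j\delta)=S_{A_1}(\tau)^*S_{A_1}(j\delta)^*z^0$ and splitting the inner integral at $s=\tau$, the estimate on $(0,T)$ is reduced to a finite sum over $j$ of the short-time estimate applied to the shifted data $S_{A_1}(j\delta)^*z^0$, together with the terms $\norm{V\tilde z(j\delta)}_H$, which are themselves controlled recursively by the pointwise bound at $t=\delta$. All quantities involve only $S_{A_1}$ and $V\tilde z$, so \ref{hyp ii prime} applies directly on each piece and no appeal to the perturbed trajectory is needed. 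In short: your plan is salvageable, but the missing ingredient — both for closing the Duhamel loop and for avoiding $z_2$ inside $V$ — is the invertibility of $\Id-V$ and the resulting reduction to Proposition~\ref{thm DO18++}.
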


The proof of Lemma \ref{lem small times} is postponed to Appendix \ref{app impmap cpct} for the sake of the presentation.
It relies on some ideas of \cite{NRL86} and an estimate that can be found in \cite{DO18}.

\begin{remark}
It is crucial to observe that the assumption \ref{hyp ii prime} in Lemma \ref{lem small times} only concerns the semigroup of the unperturbed system $(A_1,B)$.
This is what makes this result usable in practice.
Note as well that this assumption has to be checked only for small times, which makes the computation easier in our case.
Finally, let us also mention that another more general condition than \ref{hyp ii prime} can be found in Proposition \ref{thm DO18++} below.
\end{remark}

Roughly speaking, the proof of Theorem \ref{thm M to zero} will then be reduced to check the assumption \ref{hyp ii prime} of Lemma \ref{lem small times}.
We will see in the next section that the computation of $V\tilde{z}(t)$ will reveal some integral operators of a particular form, for which we will need the following technical result to conclude (see also \cite[Lemma 4]{NRL86}):

\begin{lemma}\label{lem cpct op}
Let $\Omega \subset \R^2$ be the bounded open subset defined by
$$\Omega=\ens{(s,x) \in \R^2, \quad x \in (0,1), \quad a(x)<b(x), \quad s \in (a(x),b(x))},$$
for some functions $a,b \in C^{0,1}([0,1])$.
We assume that $\Omega \neq \emptyset$.
Let $\beta \in C^1(\clos{\Omega})$ with $\beta(\Omega)\subset (0,1)$ and
$$
\partials{\beta}(s,x) \neq 0, \quad \forall (s,x) \in \clos{\Omega}.
$$
Denoting the inverse of the map $s \mapsto \beta(s,x)$ by $\beta^{-1}(\cdot,x)$, we also assume that $x \mapsto \pxi{\beta^{-1}}(\xi,x)$ does not depend on $x$.
For every $x \in [0,1]$, let $J(x)\subset \R$ be the bounded open subset defined by
$$J(x)=
\left\{\begin{array}{cl}
\ens{s \in (a(x),b(x)), \quad f_1\left(\beta(s,x)\right)<f_2(x)} & \mbox{ if } a(x)<b(x), \\
\emptyset & \mbox{ otherwise,}
\end{array}\right.
$$
for some $f_1,f_2 \in C^{1,1}([0,1])$ with $\pxi{f_1}>0$ in $[0,1]$ or $\pxi{f_1}<0$ in $[0,1]$.
Let then $\Omega' \subset \Omega$ be the bounded open subset defined by
$$\Omega'=\ens{(s,x) \in \R^2, \quad x \in (0,1), \quad s \in J(x)}.$$
Let $\alpha \in C^1(\clos{\Omega'})$ with $\alpha(\Omega') \subset (0,1)$ and
\begin{equation}\label{ab nonzero}
\partials{\alpha}(s,x) \neq 0, \quad \forall (s,x) \in \clos{\Omega'}.
\end{equation}
Finally, let $k \in L^{\infty}(0,1)$.

Then, for every $f \in L^2(0,1)$ and $x \in [0,1]$, the function $s \mapsto k(\beta(s,x))f(\alpha(s,x))$ belongs to $L^1(J(x))$ with the estimate
\begin{equation}\label{estim cpct op}
\abs{\int_{J(x)} k(\beta(s,x))f(\alpha(s,x)) \, ds} \leq \frac{\norm{k}_{L^{\infty}(0,1)}}{\inf_{s \in J(x)} \abs{\partials{\alpha}(s,x)}}\norm{f}_{L^2(0,1)}.
\end{equation}
Moreover, the linear operator defined for every $f \in L^2(0,1)$ and $x \in [0,1]$ by
\begin{equation}\label{int op}
Kf(x)=\int_{J(x)} k(\beta(s,x))f(\alpha(s,x)) \, ds,
\end{equation}
has the following properties:
\begin{enumerate}[(i)]
\item\label{conclu1}
$K\left(L^2(0,1)\right) \subset L^2(0,1)$ and the operator $f \in L^2(0,1) \mapsto Kf \in L^2(0,1)$ is compact.

\item\label{conclu2}
$K\left(H^1(0,1)\right) \subset H^1(0,1)$ and, for every $f \in H^1(0,1)$ and $x \in [0,1]$, the trace of $Kf$ at $x$ is equal to $Kf(x)$.

\item\label{conclu3}
For every $x \in [0,1]$, the operator $f \in L^2(0,1) \mapsto Kf(x) \in \R$ is compact.
\end{enumerate}

\end{lemma}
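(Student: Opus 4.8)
The plan is to establish the estimate \eqref{estim cpct op} first, to read off \ref{conclu1} and \ref{conclu3} from it, and then to devote the real work to \ref{conclu2}, following the strategy of \cite[Lemma 4]{NRL86}.

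Fix $x$ with $J(x)\neq\emptyset$. On $[a(x),b(x)]$ the maps $\alpha(\cdot,x)$ and $\beta(\cdot,x)$ are $C^1$ with non-vanishing $s$-derivative, hence strictly monotone $C^1$-diffeomorphisms; since $\pxi{f_1}$ has a fixed sign, $s\mapsto f_1(\beta(s,x))$ is strictly monotone, so $J(x)$ is a subinterval of $(a(x),b(x))$, one endpoint being $a(x)$ or $b(x)$ and the other being either one of those or the root $\sigma(x)$ of $f_1(\beta(\sigma(x),x))=f_2(x)$ (which is $C^{1,1}$ in $x$, as $f_1,f_2\in C^{1,1}$ and $\pxi{f_1}\neq0$). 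Applying $\alpha(\cdot,x)$ shows that the endpoints $\underline{d}(x)\leq\overline{d}(x)$ of $\alpha(J(x))\subset(0,1)$ are Lipschitz in $x$, and $c_0:=\inf_{\clos{\Omega'}}\abs{\partials{\alpha}}>0$ by compactness. The substitution $\eta=\alpha(s,x)$ turns $\int_{J(x)}\abs{k(\beta(s,x))f(\alpha(s,x))}\,ds$ into $\int_{\alpha(J(x))}\abs{k(\beta(\alpha^{-1}(\eta,x),x))}\,\abs{\partials{\alpha}(\alpha^{-1}(\eta,x),x)}^{-1}\abs{f(\eta)}\,d\eta\leq c_0^{-1}\norm{k}_{L^\infty(0,1)}\int_{\alpha(J(x))}\abs{f}$; since $\abs{\alpha(J(x))}\leq1$, Cauchy--Schwarz gives \eqref{estim cpct op} and the $L^1$-integrability, uniformly in $x$: $\abs{Kf(x)}\leq c_0^{-1}\norm{k}_{L^\infty(0,1)}\norm{f}_{L^2(0,1)}$. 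This uniform bound is exactly \ref{conclu3} (a bounded linear map $L^2(0,1)\to\R$ has rank $\leq 1$, hence is compact), and it also gives \ref{conclu1}: the same substitution yields $Kf(x)=\int_0^1\kappa(x,\eta)f(\eta)\,d\eta$ with $\kappa(x,\eta)=\indic_{\alpha(J(x))}(\eta)\,k(\beta(\alpha^{-1}(\eta,x),x))\,\abs{\partials{\alpha}(\alpha^{-1}(\eta,x),x)}^{-1}$; since $(s,x)\mapsto(\alpha(s,x),x)$ is a $C^1$-diffeomorphism of $\Omega'$ onto an open subset of $(0,1)^2$, $\kappa$ is measurable, bounded by $c_0^{-1}\norm{k}_{L^\infty(0,1)}$, and supported in $(0,1)^2$, so $\kappa\in L^2((0,1)^2)$ and $K$ is Hilbert--Schmidt, in particular compact.

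The substance is \ref{conclu2}. The hypothesis that $\pxi{\beta^{-1}}$ is independent of $x$ forces $\beta^{-1}(\xi,x)=h(x)+H(\xi)$, i.e. $\beta(s,x)=H^{-1}(s-h(x))$, with $h\in C^1$ and $H$ a $C^1$-diffeomorphism with derivative bounded and bounded away from $0$. Substituting $\tau=s-h(x)$,
\[ Kf(x)=\int_{p(x)}^{q(x)}\tilde{k}(\tau)\,f\bigl(\Theta(\tau,x)\bigr)\,d\tau,\qquad \tilde{k}:=k\circ H^{-1}\in L^\infty,\quad \Theta(\tau,x):=\alpha\bigl(\tau+h(x),x\bigr), \]
where $p(x)\leq q(x)$ are the Lipschitz endpoints of $J(x)-h(x)$ and $\Theta\in C^1$ with $\partial_\tau\Theta=\partials{\alpha}(\tau+h(x),x)\neq0$. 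Let $\widetilde{\mathcal K}$ be a Lipschitz primitive of $\tilde{k}$. Substituting $\eta=\Theta(\tau,x)$ (so $f$ is no longer composed and the new limits are $\underline{d}(x),\overline{d}(x)$), the key identity is $\tilde{k}(\Theta^{-1}(\eta,x))\,\partial_\tau\Theta(\Theta^{-1}(\eta,x),x)^{-1}=\pm\,\partial_\eta\bigl[\widetilde{\mathcal K}\bigl(\alpha^{-1}(\eta,x)-h(x)\bigr)\bigr]$, the sign being that of $\partials{\alpha}$ (constant on each connected component of $\Omega'$); an integration by parts in $\eta$, legitimate since $H^1(0,1)\hookrightarrow W^{1,1}(0,1)\cap C^0([0,1])$, gives, for every $f\in H^1(0,1)$,
\[ Kf(x)=\pm\Bigl(\bigl[\widetilde{\mathcal K}\bigl(\alpha^{-1}(\eta,x)-h(x)\bigr)f(\eta)\bigr]_{\eta=\underline{d}(x)}^{\eta=\overline{d}(x)}-\int_{\underline{d}(x)}^{\overline{d}(x)}\widetilde{\mathcal K}\bigl(\alpha^{-1}(\eta,x)-h(x)\bigr)\,f'(\eta)\,d\eta\Bigr), \]
where $\alpha^{-1}(\underline{d}(x),x)-h(x)$ and $\alpha^{-1}(\overline{d}(x),x)-h(x)$ are the endpoints $p(x),q(x)$.

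It then remains to show that the two terms above lie in $H^1(0,1)$; once this is done, since $x\mapsto Kf(x)$ is continuous (dominated convergence in the first display, using $f\in C^0$ and continuity of $p,q,\Theta$) and an $H^1(0,1)$ function has a unique continuous representative, the trace of $Kf$ at every $x$ equals $Kf(x)$, finishing the proof. For the bulk term $\int_{\underline{d}(x)}^{\overline{d}(x)}P(\eta,x)f'(\eta)\,d\eta$ with $P(\eta,x):=\widetilde{\mathcal K}(\alpha^{-1}(\eta,x)-h(x))$ — bounded and Lipschitz in $x$ uniformly in $\eta$ — the $L^2$-norm of the difference quotient $x\mapsto x+h$ splits into the contribution of the variation of $P$, which is $O(\abs{h})$, and that of the variation of the limits, of the form $\int_{I(x)\triangle I(x+h)}Pf'$ with $\abs{I(x)\triangle I(x+h)}\leq C\abs{h}$; estimating the latter's $L^2_x$-norm by Cauchy--Schwarz and Fubini bounds it by $C\abs{h}\int_0^1\abs{f'(\eta)}^2\abs{S_\eta\triangle(S_\eta-h)}\,d\eta$, where $S_\eta=\ens{x : \underline{d}(x)<\eta<\overline{d}(x)}$, and here one uses that $\underline{d},\overline{d}$ are of bounded variation with uniformly boundedly many monotonicity intervals — automatic in the application to \eqref{syst}, where every map in play is a characteristic of a transport equation with $x$-independent speed — so that $S_\eta$ is a uniformly bounded union of intervals and $\abs{S_\eta\triangle(S_\eta-h)}\leq C\abs{h}$. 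The boundary term is $\widetilde{\mathcal K}$ of a Lipschitz function times $f$ composed with an endpoint function, and the same tameness of $\underline{d},\overline{d}$ renders these compositions $H^1$. I expect this final verification to be the main obstacle: one must carry the low regularity ($k$ only in $L^\infty$, the endpoints only Lipschitz) through the two changes of variables and the integration by parts, the decisive structural inputs being the hypothesis on $\pxi{\beta^{-1}}$ (which linearises the $x$-dependence of $\beta$) and the monotonicity of the characteristic-type maps underlying $\alpha$, $\beta$ and $f_1$.
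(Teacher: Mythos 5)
Your treatment of the estimate and of items \ref{conclu1} and \ref{conclu3} matches the paper's: same change of variable $\eta=\alpha(s,x)$, same $L^\infty$ bound on the resulting kernel giving compactness, same finite-rank observation for \ref{conclu3}. The substance lies in \ref{conclu2}, and there you and the paper diverge. The paper instead performs the single change of variable $\xi=\beta(s,x)$: because $\pxi{\beta^{-1}}$ is $x$-independent, this isolates $k(\xi)\,\pxi{\beta^{-1}}(\xi)$ as a function of $\xi$ alone, puts all $x$-dependence in the (Lipschitz) limits $\beta(a(x),x)$ and $c(x)$ (a three-case $\min$/$\max$ combination of Lipschitz maps) and in the $C^1$ composition $\alpha(\beta^{-1}(\xi,x),x)$ inside $f$, and then concludes directly that $Kf$ is continuous and piecewise $H^1$. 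You exploit the same hypothesis differently: you split $\beta^{-1}(\xi,x)=h(x)+H(\xi)$, shift $\tau=s-h(x)$, change to $\alpha$-coordinates so that $f$ appears unconjugated, and integrate by parts against a Lipschitz primitive of $\tilde{k}=k\circ H^{-1}$. This is a genuinely different execution: the paper keeps $k$ tame and pushes the composition onto $f$; you push $k$ into a composition and pay for it with an integration by parts.

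There is, however, a real gap in your route. You stop short of verifying the $H^1$-membership of the boundary term $\widetilde{\mathcal K}\bigl(\alpha^{-1}(\cdot)-h\bigr)\,f(\underline{d}(\cdot))$ and of the bulk term, and you explicitly say you expect this to be ``the main obstacle.'' Worse, to close it you appeal to properties of $\underline{d}$ and $\overline{d}$ --- bounded variation with a uniformly bounded number of monotonicity intervals --- that are not among the hypotheses of the lemma; you justify them only by saying they are ``automatic in the application to \eqref{syst}.'' As stated, the lemma is a self-contained statement about $a,b,\alpha,\beta,f_1,f_2,k$, and a proof must derive everything from those hypotheses alone (and indeed the paper's argument does, via the observation that the endpoints of $J(x)$ are $\min$/$\max$ combinations of the globally Lipschitz maps $a$, $b$ and the root $\sigma$ of $f_1(\beta(\sigma,x))=f_2(x)$, so that on each regime the inner composition with $f$ is through a $C^1$-diffeomorphism). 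A second, smaller, point: writing $\beta^{-1}(\xi,x)=h(x)+H(\xi)$ requires choosing a base point $\xi_0$ lying in the ranges $\beta((a(x),b(x)),x)$ for all relevant $x$ simultaneously, which is not guaranteed globally; this forces a localization in $x$ that you do not address. Until those points are settled, your argument for \ref{conclu2} is an outline, not a proof.
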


\begin{proof}
~
\begin{enumerate}[1)]
\item\label{proof kL2}
By assumption \eqref{ab nonzero}, the function $s \in J(x) \mapsto \alpha(s,x)$ is a $C^1$-diffeomorphism for every $x \in [0,1]$ such that $a(x)<b(x)$.
Its inverse will be denoted by $\alpha^{-1}(\cdot,x)$.
Using the change of variable $s \mapsto \alpha(s,x)$ we see that the function $s \mapsto k(\beta(s,x))f(\alpha(s,x))$ belongs to $L^1(J(x))$ and
\begin{equation}\label{op K good form}
Kf(x)=\int_0^1 h(\xi,x)f(\xi) \, d\xi,
\qquad
h(\xi,x)=
\left\{\begin{array}{cl}
\ds \frac{k\left(\beta\left(\alpha^{-1}\left(\xi,x\right),x\right)\right)}{\abs{\partials{\alpha}\left(\alpha^{-1}(\xi,x),x\right)}}
\indic_{\alpha\left(J(x),x\right)}(\xi) & \mbox{ if } a(x)<b(x), \\
\ds 0 & \mbox{ otherwise.}
\end{array}\right.
\end{equation}
The Cauchy-Schwarz inequality immediately gives the estimate \eqref{estim cpct op}.
Since the kernel $h \in L^{\infty}((0,1) \times(0,1))$, it is well-known that the operators of the form \eqref{op K good form} are compact, so that \ref{conclu1} holds.

\item
For the proof of item \ref{conclu2} we assume for instance that we are in the case $\partials{\beta}>0$ in $\clos{\Omega}$ and $\pxi{f_1}>0$ in $[0,1]$.
Using then the change of variable $s \mapsto \beta(s,x)$ when $a(x)<b(x)$ shows that
$$
Kf(x)=
\int_{\beta\left(a(x),x\right)}^{c(x)}
k(\xi)
f\left(\alpha\left(\beta^{-1}\left(\xi,x\right),x\right)\right)
\pxi{\beta^{-1}}(\xi,x)
\, d\xi,
$$
where
$$
c(x)=
\left\{\begin{array}{cl}
\beta\left(b(x),x\right) & \mbox{ if } a(x)<b(x) \mbox{ and } f_1(\beta(b(x),x))<f_2(x), \\
\ds f_1^{-1}\left(f_2(x)\right) & \mbox{ if } a(x)<b(x) \mbox{ and } f_1(\beta(a(x),x)) \leq f_2(x) \leq f_1(\beta(b(x),x)), \\
\ds \beta(a(x),x) & \mbox{ otherwise.}
\end{array}\right.
$$

Thanks to our regularity assumptions, we see that, when $f \in H^1(0,1)$, $Kf$ is continuous on $[0,1]$ and piecewise $H^1(0,1)$, which yields $Kf \in H^1(0,1)$ with trace at $x \in [0,1]$ equal to $Kf(x)$.

\item
Finally, the compactness of $f \in L^2(0,1) \mapsto Kf(x)$ is immediate since this operator is bounded by the estimate \eqref{estim cpct op} and its range is a finite-dimensional space.
\end{enumerate}
\end{proof}

We conclude this section with the statement of a last lemma.
We will see during the proof of Theorem \ref{thm M to zero} below that it is crucial to have only integral terms on subsets of the form $J(x)$ satisfying the assumptions of the previous lemma.
Since these subsets do not in general agree with $(0,1)$, we may have other undesirable integral terms.
The goal of the next lemma is to show that we can remove these possible other ``bad'' integral terms if we assume \eqref{hyp Rus78 eg}, which is the main purpose of this assumption.

\begin{lemma}\label{prop hyp Rus78}
For every $i \in \ens{1,\ldots,n}$, let
$$E_i=\ens{j \in \ens{1,\ldots,n} \, \middle| \, \quad \exists x \in [0,1], \quad \lambda_j(x)=\lambda_i(x)}.$$
Assume that \eqref{hyp Rus78 eg} holds, i.e.
$$E_i=\ens{j \in \ens{1,\ldots,n} \, \middle| \, \quad \lambda_j(x)=\lambda_i(x), \quad \forall x \in [0,1]}.$$
Then, for every $M \in L^{\infty}(0,1)^{n \times n}$, there exists $\widetilde{M} \in L^{\infty}(0,1)^{n \times n}$ such that the following two properties hold:
\begin{enumerate}[(i)]
\item\label{equiv mt}
For every $T>0$, $(A_{\widetilde{M}},B)$ is exactly controllable in time $T$ if, and only if, $(A_M,B)$ is exactly controllable in time $T$.

\item\label{mt zero}
For every $i \in \ens{1,\ldots,n}$ and every $j \in E_i$, we have
$$\widetilde{m}_{i,j}(x)=\delta_{i,j}\px{\lambda_i}(x), \quad \mbox{ a.e. } x \in (0,1),$$
where $\delta_{i,j}$ denotes the Kronecker delta, i.e. $\delta_{i,j}=1$ if $i=j$ and $\delta_{i,j}=0$ otherwise.
\end{enumerate}
\end{lemma}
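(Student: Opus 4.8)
The idea is to perform a time-independent change of unknowns of the form $y = \Pi w$, where $\Pi \in C^{0,1}([0,1])^{n\times n}$ is an invertible matrix to be chosen, which preserves the diagonal structure of the principal part, preserves the boundary conditions at $x=0$ and $x=1$ (up to an obvious relabeling of the control), and hence preserves the exact controllability in any given time $T$. Under such a transformation the internal coupling matrix $M$ gets replaced by
$$\widetilde M = \Pi^{-1}M\Pi + \Pi^{-1}\Lambda\,\px{\Pi} - \Pi^{-1}(\px{\Lambda})\Pi + \px{\Lambda},$$
or something of this shape; the exact formula will come out of plugging $y=\Pi w$ into \eqref{syst} and using that $\Lambda$ is diagonal. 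The point is that we have freedom in choosing $\Pi$, and we want to use it to kill the entries $\widetilde m_{i,j}$ for $j\in E_i$, $i\neq j$, and to normalize the diagonal entries $\widetilde m_{i,i}$ to $\px{\lambda_i}$. First I would restrict attention to $\Pi$ that is block-diagonal with respect to the partition of $\{1,\dots,n\}$ into the equivalence classes $\{E_i\}$ (these are genuine equivalence classes thanks to \eqref{hyp Rus78 eg}): within a class all the speeds coincide as functions on $[0,1]$, say equal to a common $\mu(x)$, so the corresponding diagonal block of $\Lambda$ is the scalar matrix $\mu(x)\Id$, which commutes with everything. Outside the classes we simply take the identity, so that entries $\widetilde m_{i,j}$ with $j\notin E_i$ are left untouched.

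The concrete step is then: fix a class $E$ of size $q$, let $\mu$ be the common speed, let $M_E(x)\in\R^{q\times q}$ be the corresponding diagonal block of $M$ (more precisely of $M - \px{\Lambda}$, to match the normalization in \ref{mt zero}), and look for $\Pi_E\in C^{0,1}([0,1])^{q\times q}$ invertible solving the linear ODE $\mu(x)\px{\Pi_E}(x) + M_E^{0}(x)\Pi_E(x) = 0$ on $[0,1]$, where $M_E^0$ denotes $M_E$ minus its (new, desired) diagonal part $\diag(\px{\lambda_i})_{i\in E} - \px{\Lambda}|_E = 0$; i.e. we just want $\mu\px{\Pi_E} + M_E\Pi_E$ to become diagonal with the prescribed diagonal. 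Since $\mu(x)\neq 0$ for all $x\in[0,1]$ (by \eqref{hyp speeds}, the common speed of a class is either strictly positive or strictly negative), this is a regular linear ODE; taking $\Pi_E(0)=\Id$ we get a unique global solution on $[0,1]$, and by Liouville's formula $\det\Pi_E(x)\neq 0$ everywhere, so $\Pi_E\in C^{0,1}([0,1])^{q\times q}$ is invertible with $C^{0,1}$ inverse. Patching these blocks (and the identity elsewhere) gives the global $\Pi$, and one reads off $\widetilde M = \Pi^{-1}M\Pi + \Pi^{-1}\Lambda\px{\Pi}+(\text{lower-order book-keeping terms})$; the block structure guarantees that, within each class, $\widetilde M$ has exactly the prescribed diagonal form, which is \ref{mt zero}. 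One must also check the boundary conditions transform correctly: at $x=1$, $y_-(1)=u$ becomes $w_-(1) = (\Pi^{-1})_{--}(1)u + (\dots)w_+(1)$, but since $\Pi$ is block-diagonal and the positive-speed classes are disjoint from the negative-speed classes this is still of the form $w_-(1)=\tilde u(t)$ after an invertible change of control and a harmless absorption; at $x=0$, $y_+(0)=Qy_-(0)$ becomes $w_+(0) = \widetilde Q w_-(0)$ with $\widetilde Q = \Pi_+(0)^{-1}Q\Pi_-(0)$, which changes $Q$ but since $\Pi(0)=\Id$ we in fact keep $\widetilde Q = Q$ — a clean bonus. Finally, the equivalence of exact controllability \ref{equiv mt} is immediate because $w\mapsto \Pi w$ is a bounded invertible map of $L^2(0,1)^n$ onto itself intertwining the two systems.

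The main obstacle I anticipate is purely book-keeping: getting the transformed coefficient matrix exactly right (the $\px{\Lambda}$ terms in Definition~\ref{def weak sol} and in \eqref{op adj} mean one has to be careful about which ``$M$'' one normalizes, and the problem statement normalizes the diagonal of $\widetilde M$ to $\px{\lambda_i}$, i.e. to the diagonal of $\px{\Lambda}$, which is exactly the value that makes the adjoint semigroup simple as in Proposition~\ref{prop semigp explicit}), and checking that the $x=1$ boundary condition and the control operator survive the transformation in admissible form. There is also a minor subtlety that $\Pi$ is only Lipschitz, not $C^1$, because $M\in L^\infty$; but the ODE $\mu\px{\Pi_E}=-M_E\Pi_E$ with $\mu$ Lipschitz and bounded away from $0$ and $M_E\in L^\infty$ has a solution in $W^{1,\infty}=C^{0,1}$ by the standard integral-equation fixed point, so this is not a real difficulty. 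Once $\Pi$ is in hand the rest is a direct substitution, and I would present it as: (1) reduce to block-diagonal $\Pi$ using \eqref{hyp Rus78 eg}; (2) solve the block ODEs; (3) compute $\widetilde M$ and verify \ref{mt zero}; (4) verify the boundary/control conditions and conclude \ref{equiv mt}.
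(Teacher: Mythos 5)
Your proposal follows essentially the same route as the paper: a block-diagonal change of unknowns (your $\Pi$ is exactly the paper's $\Psi^{-1}$) whose blocks correspond to the equal-speed equivalence classes, with the blocks determined by a regular linear ODE on $[0,1]$ normalized to the identity at $x=0$ so that $Q$ is preserved and only the control is relabeled at $x=1$. The one step you wave past as ``direct substitution'' is the rigorous justification that the change of variables intertwines the two \emph{weak} solution concepts (not just the formal PDEs), which the paper handles with a density argument in the test functions since $\Psi$ is only Lipschitz; this is a technical but not conceptual omission.
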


In fact, we can prescribe any $L^\infty$ function on the diagonal of $\widetilde{M}$, we chose $\px{\lambda_i}$ only for later computational purposes.
The proof of Lemma \ref{prop hyp Rus78} is technical and it is postponed to Appendix \ref{app pert diag} for the sake of clarity (see also \cite[Remark 6]{HDMVK16} for the constant case).
It is essentially an appropriate change of variable.


\begin{remark}
Let us mention that it is assumed in \cite[p. 322]{NRL86} that
\begin{equation}\label{hyp NRL86}
\forall i,j \in \ens{1,\ldots,n}, \quad i \neq j, \quad \left(\exists x \in [0,1], \quad \lambda_i(x)=\lambda_j(x)\right) \Longrightarrow \left(m_{i,j}=0 \mbox{ in } (0,1)\right).
\end{equation}
Therefore, Lemma \ref{prop hyp Rus78} shows that the assumption \eqref{hyp Rus78 eg} of \cite{Rus78} is stronger than the assumption \eqref{hyp NRL86} of \cite{NRL86}.
All the results of the present article remain valid if \eqref{hyp Rus78 eg} is replaced by \eqref{hyp NRL86} (and \eqref{hyp speeds} can also be replaced by \eqref{hyp speeds bis}, as long as we assume \eqref{order times}).
We chose to work under the assumptions of \cite{Rus78} simply because they are more standard.
Finally, we mention that our counterexample in Section \ref{app counterexample} below also shows that the time $T_p(\Lambda)+T_{p+1}(\Lambda)$ may not be improved if \eqref{hyp NRL86} fails.
\end{remark}

\subsection{Proof of Theorem \ref{thm M to zero}}

The main steps of the proof of Theorem \ref{thm M to zero} have been explained in the previous section.
Let us now go into the details.

\begin{enumerate}[1)]
\item
Let $M \in L^{\infty}(0,1)^{n \times n}$ be fixed and let $\widetilde{M} \in L^{\infty}(0,1)^{n \times n}$ be the corresponding matrix provided by Lemma \ref{prop hyp Rus78}.
The idea is to apply Theorem \ref{cor DO18} with
$$A_1=A_{\px{\Lambda}},
\quad
A_2=A_{\widetilde{M}},
\quad
P=\widetilde{M}-\px{\Lambda}.$$
Once the assumptions of this theorem will be checked, we will obtain
$$\Topt{\widetilde{M}}=\Topt{\px{\Lambda}}.$$
The desired identity \eqref{stab Topt} will then follows from item \ref{equiv mt} of Lemma \ref{prop hyp Rus78} and Proposition \ref{prop stab diag mat}.

\item
First of all, we have to check that $(A_{\px{\Lambda}},B)$ and $(A_{\widetilde{M}},B)$ satisfy the Fattorini-Hautus test.
This is an easy step.
In fact, let us show that $(A_M,B)$ satisfies the Fattorini-Hautus test for every $M \in L^{\infty}(0,1)^{n \times n}$.
Let $\lambda \in \C$ and $z \in \dom{A_M^*}$ be such that $A_M^*z=\lambda z$ and $B^*z=0$.
Thus, $z \in H^1(0,1)^n$ solves the system of O.D.E.
$$
\left\{\begin{array}{l}
\ds \px{z}(x)=-\Lambda(x)^{-1}\left(\lambda \Id_{\R^{n \times n}} +\px{\Lambda}(x)-M(x)^*\right)z(x), \quad x \in (0,1), \\
z(1)=0,
\end{array}\right.
$$
so that $z=0$ by uniqueness.

\item
We now turn out to the proof of the second condition \ref{impmap cpct} in Theorem \ref{cor DO18}.
We recall that it is enough to check the assumption \ref{hyp ii prime} of Lemma \ref{lem small times}.
In our case, we will do it for
$$\epsilon=\phi_1(1),$$
so that the expression \eqref{formula semigp 2} of the unperturbed semigroup has only two possibilities when $t \in (0,\epsilon)$, which will make the computations below easier.
In order to check this condition \ref{hyp ii prime}, we will show that $(V\tilde{z}(t))_i(x)$ is in fact a sum of integral terms of the form \eqref{int op}, with the corresponding assumptions of Lemma \ref{lem cpct op} being satisfied.
The conclusion will then follow from this lemma (see below).

First of all, we recall that, for every $t \geq 0$ and $f \in L^2(0,t;L^2(0,1)^n)$, we have the identity
$$\left(\int_0^t f(s) \, ds\right)_i(x)=\int_0^t f_i(s,x) \, ds, \quad \mbox{ a.e. } x \in (0,1), \quad \forall i \in \ens{1,\ldots,n}.$$
This can be seen using for instance the property $L_{i,\varphi}\left(\int_0^t f(s) \, ds\right)=\int_0^t L_{i,\varphi}(f(s)) \, ds$ with the continuous linear forms $L_{i,\varphi} g=\ps{g_i}{\varphi}{L^2(0,1)}$, where $\varphi \in L^2(0,1)$, and Fubini's theorem.
Therefore, for every $i \in \ens{1,\ldots,n}$, we can write
$$\left(V\tilde{z}(t)\right)_i(x)
=\int_0^t \left(S_{A_{\px{\Lambda}}}(t-s)^*P^*\tilde{z}(s)\right)_i(x) \, ds.
$$

\item
We first perform the computations for $i \in \ens{1,\ldots,p}$.
From the expression \eqref{formula semigp 1 gen} of the semigroup, we have
$$
\left(V\tilde{z}(t)\right)_i(x)=
\int_{J_i^-(t,x)} \left(P^*\tilde{z}(s)\right)_i\left(\caractt_i(0;t-s,x)\right) \, ds,
$$
where $J_i^-(t,x)$ is open set defined for every $t \geq 0$ and $x \in [0,1]$ by
$$J_i^-(t,x)=\ens{s \in (0,t), \quad \ssint_i(t-s,x)<0}.$$

On the other hand, denoting the entries of $\widetilde{M}^*-\px{\Lambda}$ by $\left(p^*_{i,j}\right)_{1 \leq i,j \leq n}$, we have,
$$
\left(P^*\tilde{z}(s)\right)_i\left(\caractt_i(0;t-s,x)\right)
=
\sum_{k=1}^n p^*_{i,k} \left(\caractt_i(0;t-s,x)\right)
\tilde{z}_k\left(s,\caractt_i(0;t-s,x)\right).
$$
As a result, combining both expressions yields
$$
\left(V\tilde{z}(t)\right)_i(x)=
 \int_{J_i^-(t,x)} 
 \sum_{k=1}^n p^*_{i,k}\left(\caractt_i(0;t-s,x)\right)
\tilde{z}_k\left(s,\caractt_i(0;t-s,x)\right)
  \, ds.
$$

Let us now recall that $\widetilde{M}$ has been constructed in such a way that (see item \ref{mt zero} of Lemma \ref{prop hyp Rus78})
$$p^*_{i,k}(\xi)=0, \quad \mbox{ a.e. } \xi \in (0,1), \quad \forall k \in E_i,$$
so that
$$
\left(V\tilde{z}(t)\right)_i(x)=
\sum_{\substack{k=1 \\ k \not\in E_i}}^n
\int_{J_i^-(t,x)} 
  p^*_{i,k}\left(\caractt_i(0;t-s,x)\right)
\tilde{z}_k\left(s,\caractt_i(0;t-s,x)\right)
  \, ds.
$$

We split the sum into two sums, according to whether $k \in \ens{1,\ldots,p}$ or $k \in \ens{p+1,\ldots,n}$: $\left(V\tilde{z}(t)\right)_i(x)=(V\tilde{z}(t))_{i,\leq p}(x)+(V\tilde{z}(t))_{i,>p}(x)$ with
$$
(V\tilde{z}(t))_{i,\leq p}(x)=
\sum_{\substack{k=1 \\ k \not\in E_i}}^p 
\int_{J_i^-(t,x)} 
p^*_{i,k}\left(\caractt_i(0;t-s,x)\right)
\tilde{z}_k\left(s,\caractt_i(0;t-s,x)\right) \, ds,
$$
and
$$
(V\tilde{z}(t))_{i,>p}(x)=
\sum_{k=p+1}^n
\int_{J_i^-(t,x)} 
p^*_{i,k}\left(\caractt_i(0;t-s,x)\right)
\tilde{z}_k\left(s,\caractt_i(0;t-s,x)\right) \, ds.
$$

Let us deal with the first sum $(V\tilde{z}(t))_{i,\leq p}(x)$.
Thanks to the semigroup formula \eqref{formula semigp 1 gen}, we have
$$
(V\tilde{z}(t))_{i,\leq p}(x)=
\sum_{\substack{k=1 \\ k \not\in E_i}}^p 
\int_{J^{--}_{i,k}(t,x)} p^*_{i,k}\left(\caractt_i(0;t-s,x)\right)
z^0_k\left(\caractt_k\left(0;s,\caractt_i(0;t-s,x)\right)\right)\, ds,
$$
where $J^{--}_{i,k}(t,x) \subset J_i^-(t,x)$ is open set defined by
$$J^{--}_{i,k}(t,x)=\ens{s \in J_i^-(t,x), \quad \ssint_k\left(s,\caractt_i(0;t-s,x)\right)<0}, \quad \forall k \in \ens{1,\ldots,p}.$$

Let us now deal with the second sum $(V\tilde{z}(t))_{i,>p}(x)$.
Thanks to the semigroup formula \eqref{formula semigp 2 gen} and \eqref{formula semigp 1 gen} (here we use the fact that $t<\phi_1(1)$), we have
\begin{multline*}
(V\tilde{z}(t))_{i,>p}(x)=
\sum_{k=p+1}^n
\int_{J^{--}_{i,k}(t,x)} p^*_{i,k}\left(\caractt_i(0;t-s,x)\right)
z^0_k\left(\caractt_k\left(0;s,\caractt_i(0;t-s,x)\right)\right)\, ds,
\\
+
\sum_{k=p+1}^n
\int_{J^{-+}_{i,k}(t,x)} p^*_{i,k}\left(\caractt_i(0;t-s,x)\right)
\sum_{\ell=1}^p
r_{\ell,k}
z^0_\ell\left(\caractt_\ell\left(0;\ssint_k\left(s,\caractt_i(0;t-s,x)\right),0\right)\right)\, ds,
\end{multline*}
where $J^{--}_{i,k}(t,x),J^{-+}_{i,k}(t,x) \subset J_i^-(t,x)$ are the open sets defined by
$$
\begin{array}{l}
J^{--}_{i,k}(t,x)=\ens{s \in J_i^-(t,x), \quad \ssint_k\left(s,\caractt_i(0;t-s,x)\right)<0}, \quad \forall k \in \ens{p+1,\ldots,n}, \\
J^{-+}_{i,k}(t,x)=\ens{s \in J_i^-(t,x), \quad \ssint_k\left(s,\caractt_i(0;t-s,x)\right)>0}, \quad \forall k \in \ens{p+1,\ldots,n}.
\end{array}
$$

In summary, for every $i \in \ens{1,\ldots,p}$, we have
\begin{multline}\label{calcul vi}
\left(V\tilde{z}(t)\right)_i(x)=(V\tilde{z}(t))_{i,\leq p}(x)+(V\tilde{z}(t))_{i,>p}(x)
\\
=
\sum_{\substack{k=1 \\ k \not\in E_i}}^n 
\int_{J^{--}_{i,k}(t,x)} p^*_{i,k}\left(\caractt_i(0;t-s,x)\right)
z^0_k\left(\caractt_k\left(0;s,\caractt_i(0;t-s,x)\right)\right)\, ds
\\
+
\sum_{k=p+1}^n
\int_{J^{-+}_{i,k}(t,x)} p^*_{i,k}\left(\caractt_i(0;t-s,x)\right)
\sum_{\ell=1}^p
r_{\ell,k}
z^0_\ell\left(\caractt_\ell\left(0;\ssint_k\left(s,\caractt_i(0;t-s,x)\right),0\right)\right)\, ds.
\end{multline}

\item
Similar computations for $j \in \ens{1,\ldots,m}$ show that
\begin{multline}\label{calcul vpj}
\left(V\tilde{z}(t)\right)_{p+j}(x)
=
\sum_{\substack{k=1 \\ k \not\in E_{p+j}}}^n 
\int_{J^{--}_{p+j,k}(t,x)}
p^*_{p+j,k}\left(\caractt_{p+j}(0;t-s,x)\right)
z^0_k\left(\caractt_k\left(0;s,\caractt_{p+j}(0;t-s,x)\right)\right)\, ds
\\
+
\sum_{\substack{k=p+1 \\ k \not\in E_{p+j}}}^n 
\int_{J^{-+}_{p+j,k}(t,x)}
p^*_{p+j,k}\left(\caractt_{p+j}(0;t-s,x)\right)
\sum_{i=1}^p
r_{i,k}
z^0_i\left(\caractt_i\left(0;\ssint_k\left(s,\caractt_{p+j}(0;t-s,x)\right),0\right)\right)\, ds
\\
+
\sum_{i=1}^p r_{i,p+j} \sum_{\substack{k=1 \\ k \not\in E_i}}^n
\int_{J_{p+j,k,i}^{+-}(t,x)}
p^*_{i,k}\left(\caractt_i\left(0;\ssint_{p+j}(t-s,x),0\right)\right)
\\
\times
z^0_k\left(\caractt_k\left(0;s,\caractt_i\left(0;\ssint_{p+j}(t-s,x),0\right)\right)\right) \, ds
\\
+
\sum_{i=1}^p r_{i,p+j} \sum_{k=p+1}^n
\int_{J_{p+j,k,i}^{++}(t,x)}
p^*_{i,k}\left(\caractt_i\left(0;\ssint_{p+j}(t-s,x),0\right)\right)
\\
\times
\sum_{\ell=1}^p r_{\ell,k}
z^0_\ell\left(\caractt_\ell\left(0;\ssint_k\left(s,\caractt_i\left(0;\ssint_{p+j}(t-s,x),0\right)\right),0\right)\right) \, ds,
\end{multline}
where $J^{--}_{p+j,k}(t,x),J^{-+}_{p+j,k}(t,x)$ and $J^{+-}_{p+j,k,i}(t,x),J^{++}_{p+j,k,i}(t,x)$ are the open sets defined for every $t \geq 0$ and $x \in [0,1]$ by
$$
\begin{array}{l}
\ds J^{-\mp}_{p+j,k}(t,x)
=\ens{s \in J_{p+j}^-(t,x), \quad \pm\ssint_k\left(s,\caractt_{p+j}(0;t-s,x)\right)<0},
\\
\ds J^{+\mp}_{p+j,k,i}(t,x)
=\{s \in J_{p+j}^+(t,x), \quad \pm\ssint_k\left(s,\caractt_i\left(0;\ssint_{p+j}(t-s,x),0\right)\right)<0\},
\end{array}
$$
and where $J_{p+j}^-(t,x),J_{p+j}^+(t,x)$ are the open sets defined by
$$J_{p+j}^{\mp}(t,x)=\ens{s \in (0,t), \quad \pm\ssint_{p+j}(t-s,x)<0}.$$

\item
We have just seen that, for every $t \in (0,\epsilon)$, $z ^0 \in L^2(0,1)^n$, $i \in \ens{1,\ldots,n}$ and a.e. $x \in (0,1)$, $\left(V\tilde{z}(t)\right)_i(x)$ is a sum of terms of the form \eqref{int op}.
If we manage to prove that each of these terms satisfies the assumptions of Lemma \ref{lem cpct op}, then this will show that the expressions on the right-hand sides of \eqref{calcul vi} and \eqref{calcul vpj} make sense for every $x \in [0,1]$ (not only a.e.) and belong to $H^1(0,1)$ when $z^0 \in H^1(0,1)^n$, with a trace at $x=1$ equal to the same expression but with $x$ changed into $1$.
A natural candidate for the function $G$ of Lemma \ref{lem small times} will then be the function defined for every $t \in (0,\epsilon)$ and $z^0 \in L^2(0,1)^n$ by
\begin{equation}\label{def G}
G(t)z^0=\left(\left(V\tilde{z}(t)\right)_{p+1}(1),\ldots,\left(V\tilde{z}(t)\right)_n(1),\left(V\tilde{z}(t)\right)_1,\ldots,\left(V\tilde{z}(t)\right)_n\right),
\end{equation}
where $G(t)$ is considered as an operator from the space $H=L^2(0,1)^n$ onto the product space $\widehat{H}=\R^m \times L^2(0,1)^n$ and where, by abuse of notation, $\left(V\tilde{z}(t)\right)_i$ in \eqref{def G} denotes in fact the function defined for every $x \in [0,1]$ by the expression on the right-hand side of \eqref{calcul vi} (if $i \in \ens{1,\ldots,p}$) or \eqref{calcul vpj} (if $i=p+j \in \ens{p+1,\ldots,n}$).
We use a similar abuse of notation for $\left(V\tilde{z}(t)\right)_{p+j}(1)$.

\item
Let us now check that each of the integral terms in \eqref{calcul vi} and \eqref{calcul vpj} satisfies the assumptions of Lemma \ref{lem cpct op}.
We focus on the terms in $\left(V\tilde{z}(t)\right)_{p+j}(x)$ since they are the most important ones (because $\left(V\tilde{z}(t)\right)_{p+j}(1)$ appears in \eqref{def G} and since the terms in \eqref{calcul vi} can be treated similarly to the first two terms in \eqref{calcul vpj}).
Let then $j \in \ens{1,\ldots,m}$ be fixed.
For obvious reasons of presentation we will also only treat one type of integrals in $\left(V\tilde{z}(t)\right)_{p+j}(x)$.
Let us point out that the a priori extra assumptions in Lemma \ref{lem cpct op} are used to treat all the other cases.
We choose to deal with the first type of integrals in \eqref{calcul vpj}, namely,
$$
\int_{J^{--}_{p+j,k}(t,x)}
p^*_{p+j,k}\left(\caractt_{p+j}(0;t-s,x)\right)
z^0_k\left(\caractt_k\left(0;s,\caractt_{p+j}(0;t-s,x)\right)\right)\, ds
=K(t)z^0_k(x).
$$
Let then $k \in \ens{1,\ldots,n}$ with $k \not\in E_{p+j}$ be fixed.
We are in the configuration of Lemma \ref{lem cpct op} with
$$
\begin{array}{c}
J(x)=J_{p+j,k}^{--}(t,x), \quad \beta(s,x)=\caractt_{p+j}(0;t-s,x), \quad \alpha(s,x)=\caractt_k\left(0;s,\caractt_{p+j}(0;t-s,x)\right),
\\
\beta^{-1}(\xi,x)=\phi_{p+j}(\xi)-\phi_{p+j}(x)+t, \\
\Omega=\ens{(s,x) \in \R^2, \quad x \in (0,1), \quad s \in J_{p+j}^-(t,x)}, \quad a(x)=\max\left(0,t-\phi_{p+j}(x)\right), \quad b(x)=t,
\\
f_1(\xi)=
\left\{\begin{array}{ll}
\ds \phi_{p+j}(\xi)+\phi_k(\xi)-\phi_k(1), & \mbox{ if } k \leq p, \\
\ds \phi_{p+j}(\xi)-\phi_k(\xi), & \mbox{ if } k >p, \quad k \not\in E_{p+j},
\end{array}\right.
\quad
f_2(x)=\phi_{p+j}(x)-t.
\end{array}
$$
The regularities of these functions are clear.
Note that, for this case, we have $a(x)<b(x)$ for every $x \in (0,1]$ since $t>0$.
Recalling the definition \eqref{def phi} of the $\phi_k$, and thanks to \eqref{hyp speeds}, we can check that, if $k \leq p$, then $\pxi{f_1}>0$ in $[0,1]$ and, if $k>p$ with $k \not\in E_{p+j}$, then either $\pxi{f_1}>0$ in $[0,1]$ or $\pxi{f_1}<0$ in $[0,1]$.
Let us now compute the derivatives of $\beta$ and $\alpha$.
First of all, it can be checked (using for instance the explicit formula \eqref{express caract}) that, for every $i \in \ens{1,\ldots,n}$, every $t>0$ and $x \in (0,1)$ such that $\ssint_i(t,x)<0$, we have
$$
\left\{\begin{array}{l}
\ds \pt{\caractt_i}(0;t,x)=-\lambda_i\left(\caractt_i(0;t,x)\right), \\
\ds \px{\caractt_i}(0;t,x)=\lambda_i\left(\caractt_i(0;t,x)\right)\frac{1}{\lambda_i(x)}.
\end{array}\right.
$$

It follows that
\begin{equation}\label{ps term 1}
\partials{\beta}(s,x)
=-\pt{\caractt_{p+j}}\left(0;t-s,x\right)=\lambda_{p+j}\left(\caractt_{p+j}(0;t-s,x)\right),
\end{equation}
and
\begin{equation}\label{ps term 2}
\begin{array}{rl}
\ds \partials{\alpha}(s,x)
&\ds =
\pt{\caractt_k}\left(0;s,\caractt_{p+j}(0;t-s,x)\right)
-\px{\caractt_k}\left(0;s,\caractt_{p+j}(0;t-s,x)\right)\pt{\caractt_{p+j}}(0;t-s,x),
\\
&\ds =
-\lambda_k\left(\caractt_k\left(0;s,\caractt_{p+j}(0;t-s,x)\right)\right)
\left(1-\frac{\lambda_{p+j}\left(\caractt_{p+j}\left(0;t-s,x\right)\right)}{\lambda_k\left(\caractt_{p+j}\left(0;t-s,x\right)\right)}\right).
\end{array}
\end{equation}

From these computations, we see that none of these terms are equal to zero.
For the first term \eqref{ps term 1}, this follows from the basic assumption \eqref{hyp speeds}.
For the second term \eqref{ps term 2} this is where we use in a crucial way that $k \not\in E_{p+j}$.
As a result, all the assumptions of Lemma \ref{lem cpct op} are satisfied.

\item
Finally, thanks again to the fact that $k \not\in E_{p+j}$, we have
$$\min_{\xi \in [0,1]} \abs{\lambda_k(\xi)}>0, \qquad \min_{\xi \in [0,1]} \abs{1-\frac{\lambda_{p+j}(\xi)}{\lambda_k(\xi)}}>0.$$
Thus, we see from \eqref{ps term 2} that $\abs{\partials{\alpha}}$ can be estimated from below by a positive constant that does not depend on $t,s$ or $x$.
As a consequence, from the estimate \eqref{estim cpct op} of Lemma \ref{lem cpct op} we obtain that there exists $C>0$ such that
$$\abs{K(t)z^0_k(1)}+\norm{K(t)z^0_k}_{L^2(0,1)} \leq C \norm{z^0_k}_{L^2(0,1)}, \quad \forall t \in (0,\epsilon).$$
Since similar estimates hold for the other integrals and the other components, this shows that for the function $G$ defined by \eqref{def G} we also have $G \in L^{\infty}(0,\epsilon;\lin{H,\widehat{H}}) \subset L^2(0,\epsilon;\lin{H,\widehat{H}})$.
All the assumptions of Lemma \ref{lem small times} are now satisfied.
This completes the proof of Theorem \ref{thm M to zero}.
\qed

\end{enumerate}


\section*{Acknowledgements}
The authors would like to thank Yacine Mokhtari for bringing the article \cite{Wec82} to their attention after they posted a first preprint online.
This project was supported by the Natural Science Foundation of China (No. 11601284), the Young Scholars Program of Shandong University (No. 2016WLJH52) and the National Postdoctoral Program for Innovative Talents (No. BX201600096).

\appendix

\section{Canonical $UL$--decomposition}\label{appendix}

In this appendix we give a proof of Proposition \ref{Gauss elim}, which is a crucial result to define the key elements in our main result Theorem \ref{main thm}.
Let $Q \in \R^{p \times m}$ with $\rank Q=p$ be given.
We recall that want to prove that there exists a unique $Q^0 \in \R^{p \times m}$ such that the following two properties hold:
\begin{enumerate}[(i)]
\item
There exists $L \in \R^{m \times m}$ such that $QL=Q^0$ with $L$ lower triangular ($\ell_{ij}=0$ if $i<j$) and with only ones on its diagonal ($\ell_{ii}=1$ for every $i$).
\item
$Q^0$ is in canonical form (Definition \ref{def canon}).
\end{enumerate}

\begin{proof}[Proof of Proposition \ref{Gauss elim}]
~
\begin{enumerate}[1)]
\item
The existence follows from the Gaussian elimination, as shown for instance in Example \ref{ex Q}.
We briefly recall the general procedure.
Since $\rank Q=p$, the last row of $Q \in \R^{p \times m}$ cannot be zero.
Let then $c_p \in \ens{1,\ldots,m}$ be the column index of the last non-zero entry of the last row of $Q$.
We then remove the entries of $Q$ at the left of $q_{p,c_p}$.
In matricial form this means that we multiply $Q$ to the right by a lower triangular matrix with only ones on its diagonal and zero everywhere else, except for its $c_p$-row whose first $c_p-1$ entries are equal to $\frac{-q_{p,1}}{q_{p,c_p}}, \ldots, \frac{-q_{p,c_p-1}}{q_{p,c_p}}$.
We then obtain an equivalent matrix to $Q$ which has only one non zero entry on its last row.
We then forget about the last row to obtain a $(p-1) \times m$ matrix with full-row rank and we repeat the procedure ($c_{p-1}$ being the last non-zero entry of such a matrix which is not in the $c_p$ column, etc.).
It is not difficult to see that the matrix resulting from these operations is in canonical form.

\item
To show the uniqueness, we assume that there exist two canonical forms $Q^0,\widetilde{Q}^0 \in \R^{p \times m}$ and two lower triangular matrices with only ones on their diagonal $L,\widetilde{L} \in \R^{m \times m}$ such that $QL=Q^0$ and $Q\widetilde{L}=\widetilde{Q}^0$ and we prove that $Q^0=\widetilde{Q}^0$.
Denoting $L'=\widetilde{L}^{-1}L$, we have
$$Q^0=\widetilde{Q}^0L',$$
and $L'$ is a lower triangular matrix with only ones on its diagonal.
Looking at this equality column by column, we have
$$Q^0_j=\widetilde{Q}^0_j+\sum_{i=j+1}^m \ell'_{i,j}\widetilde{Q}^0_i, \quad \forall j \in \ens{1,\ldots,m}.$$
We want to prove that $Q^0_j=\widetilde{Q}^0_j$ for every $j$.
For $j=m$ it is clear.
For $j=m-1$, we have
\begin{equation}\label{uniq 1}
Q^0_{m-1}=\widetilde{Q}^0_{m-1}+\ell'_{m,m-1}\widetilde{Q}^0_m.
\end{equation}
If $\widetilde{Q}^0_m=0$ then we are done.
Assume then that $\widetilde{Q}^0_m \neq 0$.
This necessarily means that $m \in \ens{c_1(\widetilde{Q}^0),\ldots,c_p(\widetilde{Q}^0)}$ by the two last conditions in \eqref{def ci}.
Let us write $m=c_{i_m}(\widetilde{Q}^0)$.
Then, $\tilde{q}^0_{i_m,m-1}=0$ by the last condition in \eqref{def ci}.
On the other hand, since $Q^0_m=\widetilde{Q}^0_m$ by the previous step, the same considerations apply to $Q^0_m$, i.e. $m=c_{k_m}(Q^0)$ for some $k_m$.
Let us show that we necessarily have $k_m=i_m$.
If $k_m>i_m$, then $\tilde{q}^0_{k_m,m}=0$ by \eqref{cond proof direct} in Remark \ref{rem zero dessous}.
Since $q^0_{k_m,m} \neq 0$ by the first condition in \eqref{def ci}, the identity $q^0_{k_m,m}=\tilde{q}^0_{k_m,m}$ would fail.
By the same arguments, $i_m>k_m$ is not possible either.
As a result, $m=c_{i_m}(Q^0)$ and thus $q^0_{i_m,m-1}=0$ as well.
Therefore, looking at the $i_m$-th row of the equality \eqref{uniq 1}, we obtain
$$0=\ell'_{m,m-1}\tilde{q}^0_{i_m,m}.$$
Since $\tilde{q}^0_{i_m,m} \neq 0$ by the first condition in \eqref{def ci}, we obtain that $\ell'_{m,m-1}=0$.
Coming back to \eqref{uniq 1} we have established that $Q^0_{m-1}=\widetilde{Q}^0_{m-1}$.
Reasoning by induction we easily obtain that $Q^0_j=\widetilde{Q}^0_j$ for every $j$.
This completes the proof of the uniqueness part.
\end{enumerate}
\end{proof}

\section{Equality between $\Topt{M}$ and the time of \cite{Wec82}}\label{app time Weck}

In this appendix, we show that the expression of the time $T_c$ given by \eqref{Time Weck} and introduced in \cite{Wec82} for the null controllability of \eqref{syst} with diagonal $M$ coincides with the expression of the minimal time $\Topt{M}$ introduced here in \eqref{thm Topt} for the exact controllability of \eqref{syst} with arbitrary $M$.
More precisely, assuming $\rank Q=p$, we prove the equality
\begin{equation}\label{eq times}
\max_{k \in \ens{1,\ldots,p}} T_{p-k+1}(\Lambda)+T_{p+\ell(k)}(\Lambda)
=\max_{i \in \ens{1,\ldots,p}} T_i(\Lambda)+T_{p+c_i(Q)}(\Lambda).
\end{equation}
We recall that, for every $k \in \ens{1,\ldots,p}$, $\ell(k) \in \ens{1,\ldots,m}$ is the unique index such that
$$\ker C_0E^+_k=\ker E^-_1C_0E^+_k=\ldots=\ker E^-_{\ell(k)-1}C_0E^+_k\subsetneq \ker E^-_{\ell(k)}C_0E^+_k,$$
where $C_0=-\Lambda_-(0)^{-1}Q^*\Lambda_+(0)\Sigma$ and
$$
E^-_\ell=\diag(\underbrace{0,\ldots,0}_{\ell \text{ times }},1\ldots,1),
\qquad E^+_k=\diag(\underbrace{1,\ldots,1}_{k \text{ times }},0\ldots,0),
\qquad \Sigma=
\begin{pmatrix}
(0) &  & 1 \\
 & \reflectbox{$\ddots$} &  \\
1 &  & (0) \\
\end{pmatrix}.
$$

\begin{enumerate}[1)]
\item
The first step is to show that
\begin{equation}\label{what is lk}
\ell(k)=\min(c_p(Q),\ldots,c_{p-k+1}(Q)), \quad \forall k \in \ens{1,\ldots,p}.
\end{equation}
Let $k \in \ens{1,\ldots,p}$ be fixed.
By uniqueness, it is equivalent to prove the following two properties for $\ell(k)$ given by \eqref{what is lk}:
\begin{equation}\label{properties for lk}
\left\{\begin{array}{l}
\ds \ker E^-_\ell C_0E^+_k=\ker C_0E^+_k, \quad \forall \ell \in \ens{1,\ldots,\ell(k)-1}, \\
\ds \ker E^-_{\ell(k)-1} C_0E^+_k \neq \ker E^-_{\ell(k)} C_0E^+_k.
\end{array}\right.
\end{equation}
Using the canonical $UL$--decomposition $Q=Q^0L^{-1}$, a computation shows that, for every $\ell \in \ens{0,\ldots,m-1}$, we have
\begin{equation}\label{caract ker}
\ker E^-_\ell C_0E^+_k=\ens{w_+ \in \R^p, \quad \sum_{j=p-k+1}^p q^0_{j,i}\lambda_j(0)w_{p+1-j}=0, \quad \forall i \in \ens{\ell+1,\ldots,m}}.
\end{equation}
Using \eqref{cond proof direct} and reasoning by induction, we can deduce that, if $\ell \in \ens{0,\ldots,\ell(k)-1}$ (recall that $\ell(k)$ is given by \eqref{what is lk}), then
\begin{equation}\label{caract kern}
\ker E^-_\ell C_0E^+_k=\ens{w_+ \in \R^p, \quad w_i=0, \quad \forall i \in \ens{1,\ldots,k}}.
\end{equation}
This shows in particular that $\ker E^-_\ell C_0E^+_k$ does not depend on $\ell$ if $\ell \in \ens{0,\ldots,\ell(k)-1}$, so that the first property in \eqref{properties for lk} is proved.

To prove the second property in \eqref{properties for lk}, let $i_k \in \ens{p-k+1,\ldots,p}$ be such that $\ell(k)=c_{i_k}(Q)$.
Let us then construct the data $w_+ \in \R^p$ defined by
$$
w_{p+1-r}=
\begin{cases}
0 & \mbox{ if } r \in \ens{1,\ldots,i_k-1}, \\
1 & \mbox{ if } r=i_k, \\
\ds \frac{-1}{q^0_{r, c_r(Q)}\lambda_r(0)}\sum_{j=i_k}^{r-1} q^0_{j, c_r(Q)}\lambda_j(0)w_{p+1-j} & \mbox{ if } r \in \ens{i_k+1,\ldots,p}.
\end{cases}
$$
Firstly, using the characterization \eqref{caract kern}, it is clear that $w_+ \not\in \ker C_0E^+_k$ since $w_{p+1-i_k}=1$ and $p+1-i_k \in \ens{1,\ldots,k}$ by definition of $i_k$.
Let us now show that $w_+ \in \ker E^-_{\ell(k)}C_0E^+_k$.
If $\ell(k)=m$, then $E^-_{\ell(k)}=0$ and this is clear.
We thus assume that $\ell(k) \leq m-1$, and use the characterization \eqref{caract ker} to prove that $w_+ \in \ker E^-_{\ell(k)}C_0E^+_k$.
Let then $i \in \ens{\ell(k)+1,\ldots,m}$ be fixed.
Since $w_{p+1-j}=0$ if $j<i_k$ by construction and $i_k \geq p-k+1$ by definition, we have to show that
\begin{equation}\label{appB proof}
\sum_{j=i_k}^p q^0_{j,i}\lambda_j(0)w_{p+1-j}=0.
\end{equation}
Firstly, observe that this identity is clear if $i \not\in \ens{c_1(Q),\ldots,c_p(Q)}$ since $q^0_{j, i}=0$ in such a case  (see Remark \ref{rem zero dessous}).
Let us then consider $i=c_r(Q)$ for some $r \in \ens{1,\ldots,p}$ and such that $i \in \ens{\ell(k)+1,\ldots,m}$.
In particular, $r \neq i_k$.
If $r<i_k$, then \eqref{appB proof} follows from the fact that $q^0_{j,c_r(Q)}=0$ for every $j>r$ (see \eqref{cond proof direct}).
If $r>i_k$, then we can write
$$
\sum_{j=i_k}^p q^0_{j,c_r(Q)}\lambda_j(0)w_{p+1-j}=
\sum_{j=i_k}^{r} q^0_{j, c_r(Q)}\lambda_j(0)w_{p+1-j}
+\sum_{j=r+1}^p q^0_{j, c_r(Q)}\lambda_j(0)w_{p+1-j}.
$$
On the right hand side, the first sum is equal to zero by construction and the second sum is also equal to zero since $q^0_{j, c_r(Q)}=0$ for $j>r$ (see again \eqref{cond proof direct}).
This establishes \eqref{appB proof}, so that $w_+ \in \ker E^-_{\ell(k)}C_0E^+_k$.
The proof of \eqref{what is lk} is complete.

\item
Let us now see that \eqref{what is lk} implies \eqref{eq times}.
First of all, the inequality ``$\geq$'' is clear thanks to \eqref{order times} since $c_i(Q) \geq \ell(k)$ for $k=p-i+1$.
Let us then show the reversed inequality.
By induction on $k$, we show that each term $T_{p-k+1}(\Lambda)+T_{p+\ell(k)}(\Lambda)$ is less than the right hand side in \eqref{eq times}.
For $k=1$ this is clear since $\ell(1)=c_p(Q)$.
For $k=2$, we have
$$T_{p-1}(\Lambda)+T_{p+\ell(2)}(\Lambda)=T_{p-1}(\Lambda)+T_{p+\min(c_p(Q),c_{p-1}(Q))}(\Lambda).$$
If $\min(c_p(Q),c_{p-1}(Q))=c_{p-1}(Q)$, this is clear since in this case
$$T_{p-1}(\Lambda)+T_{p+\ell(2)}(\Lambda)=T_{p-1}(\Lambda)+T_{p+c_{p-1}(Q)}(\Lambda).$$
On the other hand, if $\min(c_p(Q),c_{p-1}(Q))<c_{p-1}(Q)$, then we have $\ell(2)=\ell(1)$ and, using \eqref{order times}, we obtain
$$T_{p-1}(\Lambda)+T_{p+\ell(2)}(\Lambda)=T_{p-1}(\Lambda)+T_{p+\ell(1)}(\Lambda) \leq T_p(\Lambda)+T_{p+\ell(1)}(\Lambda).$$
Since the right hand side is the term that we have estimated in the previous step $k=1$, the proof is completed for $k=2$.
Reasoning by induction we easily obtain the reversed inequality.
\qed
\end{enumerate}

\section{A counterexample when the assumption \eqref{hyp Rus78 eg} is not satisfied}\label{app counterexample}

In this appendix we construct a counterexample to the conclusion of our main result Theorem \ref{main thm} when the assumption \eqref{hyp Rus78 eg} is not satisfied.
To this end, we consider the following $4\times4$ system:
\begin{equation}\label{4 dimen sys}
 \left\{\begin{array}{l}
\ds \pt{y_1}(t,x)=-\px{y_1}(t,x)+a(x) y_2(t,x), \\
\ds \pt{y_2}(t,x)=\lambda_2(x) \px{y_2}(t,x)-a(x) y_1(t,x), \\
\ds \pt{y_3}(t,x)=\frac{1}{2}\px{y_3}(t,x), \\
\ds \pt{y_4}(t,x)=\px{y_4}(t,x),
\end{array}\right. 
\end{equation}
with boundary conditions
\begin{equation}\label{4 dimen sys BC}
 \left\{\begin{array}{l}
y_1(t,0)=y_3(t,0), \\
y_2(t,0)=y_4(t,0),
\end{array}\right. 
\quad
\left\{\begin{array}{l}
y_3(t,1)=u_1(t), \\
y_4(t,1)=u_2(t),
\end{array}\right. 
\end{equation}
where $\lambda_2 \in C^{0,1}([0,1])$ and $a \in L^{\infty}(0,1)$ are any functions such that (see also Remark \ref{rem regu} below)
\begin{equation}\label{hyp l2}
\left\{\begin{array}{l}
\ds -1 \leq \lambda_2(x)<0, \quad \forall x \in [0,1], \\
\ds \lambda_2(x)=-1, \quad \forall x \in \left[0,\frac{1}{2}\right], \\
\ds -\int_{\frac{1}{2}}^1 \frac{1}{\lambda_2(\xi)} \, d\xi=\frac{3}{2},
\end{array}\right. 
\qquad
\left\{\begin{array}{l}
\ds a(x)=0, \quad \mbox{ a.e. } x \in \left(\frac{1}{2},1\right), \\
\ds \int_0^{\frac{1}{2}} a(x) \, dx=\frac{\pi}{2}.
\end{array}\right. 
\end{equation}
Note that we are in the case $p=m=2$, the parameters $\Lambda,M$ and $Q$ are
$$\Lambda(x)=
\begin{pmatrix}
-1 & 0 & 0 & 0 \\
0 & \lambda_2(x) & 0 & 0 \\
0 & 0 & \frac{1}{2} & 0 \\
0 & 0 & 0 & 1
\end{pmatrix}
, \quad
M(x)=
\begin{pmatrix}
0 & a(x) & 0 & 0 \\
-a(x) & 0 & 0 & 0 \\
0 & 0 & 0 & 0 \\
0 & 0 & 0 & 0
\end{pmatrix}
, \quad
Q=
\begin{pmatrix}
1 & 0 \\
0 & 1
\end{pmatrix}
,
$$
and the times are
$$T_1(\Lambda)=1,\quad T_2(\Lambda)=2, \quad T_3(\Lambda)=2, \quad T_4(\Lambda)=1.$$
Clearly, the assumption \eqref{hyp Rus78 eg} is not satisfied here.
We then have the following result:
\begin{proposition}\label{prop counterex}
Let $\lambda_2 \in C^{0,1}([0,1])$ and $a \in L^{\infty}(0,1)$ satisfy \eqref{hyp l2}.
Then, the system \eqref{4 dimen sys}-\eqref{4 dimen sys BC} is exactly controllable in time $T$ if, and only if $T\geq 4$.
\end{proposition}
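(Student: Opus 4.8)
The plan is to prove the two implications separately; the forward implication (sufficiency) is soft, while the converse (necessity) is where the failure of \eqref{hyp Rus78 eg} becomes visible. For the sufficiency I would simply observe that $\rank Q=2=p$ and that $T\ge 4=T_2(\Lambda)+T_3(\Lambda)=T_p(\Lambda)+T_{p+1}(\Lambda)$, so that exact controllability in time $T$ follows from the classical uniform result recalled in the introduction, which holds for arbitrary $M\in L^\infty(0,1)^{n\times n}$ and any $Q$ with $\rank Q=p$, and does not require \eqref{hyp Rus78 eg} (see e.g. \cite[Theorem 3.2]{Rus78} or \cite[Theorem 3.2]{Li10}).

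For the necessity I would argue by contraposition: fix $T<4$, take $y^0=0$, and show that for every control $u=(u_1,u_2)\in L^2(0,+\infty)^2$ the second component of the corresponding solution satisfies $y_2(T,x)=0$ for every $x$ in some left-neighborhood $(1-\delta,1)$ of $x=1$. Since this holds for all $u$, no target $y^1$ with $y^1_2$ not vanishing a.e. near $x=1$ can be reached from $y^0=0$, and exact controllability in time $T$ fails. The computation is carried out with the method of characteristics, the weak solution being given by the characteristic formulas exactly as exploited in Section \ref{sect cpct uniq}; for this particular $M$ no complication arises inside the coupled block because the two coupled speeds coincide on $[0,\tfrac12]$.

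The key observation is precisely this: on $[0,\tfrac12]$ one has $\lambda_1=\lambda_2=-1$, so the first two equations of \eqref{4 dimen sys} are coupled along one and the same family of characteristics $s\mapsto(s,x_0+(s-s_0))$, along which $(y_1,y_2)$ solves the linear ODE $\tfrac{d}{ds}\begin{pmatrix}y_1\\y_2\end{pmatrix}=a(x_0+(s-s_0))\begin{pmatrix}0&1\\-1&0\end{pmatrix}\begin{pmatrix}y_1\\y_2\end{pmatrix}$, whose flow is the planar rotation of angle equal to the accumulated integral of $a$ along the characteristic. A characteristic issued from $x=0$ and reaching $x=\tfrac12$ accumulates exactly the angle $\int_0^{1/2}a(\xi)\,d\xi=\tfrac\pi2$ (because the characteristic has speed $1$ on $[0,\tfrac12]$ and $\int_0^{1/2}a=\tfrac\pi2$), and since the rotation of angle $\tfrac\pi2$ is the matrix $\begin{pmatrix}0&1\\-1&0\end{pmatrix}$, the value of $y_2$ reaching $x=\tfrac12$ equals minus the value of $y_1$ at the corresponding starting point of $x=0$. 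On $(\tfrac12,1)$ one has $a\equiv 0$, so there all four equations are decoupled pure transports. I would then track $y_2(T,x)$ for $x\in(\tfrac12,1)$ using $\phi_2(x)=-\int_0^x\lambda_2(\xi)^{-1}\,d\xi$ (so $\phi_2$ is increasing and $\phi_2(1)=T_2(\Lambda)=2$): tracing the characteristic of $y_2$ through $(T,x)$ backwards, either $T<\phi_2(x)$, in which case it meets the time level $0$ before reaching $x=0$ and $y_2(T,x)$ is given by the (zero) initial datum; or $T\ge\phi_2(x)$, in which case it reaches $x=0$ at time $T-\phi_2(x)\ge0$ and, combining the transport on $(\tfrac12,1)$, the rotation identity on $[0,\tfrac12]$ and the boundary condition $y_1(\cdot,0)=y_3(\cdot,0)$, one gets $y_2(T,x)=-y_3(T-\phi_2(x),0)$. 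Since $y_3$ enters at $x=1$ with transit time $T_3(\Lambda)=2$, one has $y_3(s,0)=u_1(s-2)$ for $s>2$ and $y_3(s,0)=0$ for $0\le s\le 2$; and as $\phi_2(1)=2>T-2$, continuity yields $\delta\in(0,\tfrac12)$ with $\phi_2(x)>T-2$ for $x\in(1-\delta,1)$, so for such $x$ one has $T-\phi_2(x)<2$ whenever $T\ge\phi_2(x)$, and therefore $y_2(T,x)=0$ in all cases, as claimed.

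The ``hard part'', such as it is, is conceptual: one must recognize that the equal-speed coupling on $[0,\tfrac12]$ produces a genuine rotation and that choosing $\int_0^{1/2}a=\tfrac\pi2$ turns it into a ``swap'', so that the only surviving dependence of $y_2$ near $x=1$ is on $u_1$, which is forced to travel first through $y_3$ and then through $y_2$, whence the transit time $T_3(\Lambda)+T_2(\Lambda)=4$; the a priori faster route $u_2\to y_4\to y_2$ (transit time $3$) is entirely redirected into the component $y_1$ by the swap and contributes nothing to $y_2$. For any other value of the total angle, $y_2$ near $x=1$ would also depend on $u_2$ with transit time $3$, and the obstruction would disappear for $T$ close to $4$. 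The remaining steps — justifying the characteristic representation of the weak solution for this $M$, which is standard given the block structure and the equal speeds, and the elementary bookkeeping of the interval $(1-\delta,1)$ — are routine.
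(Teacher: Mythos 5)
Your proof is correct, and it rests on exactly the same structural observation as the paper's: because $\lambda_1=\lambda_2=-1$ on $[0,\tfrac12]$ the coupled block $(y_1,y_2)$ is transported along a common characteristic and undergoes a planar rotation of angle $\int_0^{1/2}a=\tfrac\pi2$, which amounts to the swap $(y_1,y_2)\mapsto(y_2,-y_1)$, so that $y_2$ at $x=\tfrac12$ (and hence near $x=1$) depends on the boundary only through $y_1(\cdot,0)=y_3(\cdot,0)$, i.e. through $u_1$ with a total transit time $T_3(\Lambda)+T_2(\Lambda)=4$. The sufficiency step is identical (invoke the uniform time $T_p(\Lambda)+T_{p+1}(\Lambda)=4$).

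The only genuine difference is how you package the obstruction for $T<4$. The paper takes $y^0\ne 0$ with $y^0_3$ a carefully placed indicator, transports it through the boundary and the $\tfrac\pi2$-rotation, and concludes that $y_2(T,\cdot)=-1$ on a nonempty open set no matter the control, contradicting approximate \emph{null} controllability (and hence exact controllability). You instead take $y^0=0$ and show $y_2(T,\cdot)\equiv0$ on a left-neighborhood of $x=1$ for every control, so the reachable set from the origin lies in a proper closed subspace, hence exact (and even approximate) controllability fails. These are two realizations of the very same computation, run in opposite directions (propagate a nonzero datum versus propagate the zero datum), and they are of comparable length and difficulty; neither requires the $T\ge\tfrac52$ normalization the paper uses for convenience, although the paper's choice makes its bookkeeping slightly cleaner. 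One minor remark: in your formula $y_2(T,x)=-y_3(T-\phi_2(x),0)$ it is worth spelling out, as you do implicitly, the three regimes of the backward characteristic through $(T,x)$ (it exits at $t=0$ in $(\tfrac12,1)$, it exits at $t=0$ in $[0,\tfrac12]$, or it reaches $x=0$), since the first two cases are handled by $y^0=0$ alone while the third needs both the rotation identity and $T-\phi_2(x)<2$.
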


\begin{remark}
The time in Proposition \ref{prop counterex} is in fact the worst possible control time $T_p(\Lambda)+T_{p+1}(\Lambda)$.
Note that we are in the best possible situation for $Q$ though (see Remark \ref{rem optimality}).
Let us also recall that Theorem \ref{thm unp syst} shows that the system \eqref{4 dimen sys}-\eqref{4 dimen sys BC} with $a=0$ is exactly controllable in time $T$ if, and only if, $T\geq \max \ens{T_3(\Lambda),T_1(\Lambda)+T_3(\Lambda), T_2(\Lambda)+T_4(\Lambda)}=3$.
Thus, we see that, while assuming \eqref{hyp Rus78 eg} a bounded perturbation can not produce a system that is not exactly controllable after the time $\Topt{0}+\epsilon$, whatever how small $\epsilon>0$ is (by Theorem \ref{thm M to zero}), Proposition \ref{prop counterex} shows that the situation is much worse if we try to drop this assumption.
\end{remark}

\begin{remark}\label{rem regu}
Let us mention that this counterexample is not linked to the regularity of the data.
Indeed, we can always construct smooth functions $\lambda_2$ and $a$ such that \eqref{hyp l2} is satisfied.
We can take for instance
$$\lambda_2(x)=-e^{-C_1 \eta(x)}, \qquad a(x)=C_2\eta(1-x),$$
where $\eta \in C^{\infty}(\R)$ is
$$
\eta(x)=
\left\{\begin{array}{cl}
0 & \mbox{ if } x \leq \frac{1}{2}, \\
e^{\frac{1}{\frac{1}{2}-x}} & \mbox{ if } x>\frac{1}{2},
\end{array}\right. 
$$
and $C_1,C_2>0$ are suitable constants to ensure that $-\int_{\frac{1}{2}}^1 \frac{1}{\lambda_2(\xi)} \, d\xi=\frac{3}{2}$ and $\int_0^{\frac{1}{2}} a(x) \, dx=\frac{\pi}{2}$.
\end{remark}

\begin{proof}[Proof of Proposition \ref{prop counterex}] 
~
\begin{enumerate}[1)]
\item
The sufficiency is known since $4=T_p(\Lambda)+T_{p+1}(\Lambda)$.
As already mentioned in the introduction, this was proved for instance in \cite[Theorem 3.2]{Rus78} (with a slightly different boundary condition at $x=1$), see also \cite[Theorem 3.2]{Li10}.

\item
Let us now show that the system \eqref{4 dimen sys}-\eqref{4 dimen sys BC} is not even approximately null controllable in time $T$ if $T<4$.
Since such a property is true in time $T_2$ if it is true in time $T_1 \leq T_2$, it is sufficient to prove it when
$$\frac{5}{2} \leq T< 4.$$
Let $y^0 \in L^2(0,1)^4$ be any initial data with its third component being
\begin{equation}\label{CE idata}
y^0_3(x)
=
\left\{\begin{array}{ll}
\ds 1 &\ds  \mbox{ if } x \in \left(\frac{T-2}{2},1\right), \\
\ds 0 &\ds  \mbox{ otherwise. }
\end{array}\right. 
\end{equation}
Note that it is well-defined since $T<4$.
We argue by contradiction and assume that, for every $\epsilon>0$, there exist controls $u_1,u_2 \in L^2(0,+\infty)$ such that the corresponding solution $y\in C^0([0,+\infty); L^2(0,1)^4)$ to the initial-boundary value problem \eqref{4 dimen sys}-\eqref{4 dimen sys BC}-\eqref{CE idata} satisfies
\begin{equation}\label{zerofinaldata}
\norm{y(T)}_{L^2(0,1)^4} \leq \epsilon.
\end{equation}

\item
Let us show how we obtain a contradiction.
We refer to Figure \ref{figureLong} to clarify the geometric situation.
Since the equation satisfied by $y_3$ is not coupled with the other ones, using the method of characteristics and the fact that $y^0_3=1$ in $(\frac{T-2}{2},1)$, we see that
$$y_3(t,0)=1, \quad \mbox{ a.e. } t \in \left(T-2,2\right).$$
The boundary condition $y_1(t,0)=y_3(t,0)$ then immediately yields
\begin{equation}\label{eq yuz}
y_1(t,0)=1, \quad \mbox{ a.e. } t \in \left(T-2,2\right).
\end{equation}

On the other hand, since the equations of $(y_1,y_2)$ are not coupled with the equations of $(y_3,y_4)$, and since $\lambda_2(x)=-1$ for every $x\in [0,\frac{1}{2}]$ by construction \eqref{hyp l2}, the method of characteristics shows that the solution $(y_1,y_2)$ of the corresponding sub-system satisfies
\begin{equation}\label{sol subsyst}
\begin{pmatrix}
y_1(t,x)\\
y_2(t,x)
\end{pmatrix}
=e^{\left(\int_0^x a(\xi)d\xi\right) J}
\begin{pmatrix}
y_1(t-x,0)\\
y_2(t-x,0)
\end{pmatrix},
\end{equation}
for every $x \in [0,\frac{1}{2}]$ and a.e. $t \in (0,T)$ such that $t-x \in (0,T)$, where
$$J=\begin{pmatrix} 0 & 1 \\ -1 & 0 \end{pmatrix}.$$

Since $\int_0^{\frac{1}{2}} a(\xi)d\xi=\frac{\pi}{2}$ by construction \eqref{hyp l2}, we have
$$e^{\left(\int_0^{\frac{1}{2}} a(\xi)d\xi\right) J}
=e^{\frac{\pi}{2}J}=J.$$
Therefore, taking $x=\frac{1}{2}$ in \eqref{sol subsyst}, we obtain
$$
\begin{pmatrix}
y_1\left(t,\frac{1}{2}\right)\\
y_2\left(t,\frac{1}{2}\right)
\end{pmatrix}=
J
\begin{pmatrix}
y_1\left(t-\frac{1}{2},0\right) \\
y_2\left(t-\frac{1}{2},0\right)
\end{pmatrix}
=
\begin{pmatrix}
y_2\left(t-\frac{1}{2},0\right) \\
-y_1\left(t-\frac{1}{2},0\right)
\end{pmatrix}
,
$$
for a.e. $t \in (0,T)$ such that $t-\frac{1}{2} \in (0,T)$.
In particular,
$$
y_2\left(t,\frac{1}{2}\right)=-y_1\left(t-\frac{1}{2},0\right), \quad \mbox{ a.e. } t \in \left(\frac{1}{2},T\right),
$$
and it follows from \eqref{eq yuz} that (note that $(T-2,2) \subset (\frac{1}{2},T)$ since $T \geq \frac{5}{2}$)
\begin{equation}\label{yd mu}
y_2\left(t,\frac{1}{2}\right)=-1, \quad \mbox{ a.e. } t \in \left(T-\frac{3}{2},\frac{5}{2}\right).
\end{equation}

On the other hand, since $a=0$ in $(\frac{1}{2},1)$ by construction \eqref{hyp l2}, the equation $y_2$ is not coupled with the equation of $y_1$ in $(0,T) \times (\frac{1}{2},1)$.
As a result, we can use the method of characteristics and obtain (recall the definition \eqref{caract} of $\chi_2$)
$$y_2\left(T,\caract_2\left(T;t,\frac{1}{2}\right)\right)=y_2\left(t,\frac{1}{2}\right), \quad \mbox{ a.e. } t \in \left(T-\frac{3}{2},T\right).$$
Using \eqref{yd mu}, we deduce that
$$y_2\left(T,x\right)=-1, \quad \mbox{ a.e. } x \in \omega,$$
where $\omega \subset (0,1)$ is the non empty open subset defined by $\omega=\ens{\caract_2(T;t,\frac{1}{2}) \, \middle| \, t \in (T-\frac{3}{2},\frac{5}{2})}$.
It then follows that \eqref{zerofinaldata} is not possible if $\epsilon<1$, a contradiction.

\end{enumerate}
\end{proof}

\begin{center}

\scalebox{0.75}{
\begin{tikzpicture}[scale=4]

\fill [gray!10]
	(0,0) rectangle (0.5,3.5);
      
\draw [<->] (0,4) node [left]  {$t$} -- (0,0) -- (1.5,0) node [below right] {$x$};
\draw (0,0) rectangle (1,3.5);
\draw (-0.1,3.5) node [left] {$T$};
\draw (0,0) node [left, below] {$0$};
\draw (1,0) node [below] {$1$};

\draw (0.5,0) -- (0.5,3.5);
\draw (0.5,0) node [below] {$\frac{1}{2}$};

\draw (0,2) node [left] {$2$};
\draw (0,1.5) node [left] {$T-2$};
\draw (0,1.5) node [left] {$T-2$};

\draw[dashed, domain=0:1] plot (\x, {2*(1-\x)});
\draw[dashed, domain=0:0.75] plot (\x, {2*(0.75-\x)});

\draw (0.75,0) node [below] {$\frac{T-2}{2}$};

\draw[ultra thick, domain=0:0.5] plot (\x, {2+\x});
\draw[ultra thick, domain=0:0.5] plot (\x, {1.5+\x});

\draw[decoration={brace, amplitude=6pt},decorate]
(0.05,3.55) -- node[above=4pt] {$\lambda_2=-1$} (0.45,3.55);
\draw[decoration={brace, amplitude=6pt},decorate]
(0.55,3.55) -- node[above=4pt] {$a=0$} (0.95,3.55);

\draw[decoration={brace, amplitude=3pt},decorate]
(0.75,0.05) -- (0.95,0.05);
\draw (0.85,0.05) node[above] {$y^0_3=1$};

\draw[decoration={brace, amplitude=6pt},decorate]
(-0.05,1.60) -- node[left=4pt] {$y_1(t,0)=y_3(t,0)=1$} (-0.05,1.90);

\draw[decoration={brace, mirror, amplitude=6pt},decorate]
(0.55,2.10) --  (0.55,2.45);

\draw[<-] (0.625,2.275)-- (1.1,2.275) node[right] {$y_2(t,\frac{1}{2})=-1$};

\draw[dashed] (0.5,2.5) -- (1,2.5) node[right] {$\frac{5}{2}$};
\draw[dashed] (0.5,2) -- (1,2) node[right] {$T-\frac{3}{2}$};

\coordinate (A) at (0.5,1.9995152);
\coordinate (B) at (0.6,2.0995226);
\coordinate (C) at (0.7,2.2034819);
\coordinate (D) at (0.8,2.350154);
\coordinate (E) at (0.9,2.664025);
\coordinate (F) at (1,3.5);
\draw[ultra thick] plot [smooth] coordinates { (A) (B) (C) (D) (E) (F)};

\coordinate (A') at (0.5,2.50001);
\coordinate (B') at (0.6,2.6000174);
\coordinate (C') at (0.7,2.703978);
\coordinate (D') at (0.8,2.8506586);
\coordinate (E') at (0.9,3.1645581);
\coordinate (F') at (0.953,3.5020381);
\draw[ultra thick] plot [smooth] coordinates { (A') (B') (C') (D') (E') (F')};

\draw[->] (1.2,3.7) node[right] {$\abs{y_2(T,x)}=1>\epsilon$} -- (0.98,3.525);

      \end{tikzpicture}
      }

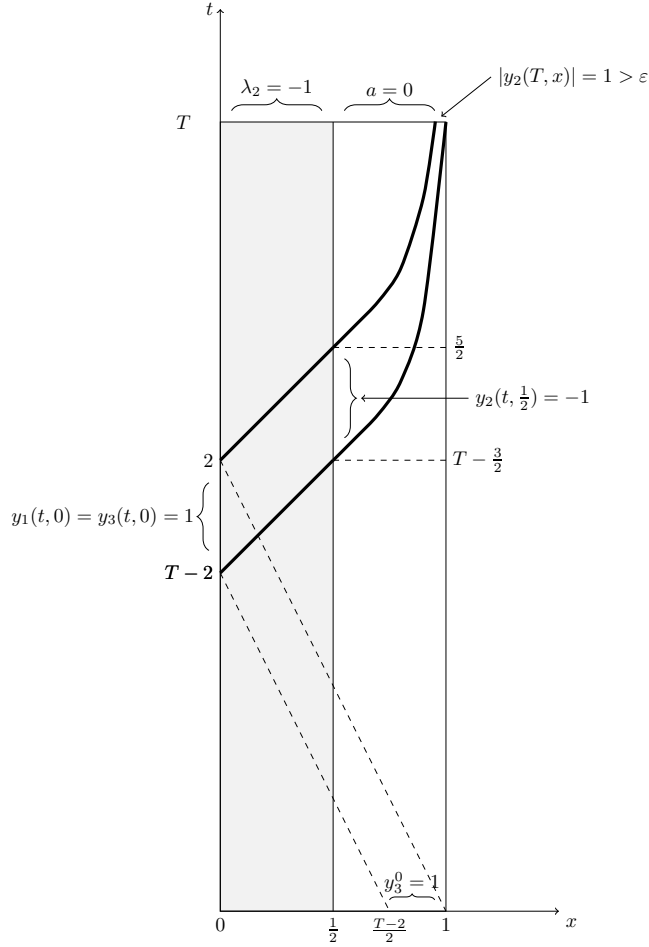
\captionof{figure}{Counterexample if \eqref{hyp Rus78 eg} fails}\label{figureLong}

\end{center}

\section{Sufficient conditions for the stability of the minimal time of control}\label{app impmap cpct}

In this appendix we prove Theorem \ref{cor DO18} and Lemma \ref{lem small times}, which provide practical sufficient conditions to ensure that the minimal time for exact controllability is invariant under bounded perturbations of the generator.
The proof is based on the compactness-uniqueness method and the Volterra integral equation satisfied by semigroups of boundedly perturbed generators.

Let us first briefly recall that the compactness-uniqueness method has been extensively used to prove the exact controllability of various systems governed by partial differential equations, see in particular \cite{Lio88} and the pioneering work \cite{RT74} concerning stability, and it has recently been improved and put in a complete abstract framework in \cite{DO18}.
We refer to the latter article and the numerous references therein for more details on this method.
We only wish to add the references \cite[Corollary 4.2]{DR77} and \cite[p. 657, p. 659]{Rus78} to those already present in \cite{DO18}.
The proof of Theorem \ref{cor DO18} is in fact a simple consequence of the following general abstract result, established in \cite[Theorem 4.1]{DO18}:

\begin{theorem}\label{thm DO18}
Let $H$ and $U$ be two complex Hilbert spaces.
Let $A:\dom{A} \subset H \longrightarrow H$ be the generator of a $C_0$-semigroup on $H$ and let $B \in \lin{U,\dom{A^*}'}$ be admissible for $A$.
Let $\Phi(T) \in \lin{L^2(0,+\infty;U),H}$ be the input map of $(A,B)$ at time $T \geq 0$.
Assume that there exist $T_0>0$, a complex Hilbert space $\widehat{H}$, a compact operator $G \in \lin{H,\widehat{H}}$ and $C>0$ such that, for every $z^1 \in H$,
\begin{equation}\label{obs ineq plus cpct}
\norm{z^1}_H^2 \leq C\left(\int_0^{T_0} \norm{\Phi(T_0)^*z^1(t)}_U^2 \, dt+\norm{G z^1}_{\widehat{H}}^2\right).
\end{equation}
Assume moreover that $(A,B)$ satisfies the Fattorini-Hautus test.
Then, $(A,B)$ is exactly controllable in time $T$ for every $T>T_0$.
\end{theorem}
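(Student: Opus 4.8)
The plan is to upgrade the \emph{conditional} observability estimate \eqref{obs ineq plus cpct}, which only controls $\norm{z^1}_H$ up to the compact lower-order term $\norm{Gz^1}_{\widehat H}$, into the genuine observability inequality \eqref{obs} at every time $T>T_0$; by the duality between controllability and observability recalled in Section~\ref{sect abst}, this yields exact controllability of $(A,B)$ in every such time $T$. This is the classical compactness--uniqueness scheme, and the Fattorini--Hautus test (cf. \eqref{FH test}) will enter precisely to rule out the would-be ``invisible'' solutions of the adjoint system.

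First I would fix $T>T_0$ and argue by contradiction, assuming there are $z^1_k\in H$ with $\norm{z^1_k}_H=1$ and $\int_0^T\norm{\Phi(T)^*z^1_k(t)}_U^2\,dt\to 0$. The key elementary fact is the translation identity $\Phi(T)^*z(t)=B^*S_A(T-t)^*z=\Phi(T_0)^*z\bigl(t-(T-T_0)\bigr)$ for $t\in(T-T_0,T)$, which gives $\int_0^{T_0}\norm{\Phi(T_0)^*z}_U^2\,dt\le\int_0^T\norm{\Phi(T)^*z}_U^2\,dt$ for all $z\in H$; hence also $\int_0^{T_0}\norm{\Phi(T_0)^*z^1_k}_U^2\,dt\to 0$. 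Passing to a subsequence, $z^1_k\rightharpoonup z^1$ weakly in $H$, so $Gz^1_k\to Gz^1$ strongly since $G$ is compact; applying \eqref{obs ineq plus cpct} to the differences $z^1_k-z^1_j$ shows that $(z^1_k)$ is Cauchy, hence $z^1_k\to z^1$ strongly with $\norm{z^1}_H=1$, and, by boundedness of $\Phi(T)^*$, with $\Phi(T)^*z^1=0$. So the unobservable subspace $N_T:=\ker\Phi(T)^*$ is non-trivial. Moreover, for any $\tau>T_0$ every $z\in\ker\Phi(\tau)^*\subseteq\ker\Phi(T_0)^*$ satisfies $\norm{z}_H^2\le C\norm{Gz}_{\widehat H}^2$ by \eqref{obs ineq plus cpct}; the same estimate passes to the closure of $\bigcup_{\tau>T_0}\ker\Phi(\tau)^*$, so the identity on that closed subspace factors through the compact operator $G$ and is therefore compact, whence $\dim\ker\Phi(\tau)^*\le d_0<\infty$ uniformly in $\tau>T_0$.

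The core of the argument is to produce from $N_T$ an eigenvector of $A^*$. Since $\tau\mapsto\dim\ker\Phi(\tau)^*$ is non-increasing, integer-valued and bounded on $(T_0,T]$, it is constant equal to some $d\ge 1$ on an interval $(\tau_0,T)$; the nested kernels there coincide with a single finite-dimensional subspace $N$, and the inclusion $S_A(s)^*\ker\Phi(\tau)^*\subseteq\ker\Phi(\tau-s)^*$ together with the semigroup law shows that $N$ is invariant under $(S_A(t)^*)_{t\ge0}$. The restriction of this $C_0$-semigroup to the finite-dimensional space $N$ is norm-continuous, hence has a bounded generator, which is the restriction of $A^*$; in particular $N\subseteq\dom{A^*}$ and $A^*(N)\subseteq N$. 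As $H$ is a complex Hilbert space, $A^*|_N$ admits an eigenvector $z^1\in N\setminus\{0\}$, say $A^*z^1=\lambda z^1$, so that $S_A(t)^*z^1=e^{\lambda t}z^1$. But $z^1\in\ker\Phi(\tau)^*$ forces $B^*S_A(t)^*z^1=0$ for a.e.\ $t$, hence $B^*z^1=0$; thus $z^1\in\ker(\lambda-A^*)\cap\ker B^*=\{0\}$ by the Fattorini--Hautus test, a contradiction. This establishes \eqref{obs} at every $T>T_0$, and the theorem follows.

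The main obstacle, and the only genuinely non-routine point, is exactly this last step: extracting a non-zero eigenvector of $A^*$ from the unobservable subspace. It combines finite-dimensionality of that subspace (a Rellich-type consequence of the compactness of $G$ and of the a~priori estimate \eqref{obs ineq plus cpct}) with its invariance under the adjoint semigroup, where the explicit structure of $\Phi(T)^*$ through $B^*S_A(\cdot)^*$ is essential. The remaining ingredients --- the translation identity, the weak-to-strong compactness argument, and the final invocation of the Fattorini--Hautus test --- are standard.
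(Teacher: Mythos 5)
The paper does not actually prove Theorem~\ref{thm DO18}: it explicitly quotes it from \cite[Theorem 4.1]{DO18} and proceeds to \emph{use} it (for Theorem~\ref{cor DO18} and, via Lemma~\ref{lem small times}, for Theorem~\ref{thm M to zero}). There is therefore no in-paper proof against which to compare yours in detail; the comparison must be with the general compactness--uniqueness scheme that underlies \cite{DO18}, and your argument is a correct and complete self-contained instance of it.

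Going through the steps: the translation identity $\Phi(T)^*z(t)=\Phi(T_0)^*z(t-(T-T_0))$ for $t\in(T-T_0,T)$ is correct (both sides equal $B^*S_A(T-t)^*z$), and it does give $\int_0^{T_0}\|\Phi(T_0)^*z\|_U^2\le\int_0^{T}\|\Phi(T)^*z\|_U^2$. The weak-to-strong upgrade via the compactness of $G$, applied to differences of the minimizing sequence, is the standard Lions-type argument and correctly produces a nonzero $z^1\in\ker\Phi(T)^*$ if observability at time $T$ fails. Finite-dimensionality of $\ker\Phi(T_0)^*$ (hence of all $\ker\Phi(\tau)^*$, $\tau\ge T_0$, which are nested inside it) follows cleanly from $\|z\|_H\le C\|Gz\|_{\widehat H}$ on this subspace together with the compactness of $G$. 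The constancy of $\tau\mapsto\dim\ker\Phi(\tau)^*$ on a left neighborhood of $T$ can even be justified directly via left-continuity (the nested intersection of finite-dimensional subspaces stabilizes), so the interval $(\tau_0,T)$ exists and the common kernel $N\supseteq\ker\Phi(T)^*$ is nontrivial. The invariance $S_A(s)^*N\subseteq N$ via $S_A(s)^*\ker\Phi(\tau)^*\subseteq\ker\Phi(\tau-s)^*$ is correct, the passage to a bounded generator on the finite-dimensional invariant subspace and hence $N\subseteq\dom{A^*}$ with $A^*(N)\subseteq N$ is standard, and the final invocation of the Fattorini--Hautus test to contradict the existence of an eigenvector of $A^*|_N$ in $\ker B^*$ (using continuity of $t\mapsto B^*S_A(t)^*z^1$ for $z^1\in\dom{A^*}$ to get $B^*z^1=0$) is exactly right. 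In short: the proposal is correct, and since the paper itself offers no proof, this is a genuine and valid reconstruction of the result.
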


Let us now give the proof of Theorem \ref{cor DO18}.
In what follows, we use the notation introduced in the statement of Theorem \ref{cor DO18}.

\begin{proof}[Proof of Theorem \ref{cor DO18}]
We first prove that $\Toptlib{A_2,B} \leq \Toptlib{A_1,B}$.
Let then $T_1>0$ be such that $(A_1,B)$ is exactly controllable in time $T_1$ and let us show that necessarily $\Toptlib{A_2,B} \leq T_1$.
By assumption and duality there exists $C>0$ such that, for every $z^1 \in H$,
$$
\norm{z^1}_{H}^2 \leq C\int_0^{T_1} \norm{\Phi_1(T_1)^*z^1(t)}_{U}^2 \, dt,
$$
so that,
$$
\norm{z^1}_{H}^2 \leq 2C\left(\int_0^{T_1} \norm{\Phi_2(T_1)^*z^1(t)}_{U}^2 \, dt
+
\int_0^{T_1} \norm{\left(\Phi_1(T_1)^*-\Phi_2(T_1)^*\right)z^1(t)}_{U}^2 \, dt\right).
$$
By assumption we know that the remainder $G=\Phi_1(T_1)^*-\Phi_2(T_1)^*$ is compact and that $(A_2,B)$ satisfies the Fattorini-Hautus test.
Therefore, we can apply Theorem \ref{thm DO18} and obtain that $(A_2,B)$ is exactly controllable in time $T_1+\epsilon$ for every $\epsilon>0$.
This shows that $\Toptlib{A_2,B} \leq T_1+\epsilon$ for every $\epsilon>0$.
Letting $\epsilon \to 0$ we obtain the claim.
The proof of the reversed inequality $\Toptlib{A_2,B} \geq \Toptlib{A_1,B}$ is exactly the same by simply changing the roles of $(A_2,B)$ and $(A_1,B)$.
\end{proof}

Let us now turn out to the proof of Lemma \ref{lem small times}.
First of all, we shall establish the following result:

\begin{proposition}\label{thm DO18++}
Under the framework of Theorem \ref{cor DO18} (we do not assume \ref{FHtest} and \ref{impmap cpct} here though), we assume that:
\begin{enumerate}[label={(\roman*)$''$}]
\setcounter{enumi}{1}
\item\label{estim BVolt}
For every $T>0$, there exist a Hilbert space $\widetilde{H}$, a compact operator $F \in \lin{H,\widetilde{H}}$ and $C>0$ such that
$$
\ds \int_0^T \norm{B^*V\tilde{z}(t)}_{U}^2 \, dt
+
\int_0^T \norm{V\tilde{z}(t)}_{H}^2 \, dt
\leq C \norm{F z^0}_{\widetilde{H}}^2, \quad \forall z^0 \in \dom{A_1^*},
$$
where $V\tilde{z}(t)=\int_0^t K(t,s)\tilde{z}(s) \, ds$ is the Volterra operator with kernel $K(t,s)=S_{A_1}(t-s)^*P^*$ and $\tilde{z}(t)=S_{A_1}(t)^*z^0$.
\end{enumerate}
Then, the assumption \ref{impmap cpct} of Theorem \ref{cor DO18} holds, i.e. $\Phi_1(T)^*-\Phi_2(T)^*$ is compact for every $T>0$.
\end{proposition}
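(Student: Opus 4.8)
The plan is to show directly that the difference $\Phi_1(T)^*-\Phi_2(T)^*$ factors through a compact operator. First I would recall the Volterra integral equation relating the two adjoint semigroups: since $A_2^*=A_1^*+P^*$ with $P^*\in\lin{H}$, the variation of constants formula gives
\begin{equation}\label{volterra eq}
S_{A_2}(t)^*z^0=S_{A_1}(t)^*z^0+\int_0^t S_{A_1}(t-s)^*P^*S_{A_2}(s)^*z^0\,ds,\quad \forall z^0\in H,\ t\geq 0.
\end{equation}
Writing $\tilde z(t)=S_{A_1}(t)^*z^0$ and iterating \eqref{volterra eq} once (or, more cleanly, observing that the resolvent kernel of a Volterra equation with bounded kernel exists and is bounded), one expresses $S_{A_2}(t)^*z^0-S_{A_1}(t)^*z^0$ as $V\tilde z(t)$ plus a term of the form $\int_0^t R(t,s)V\tilde z(s)\,ds$ for a bounded operator-valued kernel $R$; in other words $S_{A_2}(t)^*z^0-S_{A_1}(t)^*z^0=(I+\mathcal R)V\tilde z(t)$ where $\mathcal R$ is a bounded Volterra operator on $L^2(0,T;H)$ and $V\tilde z$ denotes the map $t\mapsto V\tilde z(t)$. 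Both the identity and the boundedness of $\mathcal R$ are standard from the admissibility of $B$ and the finiteness of the relevant semigroup bounds on $[0,T]$.

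Next I would use the explicit description of $\Phi_i(T)^*$ recalled in Section \ref{sect abst}: for $z^1\in\dom{A_i^*}$ it is the function $t\mapsto B^*S_{A_i}(T-t)^*z^1$ on $(0,T)$, extended by zero. Hence for $z^1\in\dom{A_1^*}=\dom{A_2^*}$,
\begin{equation}\label{diff impmap}
\left((\Phi_1(T)^*-\Phi_2(T)^*)z^1\right)(t)=-B^*\left(S_{A_2}(T-t)^*-S_{A_1}(T-t)^*\right)z^1=-B^*(I+\mathcal R)V\tilde z(T-t),
\end{equation}
with $\tilde z(s)=S_{A_1}(s)^*z^1$. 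By assumption \ref{estim BVolt}, the map $z^1\mapsto\left(t\mapsto B^*V\tilde z(t)\right)$ and the map $z^1\mapsto\left(t\mapsto V\tilde z(t)\right)$ are, after composition with the continuous change of variable $t\mapsto T-t$, bounded from $H$ into $L^2(0,T;U)$ and $L^2(0,T;H)$ respectively, and moreover dominated by $\norm{Fz^1}_{\widetilde H}$ with $F$ compact. The presence of the bounded Volterra operator $\mathcal R$ (which maps $L^2(0,T;H)$ into itself and, composed with $B^*$ acting pointwise after one more application of the semigroup, into $L^2(0,T;U)$ using admissibility once more) does not spoil this: we still get a bound of the form $\norm{(\Phi_1(T)^*-\Phi_2(T)^*)z^1}_{L^2(0,T;U)}\leq C\norm{Fz^1}_{\widetilde H}$ for all $z^1\in\dom{A_1^*}$, hence for all $z^1\in H$ by density and the boundedness of $\Phi_i(T)^*$.

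Finally, to conclude compactness of $\Phi_1(T)^*-\Phi_2(T)^*$ from the inequality $\norm{(\Phi_1(T)^*-\Phi_2(T)^*)z^1}_{L^2(0,T;U)}\leq C\norm{Fz^1}_{\widetilde H}$ with $F$ compact, I would use the standard fact (see e.g. the argument in \cite{DO18}) that a bounded operator dominated in norm by a compact operator is itself compact: if $z^1_k\rightharpoonup 0$ weakly in $H$, then $Fz^1_k\to 0$ strongly in $\widetilde H$ by compactness of $F$, whence $(\Phi_1(T)^*-\Phi_2(T)^*)z^1_k\to 0$ strongly. Thus $\Phi_1(T)^*-\Phi_2(T)^*$ is compact for every $T>0$, which is exactly assumption \ref{impmap cpct} of Theorem \ref{cor DO18}. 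I expect the main obstacle to be the careful bookkeeping in the reduction from \eqref{volterra eq} to the factored form $(I+\mathcal R)V\tilde z$ together with verifying that the extra Volterra term $\mathcal R$ preserves the domination by $\norm{Fz^1}_{\widetilde H}$ rather than merely by $\norm{z^1}_H$; the rest is routine once the admissibility estimates of Section \ref{sect abst} are invoked.
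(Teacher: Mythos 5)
Your plan is correct and coincides in substance with the paper's proof: both rest on the Volterra integral equation for $S_{A_2}^*-S_{A_1}^*$, bounded invertibility of $\Id-V$ on $L^2(0,T;H)$, the admissibility estimate for $B^*\int_0^t S_{A_1}(t-s)^*f(s)\,ds$, hypothesis \ref{estim BVolt}, and the fact that an operator dominated in norm by a compact operator is itself compact. Where the paper keeps the bookkeeping simpler---and sidesteps the domain issue you flag as the main obstacle---is by applying $B^*$ directly to the two terms of the Volterra identity rather than to the abstract $(\Id+\mathcal{R})V\tilde{z}(t)$, with Lemma~\ref{lem NRL86} supplying the $L^2$-bound $\norm{S_{A_1}(\cdot)^*z^0-S_{A_2}(\cdot)^*z^0}_{L^2(0,T;H)}\le C\norm{V\tilde{z}}_{L^2(0,T;H)}$.
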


\begin{remark}
As in Lemma \ref{lem small times}, the assumption \ref{estim BVolt} in Proposition \ref{thm DO18++} only concerns the semigroup of the unperturbed system $(A_1,B)$.
Thus, this result is also usable in practice.
It was for instance proved in \cite[p. 402]{DO18} that \ref{estim BVolt} is satisfied if $P$ is compact.
However, we emphasize that the perturbation is only assumed to be bounded in Proposition \ref{thm DO18++} (it is important because in \eqref{syst} the perturbation is not compact).
The condition \ref{estim BVolt} is an integrated version of \ref{hyp ii prime}.
It is more general but it has to be checked for any time $T$.
\end{remark}

The proof of Proposition \ref{thm DO18++} relies on some ideas of \cite{NRL86} and an estimate that can be found for instance in \cite{DO18}.
More precisely, it is based on the two following results:

\begin{lemma}
For every $f \in C^1([0,+\infty);H)$ and $t \geq 0$,
\begin{equation}\label{int in dom}
\int_0^t S_{A_1}(t-s)^*f(s) \,ds \in \dom{A_1^*}.
\end{equation}
Moreover, for every $T>0$, there exists $C>0$ such that, for every $f \in C^1([0,T];H)$,
\begin{equation}\label{key estim}
\int_0^T \norm{B^* \int_0^t S_{A_1}(t-s)^*f(s) \,ds}_{U}^2 \, dt
\leq C \norm{f}_{L^2(0,T;H)}^2.
\end{equation}
\end{lemma}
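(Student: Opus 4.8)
The plan is to handle the two assertions separately: \eqref{int in dom} is a classical regularity fact, and \eqref{key estim} is the substantial point. Throughout I use that $B$ is admissible for $A_1$ (part of the standing framework) and that $A_1^*$ generates the adjoint $C_0$-semigroup $S_{A_1}(\cdot)^*$ on the Hilbert space $H$.

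For \eqref{int in dom}, set $w(t)=\int_0^t S_{A_1}(t-s)^*f(s)\,ds=\int_0^t S_{A_1}(\sigma)^*f(t-\sigma)\,d\sigma$. First I would check that $w$ is right-differentiable with right-derivative $S_{A_1}(t)^*f(0)+\int_0^t S_{A_1}(\sigma)^*f'(t-\sigma)\,d\sigma$; this uses only $f\in C^1$, the local boundedness of $\|S_{A_1}(\sigma)^*\|$, and the uniform continuity of $f'$ on compact intervals. Since this right-derivative is continuous in $t$, $w\in C^1$. Then, from the semigroup identity $w(t+h)=S_{A_1}(h)^*w(t)+\int_t^{t+h}S_{A_1}(t+h-s)^*f(s)\,ds$, one obtains $\lim_{h\to0^+}\tfrac1h\bigl(S_{A_1}(h)^*w(t)-w(t)\bigr)=w'(t)-f(t)$, whence $w(t)\in\dom{A_1^*}$ with $A_1^*w(t)=w'(t)-f(t)$; equivalently, $w$ is the classical solution of $w'=A_1^*w+f$, $w(0)=0$ (see e.g. \cite{EN00}). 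In particular $B^*w(t)$ is well-defined since $B^*\in\lin{\dom{A_1^*},U}$, so the left-hand side of \eqref{key estim} makes sense.

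For \eqref{key estim} the strategy is to identify $B^*w(t)$ with an explicit expression built from the admissibility-extended output map and to bound that expression by duality. Let $\Psi_T\in\lin{H,L^2(0,T;U)}$ be the bounded extension of $z\in\dom{A_1^*}\mapsto\bigl(r\mapsto B^*S_{A_1}(r)^*z\bigr)$, which exists precisely because $B$ is admissible. For $z\in H$, $\tau\le T$, $g\in L^2(0,\tau;U)$ one has the adjoint relation $\langle\Psi_\tau z,g\rangle_{L^2(0,\tau;U)}=\bigl\langle z,\int_0^\tau S_{A_1}(r)Bg(r)\,dr\bigr\rangle_H$, the integral again lying in $H$ by admissibility, with $\bigl\|\int_0^\tau S_{A_1}(r)Bg(r)\,dr\bigr\|_H\le C\|g\|_{L^2(0,\tau;U)}$ for a constant uniform in $\tau\le T$. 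Define $\tilde w(t)=\int_0^t(\Psi_Tf(s))(t-s)\,ds$; since $s\mapsto\Psi_Tf(s)$ is continuous into $L^2(0,T;U)$, this is a well-defined element of $U$ for a.e. $t$, and testing against $v\in L^2(0,T;U)$ (smooth first, then by density), Fubini and the adjoint relation give $\langle\tilde w,v\rangle_{L^2(0,T;U)}=\int_0^T\bigl\langle f(s),\int_s^T S_{A_1}(t-s)Bv(t)\,dt\bigr\rangle_H\,ds$, hence $\|\tilde w\|_{L^2(0,T;U)}\le\sqrt{TC}\,\|f\|_{L^2(0,T;H)}$.

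It then remains to prove $B^*w(t)=\tilde w(t)$ for a.e. $t$, and this is the crux. If $f$ takes values in $\dom{A_1^*}$, then $S_{A_1}(t-s)^*f(s)\in\dom{A_1^*}$ for every $s$ and, since $s\mapsto A_1^*S_{A_1}(t-s)^*f(s)=S_{A_1}(t-s)^*A_1^*f(s)$ is continuous, the integral defining $w(t)$ converges in $\dom{A_1^*}$; hence $B^*w(t)=\int_0^t B^*S_{A_1}(t-s)^*f(s)\,ds=\tilde w(t)$ genuinely. For general $f\in C^1([0,T];H)$ I would approximate by the Yosida-type regularizations $f_\lambda:=\lambda(\lambda-A_1^*)^{-1}f\in C^1([0,T];\dom{A_1^*})$, which converge to $f$ in $C^1([0,T];H)$ as $\lambda\to+\infty$ (uniformly, since $\{f(t)\}$ and $\{f'(t)\}$ are compact in $H$). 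Both $f\mapsto\tilde w$ and $f\mapsto B^*w(\cdot)$ are continuous for the $C^1$-topology — the former because it factors through $L^2(0,T;H)$, the latter because an integration by parts transferring the $s$-integral onto the primitive $\int_\sigma^t S_{A_1}(r)^*\,dr$ (which maps $H$ into $\dom{A_1^*}$, with $\|B^*\int_\sigma^t S_{A_1}(r)^*z\,dr\|_U\le C\sqrt{t-\sigma}\,\|z\|_H$ by admissibility) bounds $\int_0^T\|B^*w(t)\|_U^2\,dt$ by $\|f\|_{C^1([0,T];H)}^2$. Passing to the limit gives $B^*w=\tilde w$, and \eqref{key estim} follows from the bound on $\tilde w$ above. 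The main obstacle is exactly this interchange of $B^*$ with the time integral, i.e. justifying $B^*\int_0^t S_{A_1}(t-s)^*f(s)\,ds=\int_0^t B^*S_{A_1}(t-s)^*f(s)\,ds$ in the admissibility-extended sense; everything else is routine, and a fully detailed argument for this kind of estimate can be found in \cite[pp. 401--402]{DO18}.
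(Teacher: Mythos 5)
The paper does not actually prove this lemma: immediately after the statement it only remarks that \eqref{key estim} follows from admissibility and points to \cite[Appendix~A]{DO18}. Your proposal therefore supplies a self-contained argument, and I believe it is correct. The proof of \eqref{int in dom} is the standard ``classical solution of $w'=A_1^*w+f$'' computation from semigroup theory and is fine. For \eqref{key estim} the real issue, which you identify clearly, is that for general $f\in C^1([0,T];H)$ one cannot simply write $B^*w(t)=\int_0^t B^*S_{A_1}(t-s)^*f(s)\,ds$ pointwise. Your resolution is sound: (a) define $\tilde w$ via the admissibility extension $\Psi_T$ and obtain $\|\tilde w\|_{L^2(0,T;U)}\leq C\|f\|_{L^2(0,T;H)}$ by duality with the input map; (b) verify $B^*w=\tilde w$ when $f$ is $\dom{A_1^*}$-valued, where the exchange of $B^*$ and the $s$-integral is justified by graph-norm convergence; (c) pass to general $f$ by Yosida regularization $f_\lambda=\lambda(\lambda-A_1^*)^{-1}f$, using that strong convergence of $\lambda(\lambda-A_1^*)^{-1}$ is uniform on the compact ranges of $f$ and $f'$, together with continuity of both $f\mapsto\tilde w$ (via the $L^2$ bound) and $f\mapsto B^*w$ (via the integration-by-parts rewriting onto the primitive $\int_0^\tau S_{A_1}(r)^*\,dr$, which maps $H$ into $\dom{A_1^*}$ with a $\sqrt{\tau}$-bound by admissibility). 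The two nontrivial ingredients — the $C^1$-density of $\dom{A_1^*}$-valued functions and the $C^1$-continuity of $f\mapsto B^*w$ via integration by parts — are both argued correctly. A few steps are stated tersely (the a.e.~well-posedness of $\tilde w(t)$ should be justified by a Fubini argument on $K(s,r)=(\Psi_Tf(s))(r)\in L^2((0,T)^2;U)$, and the duality bound should first be tested against $L^\infty$-valued $v$ since a priori $\tilde w$ is only known to be $L^1$), but these gaps are routine and the structure of the argument is complete and valid.
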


The estimate \eqref{key estim} is a consequence of the admissibility of $B$ for $A_1$.
For a proof we refer for instance to \cite[Appendix A]{DO18}.
The second result we shall need is the following:

\begin{lemma}\label{lem NRL86}
For every $T>0$, there exists $C>0$ such that, for every $z^0 \in H$,
\begin{equation}\label{estim NRL86}
\int_0^T \norm{S_{A_1}(t)^*z^0-S_{A_2}(t)^*z^0}_{H}^2 \, dt \leq C \int_0^T \norm{\int_0^t S_{A_1}(t-s)^*P^*S_{A_1}(s)^*z^0 \,ds}_{H}^2 \, dt.
\end{equation}
\end{lemma}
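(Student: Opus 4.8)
The plan is to reduce the statement to the elementary theory of second-kind Volterra equations by exploiting the perturbation formula relating $S_{A_2}$ to $S_{A_1}$. Since $A_2=A_1+P$ with $P\in\lin{H}$, the variation of parameters (Dyson--Phillips) formula (see e.g. \cite[Section III.1]{EN00}) gives
$$S_{A_2}(t)=S_{A_1}(t)+\int_0^t S_{A_2}(t-s)PS_{A_1}(s)\,ds,\quad\forall t\ge 0,$$
and, taking adjoints and performing the change of variable $s\mapsto t-s$ in the resulting integral, one obtains, for every $z^0\in H$,
$$S_{A_2}(t)^*z^0=S_{A_1}(t)^*z^0+\int_0^t S_{A_1}(t-s)^*P^*S_{A_2}(s)^*z^0\,ds,\quad\forall t\ge 0.$$

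Next I would set $w(t)=S_{A_1}(t)^*z^0-S_{A_2}(t)^*z^0$ and let $v(t)=\int_0^t S_{A_1}(t-s)^*P^*S_{A_1}(s)^*z^0\,ds$ be the ``first iterate'', which is precisely the quantity appearing under the norm on the right-hand side of \eqref{estim NRL86}. Substituting $S_{A_2}(s)^*z^0=S_{A_1}(s)^*z^0-w(s)$ into the integral term of the identity above and rearranging, $w$ solves the second-kind Volterra equation
$$w(t)=-v(t)+\int_0^t S_{A_1}(t-s)^*P^*w(s)\,ds,\quad\forall t\in[0,T].$$
The kernel operator $(s,t)\mapsto S_{A_1}(t-s)^*P^*$ is bounded on $\{0\le s\le t\le T\}$ by $M\norm{P}$, where $M=\sup_{\tau\in[0,T]}\norm{S_{A_1}(\tau)}<+\infty$. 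Hence the associated Volterra operator on $L^2(0,T;H)$ is quasinilpotent, so $\Id$ minus it is boundedly invertible there and $\norm{w}_{L^2(0,T;H)}\le C'\norm{v}_{L^2(0,T;H)}$, with $C'$ depending only on $T$, $M$ and $\norm{P}$. Equivalently, one may simply apply Gronwall's lemma to $\norm{w(t)}_H\le\norm{v(t)}_H+M\norm{P}\int_0^t\norm{w(s)}_H\,ds$ to get a pointwise bound $\norm{w(t)}_H\le\norm{v(t)}_H+C''\int_0^T\norm{v(s)}_H\,ds$, then square, integrate over $(0,T)$, and use the Cauchy--Schwarz inequality on the last term. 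Either route yields \eqref{estim NRL86}.

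The only delicate point — and the main obstacle, though a mild one — is the rigorous bookkeeping for the adjoint of the operator-valued convolution: one has to check that $s\mapsto S_{A_2}(t-s)PS_{A_1}(s)x$ is Bochner integrable for each fixed $t$ and $x\in H$, that taking adjoints commutes with this integral (which follows from strong continuity of the integrands together with $\ps{\int\cdot}{y}{H}=\int\ps{\cdot}{y}{H}$), and that the variation of parameters formula, initially written on $\dom{A_1}$, extends to all of $H$ by density and continuity. Once these standard facts are in place the argument is entirely elementary; in particular it requires no regularity of $z^0$, consistently with the statement being for every $z^0\in H$.
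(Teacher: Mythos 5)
Your proposal is correct and follows essentially the same route as the paper's proof: both reduce to the second-kind Volterra equation $(\Id-V)w=-v$ obtained from the variation-of-constants formula for the boundedly perturbed semigroup, and then invoke bounded invertibility of $\Id-V$ on $L^2(0,T;H)$ (the paper cites Hochstadt, you argue via quasinilpotency or, equivalently, Gronwall). The only cosmetic difference is that the paper applies the perturbation formula directly to the adjoint generator $A_2^*=A_1^*+P^*$, whereas you take adjoints of the formula for $S_{A_2}$ and change variables, which requires the small verification of Bochner-integral/adjoint interchange that you correctly flag.
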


The proof of this second lemma is included at the end of the proof of \cite[Lemma 3]{NRL86} but let us briefly recall it for the sake of completeness:

\begin{proof}[Proof of Lemma \ref{lem NRL86}]
Let $V \in \lin{L^2(0,T;H)}$ be the bounded linear operator defined for every $y \in L^2(0,T;H)$ by
$$Vy(t)=\int_0^t K(t,s)y(s) \, ds, \quad t \in (0,T),$$
where the kernel is $K(t,s)=S_{A_1}(t-s)^*P^*$.
Since $K \in L^{\infty}((0,T)\times(0,T);\lin{H})$, the operator $V$ is well-defined and $\Id-V$ is invertible (see e.g. \cite[Theorem 2.5]{Hoc73}).
Therefore, its inverse is bounded by the closed graph theorem, meaning that there exists $C>0$ such that, for every $y \in L^2(0,T;H)$,
\begin{equation}\label{volt inv bd}
\norm{y}_{L^2(0,T;H)} \leq C \norm{(\Id-V)y}_{L^2(0,T;H)}.
\end{equation}
Let us now recall the integral equation satisfied by semigroups of boundedly perturbed operators (see e.g. \cite[Corollary III.1.7]{EN00}), valid for every $z^0 \in H$ and $t \geq 0$:
$$
S_{A_2}(t)^*z^0=
S_{A_1}(t)^*z^0
+\int_0^t S_{A_1}(t-s)^*P^*S_{A_2}(s)^*z^0 \,ds.
$$
Thus, we see that $y(t)=S_{A_1}(t)^*z^0-S_{A_2}(t)^*z^0$ is the solution to the following Volterra integral equation in $L^2(0,T;H)$:
\begin{equation}\label{volterra}
(\Id-V)y(t)=-\int_0^t S_{A_1}(t-s)^*P^*S_{A_1}(s)^*z^0 \,ds, \quad t \in (0,T),
\end{equation}
and the desired estimate \eqref{estim NRL86} then follows from \eqref{volt inv bd}.
\end{proof}

We are now ready to prove Proposition \ref{thm DO18++}.

\begin{proof}[Proof of Proposition \ref{thm DO18++}]
Let $T>0$ be fixed.
We will show that there exists $C>0$ such that, for every $z^0 \in H$,
\begin{equation}\label{estim impmaps}
\norm{(\Phi_1(T)^*-\Phi_2(T)^*) z^0}_{L^2(0,+\infty;U)} \leq C \norm{F z^0}_{\widetilde{H}}.
\end{equation}
Since $F$ is assumed to be compact, this will clearly implies that $\Phi_1(T)^*-\Phi_2(T)^*$ is compact as well.
First of all, note that we only have to prove \eqref{estim impmaps} for $z^0 \in \dom{A_1^*}$ since this set is dense in $H$ and all the operators involved in \eqref{estim impmaps} are actually continuous operators on $H$.
Besides, when $z^0 \in \dom{A_1^*}=\dom{A_2^*}$, we have the more explicit expression $(\Phi_1(T)^*-\Phi_2(T)^*) z^0(t)=B^*S_{A_1}(T-t)^*z^0-B^*S_{A_2}(T-t)^*z^0$ for a.e. $t \in (0,T)$.
The starting point to estimate this difference is again the Volterra integral equation \eqref{volterra}.
Using \eqref{int in dom} we see that each term in \eqref{volterra} actually belongs to $\dom{A_1^*}$ if $z^0 \in \dom{A_1^*}=\dom{A_2^*}$.
Therefore, we can apply $B^*$ to obtain the following identity:
\begin{multline*}
B^*S_{A_1}(t)^*z^0-B^*S_{A_2}(t)^*z^0
=
-B^*\int_0^t S_{A_1}(t-s)^*P^*S_{A_1}(s)^*z^0 \,ds
\\
+B^*\int_0^t S_{A_1}(t-s)^*P^*\left(S_{A_1}(s)^*z^0-S_{A_2}(s)^*z^0\right) \,ds.
\end{multline*}
Using now the estimate \eqref{key estim} and then \eqref{estim NRL86} on the second term of the right-hand side, we obtain
\begin{multline*}
\int_0^T \norm{B^*S_{A_1}(t)^*z^0-B^*S_{A_2}(t)^*z^0}_{U}^2 \, dt
\leq 
C\left(\int_0^T \norm{B^*\int_0^t S_{A_1}(t-s)^*P^*S_{A_1}(s)^*z^0 \,ds}_{U}^2 \, dt\right.
\\
+\left.\int_0^T \norm{\int_0^t S_{A_1}(t-s)^*P^*S_{A_1}(s)^*z^0 \,ds}_{H}^2 \, dt\right).
\end{multline*}
Using the assumption \ref{estim BVolt} this establishes \eqref{estim impmaps} for every $z^0 \in\dom{A_1^*}$.
\end{proof}

Let us now conclude this part of the appendix with the proof of Lemma \ref{lem small times}, which in fact provides sufficient conditions in small time to guarantee that the assumption \ref{estim BVolt} of Proposition \ref{thm DO18++} is satisfied.
The proof is essentially a use of the basic functional equation of semigroups.

\begin{proof}[Proof of Lemma \ref{lem small times}]
~
\begin{enumerate}[1)]
\item
By assumption, there exist $C>0$ and $\delta \in (0,\epsilon)$ such that, for every $z^0 \in \dom{A_1^*}$,
\begin{equation}\label{hyp mixed}
\left\{\begin{array}{c}
\ds \norm{V\tilde{z}({\delta})}_H \leq C \norm{G(\delta)z^0}_{\widehat{H}}, \\
\ds \int_0^{\delta} \norm{B^*V\tilde{z}(t)}^2_U \, dt
+\int_0^{\delta} \norm{V\tilde{z}(t)}^2_H \, dt \leq C \int_0^{\delta} \norm{G(t)z^0}_{\widehat{H}}^2 \, dt,
\end{array}\right.
\end{equation}
where, by abuse of notation, $G \in \mathscr{L}^2(0,\epsilon;\lin{H,\widehat{H}})$ in \eqref{hyp mixed} denotes in fact a representative of the equivalence class $G \in L^2(0,\epsilon;\lin{H,\widehat{H}})$ (so that $\norm{G(t)}_{\lin{H,\widehat{H}}}<+\infty$ for every $t \in (0,\epsilon)$, in particular for $t=\delta$) with $G(\delta)$ and $G(t)$ compact for a.e. $t \in (0,\delta)$.
Note in particular that the right-hand side in the second estimate define a compact operator from $H$ into $L^2(0,\delta;\widehat{H})$ by Lebesgue's dominated convergence theorem.
We will show that \eqref{hyp mixed} is enough to imply \ref{estim BVolt} of Proposition \ref{thm DO18++}.
In what follows, $C>0$ denotes a positive constant that may change from line to line but that remains independent of $z^0$.

\item
Let now $T>0$ be fixed.
Let $k \in \N$ be such that $k\delta \leq T \leq (k+1)\delta$.
We have
$$
\int_0^T \norm{B^*V\tilde{z}(t)}_U^2 \, dt
\leq
\sum_{j=0}^k 
\int_{j\delta}^{(j+1)\delta} \norm{B^*\int_0^t S_{A_1}(s)^*P^*S_{A_1}(t-s)^*z^0 \, ds}_U^2 \, dt.
$$
The change of variable $\tau=t-j\delta$ gives
$$
\int_0^T \norm{B^*V\tilde{z}(t)}_U^2 \, dt
\leq
\sum_{j=0}^k 
\int_0^{\delta} \norm{B^*\int_0^{\tau+j\delta} S_{A_1}(s)^*P^*S_{A_1}(\tau+j\delta-s)^*z^0 \, ds}_U^2 \, d\tau.
$$
Thus, breaking the integral into two parts, we have
\begin{multline*}
\int_0^T \norm{B^*V\tilde{z}(t)}_U^2 \, dt
\leq
\sum_{j=0}^k 
\left(2\int_0^{\delta} \norm{B^*\int_0^{\tau} S_{A_1}(s)^*P^*S_{A_1}(\tau-s)^*S_{A_1}(j\delta)^*z^0 \, ds}_U^2 \, d\tau\right.
\\
\left.+2\int_0^{\delta} \norm{B^*\int_{\tau}^{\tau+j\delta} S_{A_1}(s)^*P^*S_{A_1}(\tau+j\delta-s)^*z^0 \, ds}_U^2 \, d\tau\right).
\end{multline*}
The first integral is estimated thanks to the second inequality in \eqref{hyp mixed}:
$$
\int_0^{\delta} \norm{B^*\int_0^{\tau} S_{A_1}(s)^*P^*S_{A_1}(\tau-s)^*S_{A_1}(j\delta)^*z^0 \, ds}_U^2 \, d\tau
\leq
C \int_0^\delta \norm{G(t)S_{A_1}(j\delta)^*z^0}_{\widehat{H}}^2 \, dt.
$$
For the second integral, we perform the change of variable $\sigma=s-\tau$ and then use the admissibility of $B$ to obtain
\begin{multline*}
\int_0^{\delta} \norm{B^*\int_{\tau}^{\tau+j\delta} S_{A_1}(s)^*P^*S_{A_1}(\tau+j\delta-s)^*z^0 \, ds}_U^2 \, d\tau
\\
= \int_0^{\delta} \norm{B^* S_{A_1}(\tau)^*\int_0^{j\delta} S_{A_1}(\sigma)^*P^*S_{A_1}(j\delta-\sigma)^*z^0 \, d\sigma}_U^2 \, d\tau
\leq C\norm{V\tilde{z}(j\delta)}_H^2.
\end{multline*}
Combining both estimates, we have thus obtained
$$
\int_0^T \norm{B^*V\tilde{z}(t)}_U^2 \, dt
\leq C
\sum_{j=0}^k 
\left(\int_0^\delta \norm{G(t)S_{A_1}(j\delta)^*z^0}_{\widehat{H}}^2 \, dt
+\norm{V\tilde{z}(j\delta)}_H^2 \right).
$$
Note that all the previous computations are also valid for $B=\Id$ since we only used the second inequality in \eqref{hyp mixed} and the admissibility of $B$.
Therefore, we have
$$
\int_0^T \norm{B^*V\tilde{z}(t)}_U^2 \, dt
+\int_0^T \norm{V\tilde{z}(t)}_H^2 \, dt
\leq C
\sum_{j=0}^k 
\left(\int_0^\delta \norm{G(t)S_{A_1}(j\delta)^*z^0}_{\widehat{H}}^2 \, dt
+\norm{V\tilde{z}(j\delta)}_H^2 \right).
$$

\item
Let us now estimate $V\tilde{z}(j\delta)$.
We have
$$
V\tilde{z}(j\delta)
=\sum_{i=0}^{j-1} \int_{i \delta}^{(i+1)\delta} S_{A_1}(j\delta-s)^*P^*S_{A_1}(s)^*z^0 \, ds.
$$
Doing the change of variables $\sigma=s-i\delta$ we obtain
$$
V\tilde{z}(j\delta) =\sum_{i=0}^{j-1} S_{A_1}(j\delta-(i+1)\delta)^*\int_0^{\delta} S_{A_1}(\delta-\sigma)^*P^*S_{A_1}(\sigma)^*S_{A_1}(i\delta)^*z^0 \, d\sigma.
$$
Using now the first estimate in \eqref{hyp mixed}, it follows that
$$
\norm{V\tilde{z}(j\delta)}_H \leq C \sum_{i=0}^{j-1} \norm{G(\delta)S_{A_1}(i\delta)^*z^0}_{\widehat{H}}.
$$

\end{enumerate}

\end{proof}

\section{Removal of the coupling terms where the speeds agree}\label{app pert diag}

The goal of this appendix is to give a proof of Lemma \ref{prop hyp Rus78}.
It is essentially an appropriate change of variable.
First of all, it is convenient to introduce the following notion (see also \cite{Bru70}):

\begin{definition}
Let $M,\widetilde{M} \in L^{\infty}(0,1)^{n \times n}$.
We say that the systems $(A_{\widetilde{M}},B)$ and $(A_M,B)$ are equivalent, and we write
$$(A_{\widetilde{M}},B) \sim (A_M,B),$$
if there exist two invertible linear transformations $L \in \lin{L^2(0,1)^n}$ and $\Gamma \in \R^{m \times m}$ such that, for every $y^0 \in L^2(0,1)^n$ and $u \in L^2(0,+\infty)^m$, if $y \in C^0([0,+\infty);L^2(0,1)^n)$ denotes the solution to $(A_M,B)$ with initial data $y^0$ and control $u$, then $\widetilde{y}=Ly \in C^0([0,+\infty);L^2(0,1)^n)$ is the solution to $(A_{\widetilde{M}},B)$ with initial data $\widetilde{y}^0=Ly^0$ and control $\widetilde{u}=\Gamma u$.
\end{definition}

It is not difficult to check that $\sim$ is an equivalence relation and that, if $(A_{\widetilde{M}},B) \sim (A_M,B)$, then, for every $T>0$, the system $(A_{\widetilde{M}},B)$ is exactly controllable in time $T$ if, and only if, the system $(A_M,B)$ is exactly controllable in time $T$.

\begin{proof}[Proof of Lemma \ref{prop hyp Rus78}]
~
\begin{enumerate}[1)]
\item
The goal is to construct $\widetilde{M}$ such that \ref{mt zero} holds and $(A_{\widetilde{M}},B) \sim (A_M,B)$, so that \ref{equiv mt} will hold as well.
Thanks to \eqref{hyp speeds} and \eqref{hyp Rus78 eg}, we see that there exist $d \in \ens{1,\ldots,n}$, $n_1,\ldots, n_d \in \ens{1,\ldots,n}$ with $\sum_{k=1}^d n_k=n$ and $\lambda^1,\ldots, \lambda^d \in C^{0,1}([0,1])$ with
$$\lambda^1(x)<\cdots<\lambda^d(x), \quad \forall x \in [0,1],$$
such that, for every $x \in [0,1]$,
$$\Lambda(x)=\diag(\Lambda^1(x), \ldots,\Lambda^d(x)),$$
where
\begin{equation}\label{lambda block}
\Lambda^k(x)=\lambda^k(x)\Id_{\R^{n_k \times n_k}}.
\end{equation}


To establish the equivalence between two systems $(A_{\widetilde{M}},B)$ and $(A_M,B)$, we will use a transformation of the form
\begin{equation}\label{def yt}
\widetilde{y}(t,x)=\Psi(x)y(t,x),
\end{equation}
where $\Psi\in W^{1,\infty}(0,1)^{n\times n}$ is assumed to be block diagonal:
$$\Psi(x)=\diag (\Psi^1(x),\ldots,\Psi^d(x)),$$
where, for every $k \in \ens{1,\ldots,d}$, $\Psi^k\in W^{1,\infty}(0,1)^{n_k\times n_k}$ will be determined below.
First of all,  it is clear that the formula \eqref{def yt} is reversible if we impose that all the matrices $\Psi^1(x),\ldots, \Psi^d(x)$ are invertible for every $x \in [0,1]$, which also implies that $x \mapsto \Psi(x)^{-1} \in C^0([0,1])^{n \times n} \subset L^{\infty}(0,1)^{n \times n}$.
Let us now work formally to find what $\Psi^1(x),\ldots, \Psi^d(x)$ shall satisfy and what $\widetilde{M}$ is allowed to be. 
Let us first investigate the boundary conditions.
Let us denote by $d^+$ the index such that
$$\sum_{i=1}^{d^+}n_i=p.$$
At $x=1$, we see that we should have
$$
\widetilde{u}(t)=\widetilde{y}_-(t,1)=\left(\begin{array}{c}
\Psi^{d^++1}(1)y^{d^+ +1}(t,1)\\
\vdots\\
\Psi^{d}(1)y^{d}(t,1)
\end{array}\right)
=\Gamma u(t),
$$
with $\Gamma=\diag(\Psi^{d^+ +1}(1),\ldots, \Psi^d(1))$, and where $(y^{d^+ +1},\ldots,y^d)$ is a block notation to simply denote $y_-$.
On the other hand, at $x=0$, we see that if we impose the condition $\Psi(0)=\Id_{\R^{n\times n}}$, then
$$\widetilde{y}_+(t,0)-Q\widetilde{y}_-(t,0)=y_+(t,0)-Qy_-(t,0).$$
Let us finally look at the equations that $\Psi$ should satisfy.
Since $\Psi^k(x)$ and $\Lambda^k(x)$ commute for every $x \in [0,1]$ and $k \in \ens{1,\ldots,d}$ (see \eqref{lambda block}), so do $\Psi(x)$ and $\Lambda(x)$:
\begin{equation}\label{equ commutation}
\Psi(x)\Lambda(x)=\Lambda(x)\Psi(x), \quad \forall x \in [0,1].
\end{equation}
As a result, we have
\begin{multline*}
\pt{\widetilde{y}}(t,x)-\Lambda(x) \px{\widetilde{y}}(t,x)-\widetilde{M}(x)\widetilde{y}(t,x)
\\
=
\Psi(x)\left(\pt{y}(t,x)-\Lambda(x) \px{y}(t,x)-\Psi(x)^{-1}\left(\Lambda(x)\px{\Psi}(x)+\widetilde{M}(x)\Psi(x)\right)y(t,x)\right).
\end{multline*}
Thus, $\widetilde{y}$ is a solution to $(A_{\widetilde{M}},B)$ if $y$ is a solution to $(A_M,B)$ and $\widetilde{M}$ is defined by
\begin{equation}\label{def mt}
\widetilde{M}(x)=\left(\Psi(x)M(x)-\Lambda(x)\px{\Psi}(x)\right)\Psi(x)^{-1},
\quad \mbox{ a.e. } x \in (0,1).
\end{equation}
Note that $\widetilde{M} \in L^{\infty}(0,1)^{n \times n}$.
To summarize, we have $(A_{\widetilde{M}},B) \sim (A_M,B)$ with $\widetilde{M}$ given by \eqref{def mt} if there there exist matrices $\Psi^k \in W^{1,\infty}(0,1)^{n_k \times n_k}$ such that the following two properties hold for every $k\in\{1,\ldots,d\}$:
$$
\left\{\begin{array}{l}
\text{$\Psi^k(x)$ is invertible for every $x \in [0,1]$,} \\
\Psi^k(0)=\Id_{\R^{n_k\times n_k}}.
\end{array}\right.
$$

\item
Our previous discussion was only formal but everything can be established rigorously by coming back to the very definition of weak solution (see Definition \ref{def weak sol}) and using some density arguments.
More precisely, let $\widetilde{\varphi} \in C^1([0,T]\times[0,1])^n$ be fixed such that $\widetilde{\varphi}_+(\cdot,1)=0$ and $\widetilde{\varphi}_-(\cdot,0)=R^*\widetilde{\varphi}_+(\cdot,0)$.
Let $H(x)=\Psi(x)^*-(1-x)\Psi(0)^*-x\Psi(1)^*$.
Since $H \in H^1_0(0,1)^{n \times n}$, there exists a sequence $\theta^j \in C^{\infty}_c(0,1)^{n \times n}$ such that $\theta^j \rightarrow H$ in $H^1(0,1)^{n \times n}$ as $j \to +\infty$.
Let then $\varphi^j$ be defined by $\varphi^j(t,x)=\left(\theta^j(x)+(1-x)\Psi(0)^*+x\Psi(1)^*\right)\widetilde{\varphi}(t,x)$.
Clearly, $\varphi^j \in C^1([0,T]\times[0,1])^n$ with $\varphi^j_+(\cdot,1)=0$ and $\varphi^j_-(\cdot,0)=R^*\varphi^j_+(\cdot,0)$ (since in fact $\Psi(0)=\Id_{\R^{n \times n}}$).
Moreover, 
$$
\left\{\begin{array}{l}
\varphi^j(T,\cdot) \xrightarrow[j \to +\infty]{} \Psi^*\widetilde{\varphi}(T,\cdot)
\quad \mbox{ and } \quad
\varphi^j(0,\cdot) \xrightarrow[j \to +\infty]{} \Psi^*\widetilde{\varphi}(0,\cdot) \quad \mbox{ in } L^2(0,1)^n,  \\
\ds \varphi^j \xrightarrow[j \to +\infty]{} \Psi^*\widetilde{\varphi} \quad \mbox{ in } H^1((0,T) \times (0,1))^n, \\
\varphi^j_-(\cdot,1) \xrightarrow[j \to +\infty]{} \Gamma^*\widetilde{\varphi}_-(\cdot,1) \quad \mbox{ in } L^2(0,T)^m.
\end{array}\right.
$$
Plugging the test function $\varphi^j$ in \eqref{weak sol} and passing to the limit $j \to +\infty$, we obtain
\begin{multline*}
\int_0^1 y(T,x) \cdot \Psi(x)^*\widetilde{\varphi}(T,x) \, dx
-\int_0^1 y^0(x) \cdot \Psi(x)^*\widetilde{\varphi}(0,x) \, dx
\\
=\int_0^T \int_0^1 y(t,x) \cdot \left(\Psi(x)^*\pt{\widetilde{\varphi}}(t,x)
-\Lambda(x)\Psi(x)^*\px{\widetilde{\varphi}}(t,x)\right.
\\
+\left.\left(-\Lambda(x)\px{\Psi}(x)^*+\left(-\px{\Lambda}(x)+M(x)^*\right)\Psi(x)^*\right)\widetilde{\varphi}(t,x)\right)\, dx dt
\\
+\int_0^T u(t) \cdot \Lambda_{-}(1)\Gamma^*\widetilde{\varphi}_-(t,1) \, dt.
\end{multline*}
Using \eqref{equ commutation}, its differentiated version and the definition \eqref{def mt} of $\widetilde{M}$, we obtain
\begin{multline*}
\int_0^1 \widetilde{y}(T,x) \cdot \widetilde{\varphi}(T,x) \, dx
-\int_0^1 \widetilde{y}^0(x) \cdot \widetilde{\varphi}(0,x) \, dx
\\
=\int_0^T \int_0^1 \widetilde{y}(t,x) \cdot \left(\pt{\widetilde{\varphi}}(t,x)-\Lambda(x)\px{\widetilde{\varphi}}(t,x)+\left(-\px{\Lambda}(x)+\widetilde{M}(x)^*\right)\widetilde{\varphi}(t,x)\right)\, dx dt
\\
+\int_0^T \widetilde{u}(t) \cdot \Lambda_{-}(1)\widetilde{\varphi}_-(t,1) \, dt.
\end{multline*}
This show that $\widetilde{y}$ defined by \eqref{def yt} is indeed the weak solution of the system $(A_{\widetilde{M}},B)$.

\item
The final goal is now to design the matrices $\Psi^1,\ldots,\Psi^d$ such that the matrix $\widetilde{M}$ given by \eqref{def mt} satisfies the condition \ref{mt zero} of Lemma \ref{prop hyp Rus78}, namely:
\begin{equation}\label{mt zero bis}
\widetilde{M}^k(x)=\px{\Lambda^k}(x), \quad \mbox{ a.e. } x \in (0,1), \quad \forall k \in \ens{1,\ldots,d},
\end{equation}
where $\widetilde{M}^k \in L^{\infty}(0,1)^{n_k \times n_k}$ denotes the submatrix $(\widetilde{m}_{i,j})_{\sum_{\ell=1}^{k-1} n_\ell+1 \leq i,j \leq \sum_{\ell=1}^k n_\ell}$.
To this end, for every $k \in \ens{1,\ldots,d}$, we take $\Psi^k\in W^{1,\infty}(0,1)^{n_k\times n_k}$ to be the solution to the O.D.E.
$$
\left\{\begin{array}{l}
\ds
\px{\Psi^k}(x)
=\Psi^k(x)\Lambda^k(x)^{-1}M^k(x)-\Lambda^k(x)^{-1}\px{\Lambda^k}(x)\Psi^k(x)
, \quad x \in (0,1), \\
\Psi^k(0)
=\Id_{\R^{n_k \times n_k}}.
\end{array}\right.
$$
Since $\Psi^k(x)$ commute with $\Lambda^k(x)^{-1}=\frac{1}{\lambda^k(x)}\Id_{\R^{n_k \times n_k}}$, we see that this implies that $\widetilde{M}$ given by \eqref{def mt} satisfies \eqref{mt zero bis}.
Moreover, it is clear that $\Psi^{k}(x)$ is invertible for every $x \in [0,1]$.
This completes the proof of Lemma \ref{prop hyp Rus78}.
\end{enumerate}
\end{proof}

\bibliographystyle{amsalpha}
\bibliography{biblio}

\end{document}